\definecolor{cadmiumgreen}{rgb}{0.0, 0.42, 0.24}
\numberwithin{equation}{section}
\newtheorem{theorem}{Theorem}[section]
\theoremstyle{plain}
\newtheorem{lemma}[theorem]{Lemma}
\theoremstyle{plain}
\newtheorem{proposition}[theorem]{Proposition}
\theoremstyle{plain}
\newtheorem{corollary}[theorem]{Corollary}
\newtheorem{definition}[theorem]{Definition}
\theoremstyle{definition}
\newtheorem{remark}[theorem]{Remark}
\newtheorem{example}[theorem]{Example}
\renewcommand{\l}{\lambda}
\newcommand{\e}{\varepsilon}
\newcommand{\N}{{\mathbb N}}
\newcommand{\Z}{{\mathbb Z}}
\newcommand{\R}{{\mathbb R}}
\newcommand{\eps}{\varepsilon}
\newcommand{\beq}{\begin{equation}}
\newcommand{\eeq}{\end{equation}}
\renewcommand{\le}{\leqslant}
\renewcommand{\ge}{\geqslant}
\newcommand{\restr}[2]{\left.#1\right|_{#2}}
\newenvironment{enumroman}{\begin{enumerate}

}{\end{enumerate}}
\title[Set-valued eigenvalue problems]{Eigenvalue problems for Fredholm operators with set-valued perturbations}
\author[P.\ Benevieri, A.\ Iannizzotto]{Pierluigi Benevieri, Antonio Iannizzotto}
\address[P.\ Benevieri]{Instituto de Matem\'atica e Estat\'istica
\newline\indent
Universidade de S\~ao Paulo
\newline\indent
rua do Mat\~ao 1010, 05508-090 S\~ao Paulo, Brasil}
\email{pluigi@ime.usp.br}
\address[A.\ Iannizzotto]{Department of Mathematics and Computer Science
\newline\indent
University of Cagliari
\newline\indent
Viale L. Merello 92, 09123 Cagliari, Italy}
\email{antonio.iannizzotto@unica.it}
\subjclass[2010]{47J10, 47H11, 58C06.}
\keywords{Fredholm operators, Eigenvalue problems, Set-valued maps.}
\begin{document}

\begin{abstract}
By means of a suitable degree theory, we prove persistence of eigenvalues and eigenvectors for set-valued perturbations of a Fredholm linear operator. As a consequence, we prove existence of a bifurcation point for a non-linear inclusion problem in abstract Banach spaces. Finally, we provide applications to differential inclusions.
\end{abstract}

\maketitle

\begin{center}
Version of \today\
\end{center}

\section{Introduction}\label{sec1}

\noindent
The present paper is devoted to the study of the following eigenvalue problem with a set-valued perturbation:
\beq\label{pev}
\begin{cases}
Lx-\lambda Cx+\eps\phi(x)\ni 0 \\
x\in\partial\Omega.
\end{cases}
\eeq
Here $L:E\to F$ is a Fredholm linear operator of index 0 between two real Banach spaces $E$ and $F$ s.t. ${\rm ker}\,L\neq 0$, $C$ is another bounded linear operator, $\Omega$ is an open subset of $E$ not necessarily bounded and containing 0, $\phi:\overline\Omega\to 2^F$ is a locally compact, u.s.c.\ set-valued map of $CJ$-type (see Section \ref{sec4} for a precise definition), and $\lambda,\eps\in\R$ are parameters.
\vskip2pt
\noindent
Problem \eqref{pev} can be seen as a set-valued perturbation of a linear eigenvalue problem (which is retrieved for $\eps=0$):
\beq\label{ev}
\begin{cases}
Lx-\lambda Cx= 0 \\
x\in\partial\Omega.
\end{cases}
\eeq
So, it is reasonable to expect that, under suitable assumptions, solutions of \eqref{pev} appear in a neighborhood of the eigenpairs $(x,\lambda)$ of \eqref{ev}. In fact, we show that this is the case for the trivial eigenpairs $(x,0)$, provided ${\rm dim}({\rm ker}\,L)$ is odd, the set $\overline\Omega\cap{\rm ker}\,L$ is compact, and the following transversality condition holds:
\beq\label{trans}
{\rm im}\,L+ C({\rm ker}\,L)=F.
\eeq
More precisely, we denote $\mathcal{S}_0=\partial\Omega\cap{\rm ker}\,L$ the set of trivial solutions of \eqref{ev}. We prove that there exist a rectangle $\mathcal{R}=[-a,a]\times[-b,b]$ ($a,b>0$) and $c>0$ s.t.\ for all $\eps\in[-a,a]$ the set of real parameters $\lambda\in[-b,b]$ for which \eqref{pev} admits a nontrivial solution $x\in E$ with ${\rm dist}(x,\mathcal{S}_0)<c$ is nonempty and depends on $\eps$ by means of an u.s.c.\ set-valued map. Similarly, for all $\eps\in[-a,a]$ the set of vectors $x\in E$ with ${\rm dist}(x,\mathcal{S}_0)<c$ that solve \eqref{pev} for some $\lambda\in[-b,b]$ is nonempty and depends on $\eps$ by means of an u.s.c.\ set-valued map. Using such persistence results, we prove that $\mathcal{S}_0$ contains at least one bifurcation point, i.e., a trivial solution $x_0$ s.t.\ any neighborhood of $x_0$ in $E$ contains a nontrivial solution. 
\vskip2pt
\noindent
Our results are an extension of those of \cite{BCFP}, where the first author, with A.\ Calamai, M.\ Furi, and M.P.\ Pera, considered a \eqref{ev}-type eigenvalue problem perturbed by a single-valued nonlinear map. The origin of the study of this type of nonlinear eigenvalue problems goes back to a work of R.\ Chiappinelli \cite{C} in which the author investigates a {\em persistence} property of the eigenvalues and eigenvectors of the system
\beq\label{Chiappinelli}
\begin{cases}
Lx + \e N(x)= \l x \\
\|x\|=1,
\end{cases}
\eeq
where $L$ is a self-adjoint operator defined on a real Hilbert space $H$, $N \colon H \to H$ is a nonlinear continuous (single-valued) map, $\e$, $\l$ still are real parameters. Under the assumptions that $\l_0 \in \R$ is an isolated {\em simple} eigenvalue of $L$ and that $N$ is Lipschitz continuous, Chiappinelli proves that there exist two $H$-valued Lipschitz curves, $\e \mapsto x\sp1_\e$ and $\e \mapsto x\sp2_\e$, defined in a neighborhood $V$ of $0$ in $\R$, as well as two real Lipschitz functions, $\e \mapsto \l\sp1_\e$ and $\e \mapsto \l\sp2_\e$, s.t.\ for $i=1,2$ and $\e \in V$ one has
\[Lx\sp{i}_\e + \e N(x\sp{i}_\e) = \l\sp{i}_\e x\sp{i}_\e, \quad \|x\sp{i}_\e\|=1,\]
i.e., the triples $(x^i_\eps,\eps,\lambda^i_\eps)$ solve \eqref{Chiappinelli} for all $\eps\in V$. In particular, when $\e = 0$, these four functions satisfy $x\sp{i}_0=x\sp{i}$, $\l\sp{i}_0=\l_0$, where $x\sp1$ and $x\sp2$ are the two unit eigenvectors of $L$ corresponding to the simple eigenvalue~$\l_0$. After the result of Chiappinelli, in a series of papers \cite{CFP,CFP1,CFP2,CFP3} the above property of local persistence of the eigenvalues and eigenvectors was extended to the case in which the multiplicity of the eigenvalue $\l_0$ is bigger than one. In particular, in \cite{BCFP} persistence results are obtained in the more general framework of real Banach spaces.
\vskip2pt
\noindent
We proceed here in the general spirit of \cite{BCFP} in which the authors use a topological approach based on a concept of degree, developed in \cite{BCF1, BCF2}, for a class of noncompact (single-valued) perturbations of Fredholm maps of index zero between Banach spaces. On the other hand, introducing in our work a set-valued perturbation requires a more general degree theory for set-valued maps, which extends Brouwer's degree for nonlinear maps on $C^1$-manifolds. Such a degree theory has been introduced in \cite{OZZ} and redefined in \cite{BZ} by a precise notion of orientation for set-valued perturbations of nonlinear Fredholm maps between Banach spaces. The concept of orientation used in \cite{BZ} (and reproduced here) is a natural extension of a notion of  orientation for nonlinear Fredholm maps in  Banach spaces presented in \cite{BF,BF1} and on which is also based the approach in \cite{BCFP}. This orientation actually simplifies the method followed to define the degree in \cite{OZZ}, based on the so called concept of oriented Fredholm structure, introduced by Elworty and Tromba in \cite{ET,ET1} (where an orientation is constructed on the source and targets Banach spaces and manifolds).
\vskip2pt
\noindent
In order to help the reader, most of our paper (Sections \ref{sec2}-\ref{sec5}) is devoted to the construction of the orientation and degree for the set-valued perturbations of  Fredholm maps. Then, in Section \ref{sec6} we prove our persistence and bifurcation results. Finally, in Section \ref{sec7} we will provide some examples and applications of our abstract theorems, showing that our assumptions are satisfied in quite natural situations and may lead to new existence results for differential inclusions. Precisely, we will consider the following ordinary differential inclusion with Neumann boundary conditions and an integral constraint:
\beq\label{odi}
\begin{cases}
u''+u'-\lambda u+\eps\Phi(u)\ni 0 \ \text{in $[0,1]$} \\
u'(0)=u'(1)=0 \\
\|u\|_1=1.
\end{cases}
\eeq
Here $\Phi(u):[0,1]\to 2^\R$ is a set-valued map depending on $u$, to be chosen according to several requirements (three different examples will be presented). We shall prove that the transversality condition \eqref{trans} holds, and hence problem \eqref{odi} admits at least one bifurcation point.
\vskip2pt
\noindent
{\bf Notation:} Whenever $E$, $F$ are Banach spaces, we denote by $\mathcal{L}(E,F)$ the space of bounded linear operators from $E$ into $F$ (in particular, $\mathcal{L}(E)=\mathcal{L}(E,E)$). We shall use the term {\em operator} for linear functions, and {\em map} for nonlinear ones.

\section{A remark on orientation and transversality}\label{sec2}

\noindent
In this preliminary section we recall some facts regarding the classical notions of orientation and transversality in finite dimension. We assume that the reader is familiar with the notion of orientation for finite-dimensional Banach manifolds and spaces. Let $M$ be a real $C^1$-manifold, $F$ be a real vector space s.t.
\[{\rm dim}(M)={\rm dim}(F)<\infty.\]

\begin{definition}\label{transverse}
A subspace $F_1\subseteq F$ and a map $g\in C^1(M,F)$ are \emph{transverse} if for all $x\in M$
\[{\rm im}\,Dg(x)+F_1=F.\]
\end{definition}

\noindent
The map $g$ is backward orientation-preserving:

\begin{lemma}\label{backorient}
Let $M$, $F$ be oriented, and $F_1\subseteq F$, $g\in C^1(M,F)$ be transverse. Then, any orientation of $F_1$ induces an orientation of $M_1=g^{-1}(F_1)$.
\end{lemma}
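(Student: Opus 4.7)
The plan is to combine the preimage theorem with orientation transfer via short exact sequences, proceeding in three steps.

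First, I would show that $M_1$ is a $C^1$-submanifold of $M$. Let $\pi:F\to F/F_1$ denote the quotient projection. The transversality condition of Definition \ref{transverse} is equivalent to $\pi\circ Dg(x):T_xM\to F/F_1$ being surjective for every $x\in M$. Applying the submersion theorem to $\pi\circ g\in C^1(M,F/F_1)$ at every $x\in M_1=(\pi\circ g)^{-1}(0)$, I deduce that $M_1$ is a $C^1$-submanifold of $M$ with
\[T_xM_1={\rm ker}(\pi\circ Dg(x))=Dg(x)^{-1}(F_1),\qquad {\rm dim}\,M_1={\rm dim}\,F_1.\]

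Second, I would pointwise orient each tangent space $T_xM_1$. The chosen orientations of $F_1$ and $F$ induce a canonical orientation on the quotient $F/F_1$ via the standard orientation rule applied to the short exact sequence $0\to F_1\to F\to F/F_1\to 0$. For each $x\in M_1$, the short exact sequence
\[0\to T_xM_1\hookrightarrow T_xM\xrightarrow{\pi\circ Dg(x)} F/F_1\to 0\]
has two oriented outer terms ($T_xM$ and $F/F_1$), so the same orientation rule for short exact sequences of finite-dimensional vector spaces assigns a unique orientation to the inner term $T_xM_1$.

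Third, and this is the decisive point, I would verify that the assignment $x\mapsto(\text{orientation of }T_xM_1)$ is locally constant and hence defines a genuine orientation of the manifold $M_1$. Fixing $x_0\in M_1$ and working in a local chart of $M$ around $x_0$ that trivializes the tangent bundle along $M_1$, the map $x\mapsto \pi\circ Dg(x)$ is continuous and surjective, and the orientation produced by a short exact sequence depends continuously on its surjective term; hence the pointwise orientation is locally constant near $x_0$. The first two steps are routine appeals to the submersion theorem and to linear algebra; the main obstacle is precisely this final continuity argument, which is what guarantees that the pointwise construction of Step 2 glues into a well-defined orientation of the $C^1$-manifold $M_1$.
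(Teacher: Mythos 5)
Your argument is correct, and mathematically it is the same preimage-orientation construction as the paper's, only phrased in quotient language rather than in terms of direct complements. The paper picks a complement $E_0$ of $T_xM_1$ in $T_xM$, sets $F_0=Dg(x)(E_0)$, and uses the two decompositions $F=F_0\oplus F_1$ and $T_xM=E_0\oplus T_xM_1$ together with the isomorphism $\restr{Dg(x)}{E_0}:E_0\to F_0$ to pass the orientation of $F_1$ down to $T_xM_1$. You instead orient $F/F_1$ from the short exact sequence $0\to F_1\to F\to F/F_1\to 0$ and then orient $T_xM_1$ from $0\to T_xM_1\to T_xM\to F/F_1\to 0$, with the surjection $\pi\circ Dg(x)$ playing the role of $\restr{Dg(x)}{E_0}$. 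Under the canonical isomorphism $F_0\cong F/F_1$ the two recipes coincide (possibly up to a uniform sign depending on one's SES convention, which is harmless since the lemma only claims that some orientation is induced). Your version has the small advantage of not requiring a choice of complement, so the pointwise orientation is visibly independent of auxiliary choices; the paper handles this implicitly and delegates local constancy to \cite[p.\ 100]{GP}, where you sketch the continuity of the SES-orientation rule in the surjection. Both treatments leave that last step at the same level of detail, so there is no gap.
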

\begin{proof}
By regularity of $g$, $M_1$ is a $C^1$-submanifold of $M$ with
\[{\rm dim}(M_1)={\rm dim}(F_1).\]
Fix $x\in M_1$, and let $T_x(M)$, $T_x(M_1)$ be the tangent spaces to $M$, $M_1$, respectively, at $x$. Then we have
\[T_x(M_1)=(Dg(x))^{-1}(F_1).\]
Let $E_0$ be a direct complement to $T_x(M_1)$ in $T_x(M)$, and $F_0=Dg(x)(E_0)$. Then the restriction $\restr{Dg(x)}{E_0}\in\mathcal{L}(E_0,F_0)$ is an isomorphism and $F_0\oplus F_1=F$. Now let $F_1$ be oriented so that any two positively oriented bases of $F_0$, $F_1$ (in this order) form a positively oriented basis of $F$. Thus, we can orient $E_0$ so that $\restr{Dg(x)}{E_0}$ is orientation-preserving.
\vskip2pt
\noindent
Similarly, we can orient $T_x(M_1)$ so that any two positively oriented bases of $E_0$, $T_x(M_1)$ (in this order) form a positively oriented basis of $T_x(M)$. Then, this pointwise choice induces a global orientation on $M_1$ (see \cite[p.\ 100]{GP} for details).
\end{proof}

\noindent
By Lemma \ref{backorient} we have a natural way to orient $M_1$:

\begin{definition}\label{orpreim}
Let $M$, $F$, $F_1\subseteq F$ be oriented, and $g\in C^1(M,F)$ be transverse to $F_1$. The manifold $M_1=g^{-1}(F_1)$, with the orientation induced by that of $F_1$ is an \emph{oriented $g$-preimage} of $F_1$.
\end{definition}

\noindent
Now let $f\in C(M,F)$, and choose $y\in F$ s.t.\ $f^{-1}(y)\subset M$ is compact. Brouwer's degree for the triple $(f,M,y)$ is defined and denoted by
\[{\rm deg}_B(f,M,y)\in\Z.\]
For the definition and properties of Brouwer's degree (both on open sets and manifolds) we refer to \cite{L,N}. We only need to add the following reduction property:

\begin{proposition}\label{reduction}
Let $M$, $F$, $F_1\subset F$ be oriented, $g\in C^1(M,F)$ be transverse to $F_1$, $M_1$ be the oriented $g$-preimage of $F_1$. Moreover, let $f\in C(M,F)$, $y\in F_1$ be s.t.\ $f^{-1}(y)$ is compact and
\[(f-g)(M)\subseteq F_1.\]
Finally, let $f_1=\restr{f}{M_1}$. Then,
\[{\rm deg}_B(f,M,y)={\rm deg}_B(f_1,M_1,y).\]
\end{proposition}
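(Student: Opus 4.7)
I would reduce to the case when $f$ is $C^1$ and $y$ is a regular value of both $f$ and $f_1$, and then identify the two degree-defining sums of signs via a block-matrix factorization. The first step is the set-theoretic observation that $f^{-1}(y)\subseteq M_1$: if $f(x)=y$ then $g(x)=y-(f(x)-g(x))\in F_1$ (since $y\in F_1$ and $(f-g)(M)\subseteq F_1$), so $x\in g^{-1}(F_1)=M_1$. Thus $f^{-1}(y)=f_1^{-1}(y)$ is compact and $\mathrm{deg}_B(f_1,M_1,y)$ is well defined.

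To smooth while preserving the hypothesis $(f-g)(M)\subseteq F_1$, I would write $f=g+h$ with $h\colon M\to F_1$ continuous and approximate $h$ uniformly on a compact neighborhood of $f^{-1}(y)$ by some $\tilde h\in C^1(M,F_1)$. Setting $\tilde f=g+\tilde h$, one still has $(\tilde f-g)(M)\subseteq F_1$, and by the stability of Brouwer's degree under small uniform perturbations both degrees in the statement are unchanged if $f$ is replaced by $\tilde f$. Sard's theorem applied to $\tilde f_1\in C^1(M_1,F_1)$ then produces a regular value $y'\in F_1$ of $\tilde f_1$ arbitrarily close to $y$; by the matrix computation below, such a $y'$ is automatically a regular value of $\tilde f$ as well, and neither degree is affected when $y$ is replaced by $y'$.

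The heart of the argument is the pointwise identification of signs. Fix $x\in\tilde f^{-1}(y')$ and, as in the proof of Lemma \ref{backorient}, pick a complement $E_0$ of $T_xM_1$ in $T_xM$ and set $F_0=Dg(x)(E_0)$, so that $Dg(x)|_{E_0}\colon E_0\to F_0$ is an orientation-preserving isomorphism by the choice of orientations in Definition \ref{orpreim}. Since $\tilde h$ takes values in $F_1$, in the splittings $T_xM=E_0\oplus T_xM_1$ and $F=F_0\oplus F_1$ the differential has the block lower-triangular form
\[D\tilde f(x)=\begin{pmatrix}Dg(x)|_{E_0} & 0\\ D\tilde h(x)|_{E_0} & D\tilde f_1(x)\end{pmatrix},\]
whence $\det D\tilde f(x)=\det(Dg(x)|_{E_0})\cdot\det D\tilde f_1(x)$ with the first factor strictly positive. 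The signs of $D\tilde f(x)$ and $D\tilde f_1(x)$ therefore coincide at every regular preimage, and summing over $\tilde f^{-1}(y')=\tilde f_1^{-1}(y')$ gives $\mathrm{deg}_B(f,M,y)=\mathrm{deg}_B(f_1,M_1,y)$. The main care will go into the smoothing step: one must keep $\tilde h$ valued strictly in $F_1$ (rather than merely near it) so that the compatibility is preserved verbatim, and arrange that all relevant preimages stay in a fixed compact set during the deformations from $(f,y)$ to $(\tilde f,y')$. Once this routine bookkeeping is handled, the block-triangular factorization carries the non-trivial content of the proposition.
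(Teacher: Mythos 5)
Your argument is correct, and the key set-theoretic step ($f^{-1}(y)\subseteq M_1$, so the two preimages coincide) is exactly what is needed to make sense of the statement. Where you diverge from the paper is that the paper stops there: after noting that $f_1$ maps $M_1$ into $F_1$ and that $f_1^{-1}(y)=f^{-1}(y)$ is a compact subset of $M_1$, it orients $M_1$, $F_1$ as in Lemma~\ref{backorient} and invokes \cite[Lemma 4.2.3]{L} for the equality of degrees. You instead reprove that reduction lemma from scratch: smooth $h=f-g$ within $F_1$, pick a common regular value by Sard, and compare signs at each preimage via the block lower-triangular factorization of $D\tilde f(x)$ in the splittings $T_xM=E_0\oplus T_xM_1$, $F=F_0\oplus F_1$. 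That factorization is the right computation, and it also gives for free that a regular value of $\tilde f_1$ is one of $\tilde f$, since $Dg(x)|_{E_0}$ is always an isomorphism. What your route buys is self-containment and a transparent explanation of why the orientation conventions of Definition~\ref{orpreim} make the signs match (positivity of $\det(Dg(x)|_{E_0})$); what the paper's route buys is brevity and reuse of a standard result. The ``routine bookkeeping'' you flag is indeed the only place needing real care: the $C^1$ approximation $\tilde h$ must take values in the linear subspace $F_1$ itself (which is unproblematic since $F_1$ is finite-dimensional, so mollification or a partition-of-unity argument in a chart stays inside $F_1$), and the excision/homotopy arguments must confine all preimages to a fixed compact set while passing from $(f,y)$ to $(\tilde f,y')$. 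With that handled, your proof is a valid, more elementary alternative to the paper's citation.
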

\begin{proof}
First we note that for all $x\in M_1$
\[f(x)=g(x)+(f-g)(x)\in F_1,\]
so $f_1\in C(M_1,F_1)$. In particular, $f^{-1}_1(y)=f^{-1}(y)$ is a compact subset of $M_1$. We orient $M_1$ and $F_1$ as in Lemma \ref{backorient}, so we can define Brouwer's degree for the triple $(f_1,M_1,y)$. Now, the conclusion follows from \cite[Lemma 4.2.3]{L}.
\end{proof}

\section{Orientation for Fredholm maps}\label{sec3}

\noindent
In order to develop a degree theory, we need a precise notion of {\em orientability} for Fredholm operators and maps. The one we are going to recall here was introduced in \cite{BF,BF1}.
\vskip2pt
\noindent
Let $E$, $F$ be two (possibly, infinite-dimensional) real Banach spaces. We first recall a basic definition:

\begin{definition}\label{fredop}
A bounded linear operator $L\in \mathcal{L}(E,F)$ is a \emph{Fredholm operator} of index $k\in\Z$, if
\begin{enumroman}
\item\label{fredop1} ${\rm dim}({\rm ker}\,L),\,{\rm dim}({\rm coker}\,L)<\infty$;
\item\label{fredop2} ${\rm dim}({\rm ker}\,L)-{\rm dim}({\rm coker}\,L)=k$.
\end{enumroman}
The set of such operators is denoted $\Phi_k(E,F)$.
\end{definition}

\noindent
It is known that $\Phi_k(E,F)\subset\mathcal{L}(E,F)$ is open for all $k\in\Z$. We are mainly interested in $\Phi_0(E,F)$, the set of Fredholm operators of index $0$, also denoted $\Phi_0$-operators. The following construction leads to a notion of orientation for such operators:

\begin{definition}\label{corrector}
Let $L\in\Phi_0(E,F)$, $A\in\mathcal{L}(E,F)$. $A$ is a \emph{corrector} of $L$, if
\begin{enumroman}
\item\label{corrector1} ${\rm dim}({\rm im}\,A)<\infty$ (finite rank);
\item\label{corrector2} $L+A\in\mathcal{L}(E,F)$ is an isomorphism.
\end{enumroman}
The set of correctors of $L$ is denoted $\mathcal{C}(L)$.
\end{definition}

\noindent
Clearly, $\mathcal{C}(L)\neq\emptyset$ for all $L\in\Phi_0(E,F)$. Following \cite{BF}, we define an equivalence relation in $\mathcal{C}(L)$. Let $A,B\in\mathcal{C}(L)$, and set
\[T=(L+B)^{-1}(L+A), \quad  \quad K=I-T=(L+B)^{-1}(B-A).\]
By Definition \ref{corrector}, $T\in\mathcal{L}(E)$ is an automoprphism, and $K\in\mathcal{L}(E)$ has finite rank. Let $E_0\subseteq E$ be a non-trivial finite-dimensional subspace s.t.\ ${\rm im}\,K\subseteq E_0$, and set $T_0=\restr{T}{E_0}$. We note that $T_0\in\mathcal{L}(E_0)$ and is an automorphism as well. Indeed, $T_0$ is injective by injectivity of $T$, and for all $x\in E_0$ we have 
\[T_0(x)=x-K(x)\in E_0,\]
so $T_0$ is surjective as well (recall that ${\rm dim}(E_0)<\infty$). Thus, as soon as we fix a basis for $E_0$, the determinant of $T_0$ is well defined and denoted ${\rm det}\,T_0\in\R\setminus\{0\}$. A remarkable fact is that ${\rm det}\,T_0$ does not depend on the choice of $E_0$ (by choosing the same basis in $E_0$ both as the domain and as the codomain of $T_0$), so we can provide $T$ with a uniquely defined determinant by setting
\[{\rm det}\,T={\rm det}\,T_0.\]
The above notion of determinant for linear operators between (possibly) infinite dimensional spaces can be found in \cite{K}.

\begin{definition}\label{L-equiv}
Let $L\in\Phi_0(E,F)$. Two correctors $A,B\in\mathcal{C}(L)$ are \emph{$L$-equivalent}, if
\[{\rm det}\,\big((L+B)^{-1}(L+A)\big)>0.\]
\end{definition}

\noindent
It is easily seen that $L$-equivalence is actually an equivalence relation, splitting $\mathcal{C}(L)$ into two equivalence classes. Now we can define a notion of orientation for $\Phi_0$-operators:

\begin{definition}\label{fredorient}
Let $L\in\Phi_0(E,F)$:
\begin{enumroman}
\item\label{fredorient1} an \emph{orientation} of $L$ is any $L$-equivalence class $\alpha\subset\mathcal{C}(L)$, then the pair $(L,\alpha)$ is an \emph{oriented} $\Phi_0$-operator, moreover a corrector $A\in\mathcal{C}(L)$ is \emph{positive} for $(L,\alpha)$ if $A\in\alpha$, \emph{negative} if $A\in\mathcal{C}(L)\setminus\alpha$;
\item\label{fredorient2} if $L$ is an isomorphism, then $\alpha\subset\mathcal{C}(L)$ is the \emph{natural orientation} of $L$, if $0\in\alpha$, and in such case $(L,\alpha)$ is \emph{naturally oriented};
\item\label{fredorient3} if $(L,\alpha)$ is an oriented $\Phi_0$-operator, its \emph{sign} is defined as follows:
\[{\rm sign}(L,\alpha)=\begin{cases}
+1 & \text{if $(L,\alpha)$ is a naturally oriented isomorphism} \\
-1 & \text{if $(L,\alpha)$ is a non-naturally oriented isomorphism} \\
0 & \text{if $(L,\alpha)$ is not an isomorphism.} \\
\end{cases}\]
\end{enumroman}
\end{definition}

\noindent
Let $(L,\alpha)$ be an oriented $\Phi_0$-operator, $A\in\alpha$ be a positive corrector. Since the set of isomorphisms is open in $\mathcal{L}(E,F)$, we can find a neighborhood $\mathcal{U}\subset\Phi_0(E,F)$ of $L$ s.t.\ $A\in\mathcal{C}(T)$ for all $T\in\mathcal{U}$. So, any operator $T\in\mathcal{U}$ can be oriented so that $A\in\mathcal{C}(T)$ is a positive corrector. In such a way, any orientation of $L$ induces orientations of nearby $\Phi_0$-operators, which allows us to define orientability of $\Phi_0(E,F)$-valued maps:

\begin{definition}\label{orientcont}
Let $X$ be a topological space, $h\in C(X,\Phi_0(E,F))$. An \emph{orientation} of $h$ is a map $\alpha$ defined in $X$, s.t.\ for all $x\in X$
\begin{enumroman}
\item\label{orientcont1} $\alpha(x)$ is an orientation of $h(x)\in\Phi_0(E,F)$;
\item\label{orientcont2} there exist $A\in\alpha(x)$ and a neighborhood $V\subset X$ of $x$, s.t.\ $A\in\alpha(y)$ for all $y\in V$ (continuity).
\end{enumroman}
The map $h$ is \emph{orientable} if it admits an orientation, and in such case $(h,\alpha)$ is an \emph{oriented} $\Phi_0(E,F)$-valued map.
\end{definition}

\noindent
Now we can consider (nonlinear) Fredholm maps:

\begin{definition}\label{fredmap}
Let $\Omega\subseteq E$ be open. A map $g\in C^1(\Omega,F)$ is a $\Phi_0$-\emph{map}, if $Dg(x)\in\Phi_0(E,F)$ for all $x\in\Omega$.
\end{definition}

\noindent
For instance, any Fredholm operator $L\in\Phi_0(E,F)$ is a $\Phi_0$-map, since $DL(x)=L$ for all $x\in E$.

\begin{definition}\label{fredmapor}
Let $\Omega\subseteq E$ be open, $g\in C^1(\Omega,F)$ be a $\Phi_0$-map:
\begin{enumroman}
\item\label{fredmapor1} an \emph{orientation} of $g$ is any orientation of $Dg\in C(\Omega,\Phi_0(E,F))$ (Definition \ref{orientcont});
\item\label{fredmapor2} the map $g$ is \emph{orientable} if it admits an orientation $\alpha$, and in such case $(g,\alpha)$ is an \emph{oriented} $\Phi_0$-map.
\end{enumroman}
\end{definition}

\noindent
The existence (and number) of orientations of a $\Phi_0$-map depend mainly on the topology of its domain (see \cite{BF} for the proof):

\begin{proposition}\label{fredmaptop}
Let $\Omega\subseteq E$ be open, $g\in C^1(\Omega,F)$ be a $\Phi_0$-map:
\begin{enumroman}
\item\label{fredmaptop1} if $g$ is orientable, then it admits at least two orientations;
\item\label{fredmaptop2} if $g$ is orientable and $\Omega$ is connected, then $g$ admits exactly two orientations;
\item\label{fredmaptop3} if $\Omega$ is simply connected, then $g$ is orientable.
\end{enumroman}
\end{proposition}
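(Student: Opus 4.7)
The plan is to prove the three items in sequence, using the same positive-corrector description of an orientation but handling them with different tools: pointwise linear algebra for (i), connectedness for (ii), and a monodromy/path-lifting argument for (iii).

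For (i), given an orientation $\alpha$ of $g$, I would define $\alpha'(x)$ to be the $Dg(x)$-equivalence class opposite to $\alpha(x)$ for every $x\in\Omega$, and verify the continuity clause of Definition \ref{orientcont}\ref{orientcont2}. Fix $x_0\in\Omega$ and $A\in\alpha(x_0)$, chosen (by openness of isomorphisms) so that $A\in\mathcal{C}(Dg(y))$ positively for $y$ in some neighborhood $V$ of $x_0$. I would then exhibit an explicit corrector $B\in\mathcal{C}(Dg(x_0))\setminus\alpha(x_0)$ of the form $B=A+S$ with $S$ of rank one, by picking $v\in E\setminus\{0\}$ and $\psi\in E^*$ with $\psi(v)=1$ and setting $S=-2(Dg(x_0)+A)(\psi(\cdot)v)$. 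This gives $(Dg(x_0)+A)^{-1}(Dg(x_0)+B)=I-2\,\psi(\cdot)v$, whose determinant in the sense of Definition \ref{L-equiv} equals $-1$. Again by openness of isomorphisms and continuity of $Dg$, $B$ remains a corrector on a neighborhood $V'\subseteq V$ of $x_0$, and the sign of the determinant $(Dg(y)+A)^{-1}(Dg(y)+B)$ is preserved there; so $B$ is a positive corrector for $\alpha'$ near $x_0$.

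For (ii), with two orientations $\alpha_1,\alpha_2$, I would set
\[
\Omega_+=\{x\in\Omega:\alpha_1(x)=\alpha_2(x)\},\qquad \Omega_-=\{x\in\Omega:\alpha_1(x)\neq\alpha_2(x)\}.
\]
At $x\in\Omega_+$, take $A\in\alpha_1(x)=\alpha_2(x)$ together with common neighborhoods from Definition \ref{orientcont}\ref{orientcont2}; on their intersection $A$ is positive for both, so $\Omega_+$ is open. At $x\in\Omega_-$, the constructions of (i) give a common neighborhood where $\alpha_1$ and $\alpha_2$ are flips of one another, so $\Omega_-$ is open. Connectedness of $\Omega$ forces one of them to coincide with $\Omega$; together with (i) this yields exactly two orientations.

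Step (iii) is where I expect the main obstacle, since it is a genuinely topological statement. The plan is a path-lifting argument. For every $x\in\Omega$, pick $A_x\in\mathcal{C}(Dg(x))$ and a connected open neighborhood $V_x\subseteq\Omega$ of $x$ on which $A_x$ remains a corrector of $Dg(\cdot)$; then two local orientations $\alpha^{\pm}_{V_x}$ are available on $V_x$, depending on whether one declares $A_x$ positive or negative. On overlaps $V_x\cap V_y$, (i)--(ii) show that the relation "same local orientation" is either constant or constantly opposite on each connected component, yielding a $\{\pm 1\}$-valued \v{C}ech cocycle with respect to a suitable cover. I would fix a basepoint $x_0\in\Omega$ and an orientation $\alpha_0$ of $Dg(x_0)$, then define $\alpha(x)$ by propagating $\alpha_0$ along any continuous path in $\Omega$ from $x_0$ to $x$ using these local orientations (note $\Omega$ is locally path-connected as an open subset of a Banach space). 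The key technical point is that the resulting orientation at $x$ does not depend on the path chosen: by a standard homotopy-lifting argument the parity of flips along a loop is a homotopy invariant, and simple connectedness forces it to vanish. Checking continuity of the resulting $\alpha$ is then automatic from its local definition. The delicate part is the homotopy invariance, for which I would reduce to the case of a homotopy with values in a single neighborhood $V_x$ and then iterate along a subdivision.
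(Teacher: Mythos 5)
The paper itself does not contain a proof of Proposition~\ref{fredmaptop}: it defers entirely to \cite{BF}, where Benevieri and Furi establish all three items by viewing the orientations of $Dg(x)$ as the fibre of a two-fold covering of $\Omega$. Your proposal reconstructs precisely that viewpoint, and each of the three steps is correct. For (i), your explicit rank-one perturbation $B=A-2(Dg(x_0)+A)(\psi(\cdot)v)$ gives $(Dg(x_0)+A)^{-1}(Dg(x_0)+B)=I-2\psi(\cdot)v$ of determinant $-1$, so it furnishes a negative corrector that persists on a neighbourhood, and the continuity requirement of Definition~\ref{orientcont} for the flipped orientation follows. For (ii), the open partition $\Omega=\Omega_+\cup\Omega_-$ together with connectedness is exactly the right argument, though one should note a small gap you glossed over: Definition~\ref{orientcont}\ref{orientcont2} only asserts the \emph{existence} of a positive corrector $A$ with a local neighbourhood, not that an arbitrary member of $\alpha_1(x)=\alpha_2(x)$ works; this is fixed easily (any two correctors in the same class remain equivalent near $x$ by continuity of the determinant), but it is worth saying. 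For (iii), your monodromy/path-lifting argument is the standard one and does go through: the set of orientations of $Dg(\cdot)$ forms a genuine two-sheeted cover of $\Omega$ (local triviality is furnished by the neighbourhoods $V_x$ on which a fixed corrector persists), $\Omega$ is locally contractible as an open subset of a Banach space, and simple connectedness then forces the cover to be trivial via the usual Lebesgue-number subdivision of a homotopy square. The only place your write-up is a plan rather than a proof is exactly this homotopy-invariance step, but the reduction to single neighbourhoods and iteration over a grid is correctly identified as what is needed. In short, your route is essentially the one taken in \cite{BF}, and the key constructions (the explicit orientation-reversing rank-one corrector, the connectedness dichotomy, and the covering-space/monodromy picture) are all sound.
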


\noindent
Another important use of Definition \ref{orientcont} is towards orientation of Fredholm homotopies:

\begin{definition}\label{fredhom}
Let $\Omega\subseteq E$ be open. A map $h\in C(\Omega\times[0,1],F)$ is a \emph{$\Phi_0$-homotopy}, if 
\begin{enumroman}
\item\label{fredhom1} $h(\cdot,t)$ is a $\Phi_0$-map for all $t\in[0,1]$;
\item\label{fredhom2} the map $(x,t)\mapsto D_xh(x,t)$ is continuous from $\Omega\times[0,1]$ into $\Phi_0(E,F)$, where we denote by $D_xh(x,t)$ the derivative of $h(\cdot,t)$ at $x$.
\end{enumroman}
\end{definition}

\noindent
Note that no differentiability in $t$ is required. Condition \ref{fredhom2} here is crucial, as it allows us to apply Definition \ref{orientcont} to the map $(x,t)\mapsto D_xh(x,t)$, and thus define a notion of orientation for $\Phi_0$-homotopies:

\begin{definition}\label{fredhomor}
Let $\Omega\subseteq E$ be open, $h\in C(\Omega\times[0,1],F)$ be a $\Phi_0$-homotopy:
\begin{enumroman}
\item\label{fredhomor1} an \emph{orientation} of $h$ is any orientation of $D_xh\in C(\Omega\times[0,1],\Phi_0(E,F))$ (Definition \ref{orientcont});
\item\label{fredhomor2} the homotopy $h$ is \emph{orientable} if it admits an orientation $\alpha$, and in such case $(h,\alpha)$ is an \emph{oriented} $\Phi_0$-homotopy.
\end{enumroman}
\end{definition}

\noindent
Let $(h,\alpha)$ be an oriented $\Phi_0$-homotopy. Clearly, $\alpha$ induces an orientation $\alpha_t$ of the $\Phi_0$-map $h(\cdot,t)$, for all $t\in[0,1]$. Remarkably, the converse is also true, as shown by the following result on continuous transportation of orientations (see \cite[Theorem 3.14]{BF}):

\begin{proposition}\label{transport}
Let $\Omega\subseteq E$ be open, $h\in C(\Omega\times[0,1],F)$ be a $\Phi_0$-homotopy, $t\in[0,1]$ be s.t.\ $h(\cdot,t)\in C^1(\Omega,F)$ admits an orientation $\alpha_t$. Then, there exists a unique orientation $\alpha$ of $h$ which induces $\alpha_t$.
\end{proposition}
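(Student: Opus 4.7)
The plan is to propagate the given orientation $\alpha_t$ from the slice $\Omega\times\{t\}$ to the full cylinder $\Omega\times[0,1]$ by a path-lifting argument along each segment $\{x_0\}\times[0,1]$, and then verify the local continuity condition of Definition \ref{orientcont} using compactness.

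\textbf{Pointwise extension.} Fix $(x_0,s_0)\in\Omega\times[0,1]$ and consider the continuous path
\[
\gamma:[0,1]\to\Phi_0(E,F),\qquad \gamma(r)=D_x h\bigl(x_0,(1-r)t+r s_0\bigr).
\]
Since the set of isomorphisms is open in $\mathcal{L}(E,F)$, for each $r\in[0,1]$ and each $A\in\mathcal{C}(\gamma(r))$ there is a neighborhood of $r$ on which $A$ remains a corrector. By compactness of $[0,1]$, one obtains a partition $0=r_0<r_1<\dots<r_n=1$ and correctors $A_i$ such that $A_i\in\mathcal{C}(\gamma(r))$ for all $r\in[r_{i-1},r_i]$. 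Starting from the class $\alpha_t(x_0)\subset\mathcal{C}(\gamma(0))$, the chain of correctors produces via successive $L$-equivalences a unique orientation class at $\gamma(1)$, which I define to be $\alpha(x_0,s_0)$.

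\textbf{Well-definedness.} If $\{r_i'\},\{A_j'\}$ is another such chain, one passes to a common refinement; on each subinterval where two correctors $A$ and $B$ are both valid, the sign of $\det\bigl((\gamma(r)+A)^{-1}(\gamma(r)+B)\bigr)$ is constant by continuity and non-vanishing, so the two chains yield the same final $L$-equivalence class. By the chain rule for determinants, this shows independence of the chosen subdivision and of the representatives $A_i$.

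\textbf{Continuity.} To check Definition \ref{orientcont}(ii), fix $(x_0,s_0)$ and take the chain $(r_i,A_i)$ constructed above. For each $i$, openness of the isomorphism set gives a neighborhood $W_i\subset\Omega\times[0,1]$ of the subsegment $\{x_0\}\times[(1-r_{i-1})t+r_{i-1}s_0,\,(1-r_i)t+r_i s_0]$ on which $A_i$ is still a corrector of $D_xh$. Using compactness of $\{x_0\}\times[t,s_0]$ (or $[s_0,t]$) and the continuity hypothesis on $\alpha_t$, I can find a single neighborhood $U\times V$ of $(x_0,s_0)$ such that: (a) $A_1$ lies in $\alpha_t(x_1)$ for every $x_1\in U$, and (b) each $A_i$ is a corrector of $D_x h(x_1,s)$ whenever $(x_1,s)\in U\times V$ and the time coordinate lies in the $i$-th subinterval. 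Repeating the propagation construction along $\{x_1\}\times[0,1]$ with the same chain $(A_i)$ produces the same $L$-equivalence class at $\gamma_{x_1}(1)$, so $A_n$ is a positive corrector for $\alpha(x_1,s_1)$ throughout $U\times V$. This gives the required local common corrector.

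\textbf{Uniqueness.} For any orientation $\alpha'$ of $h$ inducing $\alpha_t$, the set of points of $\{x_0\}\times[0,1]$ where $\alpha'$ agrees with $\alpha$ is nonempty (it contains $t$), and both open and closed by the common-corrector argument of Step 3 applied to $\alpha'$; hence $\alpha'=\alpha$ on each segment, and therefore on all of $\Omega\times[0,1]$.

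The main obstacle is the continuity step: one must combine the finite chain of correctors along the segment $\{x_0\}\times[t,s_0]$ with the joint continuity of $D_xh(x,s)$ so that the \emph{same} chain remains valid, in the same time intervals, for all nearby points $(x_1,s_1)$, and then argue that the propagated orientation class at $(x_1,s_1)$ coincides with the one obtained by any direct local comparison. This is essentially a compactness-plus-open-cover argument, but it is what forces the extension to be continuous rather than merely pointwise.
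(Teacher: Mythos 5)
The paper does not prove Proposition~\ref{transport} in-text; it simply cites \cite[Theorem 3.14]{BF}, so there is no internal argument to compare against. Your self-contained proof is correct and runs along the standard monodromy/path-lifting lines that one would expect: pointwise vertical propagation of the class by a finite chain of correctors, well-definedness by passing to a common refinement and using multiplicativity of $\det$ for finite-rank perturbations of the identity, local continuity by observing that for all $(x_1,s_1)$ near $(x_0,s_0)$ the \emph{same} chain $(A_i)$, with the \emph{same} initial positive corrector $A_1\in\alpha_t(x_1)$ (which exists by the assumed continuity of $\alpha_t$), propagates to a class for which the terminal corrector is common, and uniqueness by the open-closed-nonempty argument on each vertical segment. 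You correctly identified the delicate point --- the pointwise construction must be shown to satisfy Definition~\ref{orientcont}~\ref{orientcont2} jointly in $x$ and $s$ --- and the compactness-plus-open-cover argument handles it. One place to phrase more carefully: when you move from $s_0$ to a nearby $s_1$, the reparametrized path $r\mapsto(1-r)t+rs_1$ traces time intervals that are slightly shifted relative to the original partition, so ``the time coordinate lies in the $i$-th subinterval'' should be replaced by ``$(x_1,(1-r)t+rs_1)$ lies in the open set $W_i$ on which $A_i$ is a corrector''; since the $W_i$ form an open cover of the compact segment $\{x_0\}\times[t,s_0]$ and the shifted path stays uniformly close for $(x_1,s_1)$ in a small enough $U\times V$, this is immediate, but it is the cleaner way to state what the open cover is actually covering. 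Otherwise the argument is complete.
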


\noindent
We conclude this section by establishing a link between the orientation of Fredholm maps and that of manifolds:

\begin{proposition}\label{fredpreimage}
Let $\Omega\subseteq E$ be open, $g\in C^1(\Omega,F)$ be an orientable $\Phi_0$-map, $F_1\subseteq F$ be a finite-dimensional subspace, transverse to $g$, and $M_1=g^{-1}(F_1)$. Then:
\begin{enumroman}
\item\label{fredpreimage1} $M_1\subseteq E$ is a $C^1$-manifold with ${\rm dim}(M_1)={\rm dim}(F_1)$;
\item\label{fredpreimage2} $M_1$ is orientable;
\item\label{fredpreimage3} any orientation of $g$ and any orientation of $F_1$ induce an orientation of $M_1$.
\end{enumroman}
\end{proposition}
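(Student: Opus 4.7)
The plan for (i) is a pointwise application of the implicit function theorem. Fix $x_0\in M_1$ and set $L=Dg(x_0)\in\Phi_0(E,F)$. First I would check that the candidate tangent space $T:=L^{-1}(F_1)$ is finite-dimensional of dimension exactly ${\rm dim}\,F_1$: ${\rm ker}\,L\subseteq T$ is finite-dimensional, the quotient $T/{\rm ker}\,L$ is isomorphic to ${\rm im}\,L\cap F_1$, and a codimension count using Fredholm index zero together with transversality ${\rm im}\,L+F_1=F$ gives the claim. Choose closed complements $E=E_0\oplus T$ and $F=F_0\oplus F_1$ (both exist since $T$ and $F_1$ are finite-dimensional), and note that transversality easily implies $\pi_{F_0}\circ L|_{E_0}:E_0\to F_0$ is bijective, hence a Banach isomorphism by the open mapping theorem. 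Applying the classical implicit function theorem to $\pi_{F_0}\circ g:\Omega\to F_0$ at $x_0$ realizes $M_1=(\pi_{F_0}\circ g)^{-1}(0)$ near $x_0$ as a $C^1$-graph over an open subset of $T$, proving (i) and identifying $T_{x_0}M_1=L^{-1}(F_1)$.

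For (ii)--(iii) the strategy is a finite-dimensional reduction to Lemma \ref{backorient}. Let $\alpha$ be an orientation of $g$, let $F_1$ be oriented, fix $x_0\in M_1$, and pick a positive corrector $A\in\alpha(x_0)$. Set $\Psi:=L+A$ (an isomorphism), $F_2:=F_1+{\rm im}\,A$ (finite-dimensional), and $E_2:=L^{-1}(F_2)$. A direct inclusion argument using $A(E)\subseteq F_2$ yields $E_2=\Psi^{-1}(F_2)$, so $\Psi|_{E_2}:E_2\to F_2$ is an isomorphism of finite-dimensional spaces. Dedekind's modular law (with $F_1\subseteq F_2$) combined with ${\rm im}\,L+F_1=F$ gives $F_2=({\rm im}\,L+F_1)\cap F_2=F_1+({\rm im}\,L\cap F_2)=F_1+L(E_2)$, so $L|_{E_2}:E_2\to F_2$ is transverse to $F_1$ in the sense of Definition \ref{transverse}, with $(L|_{E_2})^{-1}(F_1)=T_{x_0}M_1$. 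I would then choose an auxiliary orientation on any complement of $F_1$ in $F_2$, thereby extending the orientation of $F_1$ to one on $F_2$; transport it via $\Psi|_{E_2}$ to orient $E_2$; and invoke Lemma \ref{backorient} for the transverse pair $(L|_{E_2},F_1)$ in this finite-dimensional oriented setting to orient $T_{x_0}M_1$.

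What remains is to verify well-definedness and continuity. Changing the auxiliary orientation on the chosen complement of $F_1$ in $F_2$ reverses the orientations of $F_2$, of $E_2$ (via $\Psi|_{E_2}$), and of the complement of $T_{x_0}M_1$ in $E_2$ appearing in Lemma \ref{backorient}; these three sign flips cancel in the induced orientation of $T_{x_0}M_1$. The more delicate point---and, in my view, the main obstacle---is independence from the positive corrector $A$: given two positive correctors $A,B\in\alpha(x_0)$, one should enlarge $F_2$ to the common space $F_1+{\rm im}\,A+{\rm im}\,B$ and use the positivity ${\rm det}\,((L+B)^{-1}(L+A))>0$ from Definition \ref{L-equiv} to conclude that the two resulting finite-dimensional reductions induce the same orientation on $T_{x_0}M_1$. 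Finally, the continuity clause of Definition \ref{orientcont} lets the same positive corrector $A$ serve on a whole neighborhood $V$ of $x_0$; then $E_2(y):=(Dg(y)+A)^{-1}(F_2)$ is a $C^1$-family of finite-dimensional subspaces of $E$ over $V$, and combined with the local $C^1$-graph description from (i) this assembles the pointwise orientations into a continuous global orientation of $M_1$, proving (ii) and (iii).
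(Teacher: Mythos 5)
Your argument is correct but follows a genuinely different route from the paper's for part \ref{fredpreimage3}. The paper's key simplification is to build, from the outset, a positive corrector $A$ with ${\rm im}\,A\subseteq F_1$: using the splitting $E=Dg(x_0)^{-1}(F_1)\oplus E_2$, $F=F_1\oplus Dg(x_0)(E_2)$, one writes $Dg(x_0)$ in block form and chooses a finite-rank $A$ concentrated on $Dg(x_0)^{-1}(F_1)$ with image in $F_1$ so that $Dg(x_0)+A$ is an isomorphism, then adjusts the block to make $A$ positive. With such a corrector, $(Dg(x_0)+A)|_{T_{x_0}M_1}$ is immediately an isomorphism onto $F_1$, and one simply orients $T_{x_0}M_1$ so that this isomorphism is orientation-preserving — no intermediate space $F_2$, no auxiliary complement of $F_1$, no sign-flip cancellation to check, and no appeal to Lemma \ref{backorient}. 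Your version, starting from an \emph{arbitrary} positive corrector and enlarging to $F_2=F_1+{\rm im}\,A$, $E_2=\Psi^{-1}(F_2)$, also works and in fact degenerates to the paper's construction when ${\rm im}\,A\subseteq F_1$ (then $F_2=F_1$, $E_2=T_{x_0}M_1$, and the complement $E_0$ is trivial); the cost is that you must introduce and then argue away a choice of orientation on a complement of $F_1$ in $F_2$, routed through $\Psi|_{E_2}$ and Lemma \ref{backorient}. Both proofs leave the essential well-definedness point — that the induced orientation of $T_{x_0}M_1$ is independent of the positive corrector $A$ — to a determinant computation: the paper cites \cite{BF2}, while you correctly identify the mechanism (pass to $F_1+{\rm im}\,A+{\rm im}\,B$ and use ${\rm det}\big((L+B)^{-1}(L+A)\big)>0$) without carrying it out. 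Your treatment of \ref{fredpreimage1} via the implicit function theorem, and of the continuity of the pointwise orientation, is more detailed than the paper's, which dispatches \ref{fredpreimage1} as obvious and the local-to-global step in a single sentence.
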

\begin{proof}
Assertion \ref{fredpreimage1} is obvious (see Section \ref{sec2}). Assertion \ref{fredpreimage2} follows from \cite[Remark 2.5, Lemma 3.1]{BF}.
\vskip2pt
\noindent
We prove \ref{fredpreimage3}. Let $\alpha$ be an orientation of $g$, and $x\in M_1$. By Definition \ref{fredmapor}, $\alpha(x)$ is an orientation of $Dg(x)\in\Phi_0(E,F)$. By transversality (Definition \ref{transverse}), we can find $A\in\alpha(x)$ s.t.\ ${\rm im}\,A\subseteq F_1$. Indeed, since $Dg(x)\in\Phi_0(E,F)$, we can split both Banach spaces as follows:
\[E=Dg(x)^{-1}(F_{1})\oplus E_2, \ F=F_1\oplus F_{2},\]
where $E_{2}$ is any direct complement of $Dg(x)^{-1}(F_{1})$ and $F_{2}:=Dg(x)\left(E_{2}\right)$. Observe that ${\rm ker}\,Dg(x)\subseteq Dg(x)^{-1}(F_{1})$ and the latter has the same dimension as $F_{1}$.  So we rephrase $Dg(x)$ as
\[Dg(x)=\begin{bmatrix}
L_{1,1} & 0 \\
0 & L_{2,2}
\end{bmatrix},\]
where $L_{2,2}\in\mathcal{L}(E_2,F_{2})$ is an isomorphism. We may choose $A\in\mathcal{L}(E,F)$ with the structure
\begin{equation}
\label{corrbm}A=\begin{bmatrix}
A_{1,1} & 0 \\
0 & 0
\end{bmatrix},
\end{equation}
where $A_{1,1}+L_{1,1}\in\mathcal{L}(L^{-1}(F_{1}),F_1)$ is an isomporhism. So $A\in\mathcal{C}(Dg(x))$ and ${\rm im}\,A\subseteq F_1$. Choosing $A_{1,1}$ in such a way that $A\in \alpha(x)$ and assigning an orientation to $F_{1}$, we orient the tangent space $T_x(M_1)\subset E$ so that the isomorphism
\[(Dg(x)+A)_{|T_x(M_1)}\in\mathcal{L}(T_x(M_1),F_1)\]
is orientation-preserving. As proved in \cite{BF2}, such orientation of $T_x(M_1)$ does not depend on $A$. This pointwise choice induces a global orientation on $M_1$.
\end{proof}

\noindent
We can now give a Fredholm analogue of Definition \ref{orpreim}:

\begin{definition}\label{fredorpreim}
Let $\Omega\subseteq E$ be open, $(g,\alpha)$ be an oriented $\Phi_0$-map, $F_1\subseteq F$ be a finite-dimensional subspace, transverse to $g$, and $M_1=g^{-1}(F_1)$. With the orientation induced by $\alpha$ and the orientation of $F_1$, $M_1$ is an \emph{oriented $(\Phi_0,g)$-preimage} of $F_1$.
\end{definition}

\begin{remark}
In what follows, we will denote an oriented $\Phi_0$-operator $(L,\alpha)$ simply by $L$, as long as no confusion arises. We will do the same for oriented $\Phi_0$-maps, $\Phi_0$-homotopies, and so on.
\end{remark}

\section{Topological properties of set-valued maps}\label{sec4}

\noindent
In this section we point out some definitions and properties of set-valued maps between metric spaces, referring the reader to \cite{G} for details. Let $X$, $Y$ be metric spaces with distance functions $d_X$, $d_Y$, respectively. Then $X\times Y$ is a metric space under the distance
\[d\big((x,y),(x',y')\big)=\max\big\{d_X(x,x'),\,d_Y(y,y')\big\}.\]
For all $A\subset X$, $x\in X$ we set
\[{\rm dist}(x,A)=\inf_{z\in A}\,d_X(x,z),\]
and for all $\eps>0$ we set
\[B_\eps(A)=\big\{x\in X:\,{\rm dist}(x,A)<\eps\big\}\]
(if $A=\{x\}$, then we set $B_\eps(A)=B_\eps(x)$). A set-valued map $\phi:X\to 2^Y$ is a map from $X$ to the set of all parts of $Y$. {\em We will always assume that $\phi$ is compact-valued, i.e., that $\phi(x)\subseteq Y$ is either $\emptyset$ or compact, for all $x\in X$.} The graph of $\phi$ is defined by
\[{\rm graph}\,\phi=\big\{(x,y)\in X\times Y:\,y\in\phi(x)\big\}.\]
We also recall a classical definition:

\begin{definition}\label{usc}
A set-valued map $\phi:X\to 2^Y$ is \emph{upper semi-continuous (u.s.c.)}, if for all open $V\subseteq Y$ the set
\[\phi^+(V)=\big\{x\in X:\,\phi(x)\subseteq V\big\}\]
is open.
\end{definition}

\noindent
A remarkable property of u.s.c.\ set-valued maps, is that they preserve compactness. Any (single-valued) map $f:X\to Y$ coincides with the set-valued map $\phi(x)=\{f(x)\}$, in such case $\phi$ is u.s.c.\ iff $f$ is continuous. Another interesting special case is that of set-valued maps with a compact graph (see also \cite[Remark 2.1]{BCFP}):

\begin{lemma}\label{compgr}
Let $\mathcal{K}\subset X\times Y$ be compact, and set for all $x\in X$
\[\phi(x)=\big\{y\in Y:\,(x,y)\in\mathcal{K}\big\}.\]
Then, $\phi:X\to 2^Y$ is u.s.c.\
\end{lemma}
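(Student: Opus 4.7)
The plan is to prove the lemma directly from Definition \ref{usc}, using the sequential characterization of openness in metric spaces. So I fix an arbitrary open set $V\subseteq Y$ and aim to show that $\phi^+(V)=\{x\in X:\phi(x)\subseteq V\}$ is open in $X$.

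I would argue by contradiction. Assume that $\phi^+(V)$ is not open. Then there exists $x_0\in \phi^+(V)$ and a sequence $(x_n)\subset X\setminus\phi^+(V)$ with $x_n\to x_0$. By definition, for each $n$ we can pick some $y_n\in\phi(x_n)\setminus V$, which means $(x_n,y_n)\in\mathcal{K}$ and $y_n\in Y\setminus V$. Since $\mathcal{K}$ is compact, the sequence $(x_n,y_n)$ admits a subsequence converging to some $(x_\infty,y_\infty)\in\mathcal{K}$. Because the projection onto $X$ is continuous, necessarily $x_\infty=x_0$. Hence $y_\infty\in\phi(x_0)$, which by hypothesis is contained in $V$. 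On the other hand, $Y\setminus V$ is closed and contains every $y_n$, so $y_\infty\in Y\setminus V$, a contradiction.

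This argument covers uniformly both the case in which $\phi(x_0)$ is compact and the case $\phi(x_0)=\emptyset$, since in the latter situation the containment $\phi(x_0)\subseteq V$ is automatic and the same contradiction applies. The only step that requires a brief justification is the use of sequential openness, which is standard in metric spaces and is consistent with the framework set up at the start of Section \ref{sec4}. I do not anticipate a genuine obstacle: the key point is simply that compactness of $\mathcal{K}$ converts the statement into a one-line sequential compactness argument, with the metric hypothesis on $X\times Y$ ensuring that extracting a convergent subsequence from $(x_n,y_n)$ is legitimate.
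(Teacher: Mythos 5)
Your proof is correct and follows essentially the same contradiction-and-sequential-compactness argument as the paper's: take $(x_n)\to x_0\in\phi^+(V)$ with $x_n\notin\phi^+(V)$, pick $y_n\in\phi(x_n)\setminus V$, extract a convergent subsequence in the compact set $\mathcal{K}$, and contradict $\phi(x_0)\subseteq V$ using that $Y\setminus V$ is closed. The only (harmless) addition is your explicit remark on the $\phi(x_0)=\emptyset$ case, which the paper leaves implicit.
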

\begin{proof}
We argue by contradiction. Let $V\subset Y$ be open, s.t.\ $\phi^+(V)\subset X$ is not open. Then, we can find a sequence $(x_n)$ in $X\setminus\phi^+(V)$ s.t.\ $x_n\to x$ for some $x\in\phi^+(V)$. That means that for all $n\in\N$ $\phi(x_n)$ is not empty since is not contained in $V$. Hence, there exists $y_n\in\phi(x_n)\setminus V$, while $\phi(x)\subset V$. The sequence $(x_n,y_n)$, which lies in $\mathcal{K}$, admits a subsequence, still denoted $(x_n,y_n)$, converging to $(x,y)\in\mathcal{K}$. So $y_n\to y$, and since $Y\setminus V$ is closed we have $y\in Y\setminus V$. So $y\notin\phi(x)$, against $(x,y)\in\mathcal{K}$.
\end{proof}

\noindent
We introduce the notion of {\em approximability}:

\begin{definition}\label{approx}
Let $\phi:X\to 2^Y$:
\begin{enumroman}
\item\label{approx1} for all $\eps>0$, $f\in C(X,Y)$ is an \emph{$\eps$-approximation} of $\phi$, if for all $x\in X$ there exists $x'\in B_\eps(x)$ s.t.\ $f(x)\in B_\eps(\phi(x'))$ (the set of $\eps$-approximations of $\phi$ is denoted $B_\eps(\phi)$);
\item\label{approx2} $\phi$ is \emph{approximable}, if $B_\eps(\phi)\neq\emptyset$ for all $\eps>0$.
\end{enumroman}
\end{definition}

\noindent
Note that all approximations of a set-valued map are required to be {\em continuous}. A characterization (whose proof is an obvious consequence of Definition \ref{approx}):

\begin{lemma}\label{apprchar}
Let $\phi:X\to 2^Y$, $\eps>0$, $f\in C(X,Y)$. Then, the following are equivalent:
\begin{enumroman}
\item\label{apprchar1} $f\in B_\eps(\phi)$;
\item\label{apprchar2} $f(x)\in B_\eps\big(\phi(B_\eps(x))\big)$ for all $x\in X$;
\item\label{apprchar3} ${\rm graph}\,f\subseteq B_\eps({\rm graph}\,\phi)$.
\end{enumroman}
\end{lemma}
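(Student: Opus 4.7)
The plan is to verify the two equivalences $(\ref{apprchar1})\Leftrightarrow(\ref{apprchar2})$ and $(\ref{apprchar1})\Leftrightarrow(\ref{apprchar3})$ by unpacking the definitions of $B_\eps(\cdot)$ in $X$, in $Y$, and in the product space $X\times Y$. Since all three conditions express the same quantified statement dressed in different notation, the proof reduces to a bookkeeping exercise on quantifiers, and no analytical content is required beyond the definition of the product metric.

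For $(\ref{apprchar1})\Leftrightarrow(\ref{apprchar2})$, I would start by rewriting $\phi(B_\eps(x))=\bigcup_{x'\in B_\eps(x)}\phi(x')$, so that $B_\eps(\phi(B_\eps(x)))=\bigcup_{x'\in B_\eps(x)}B_\eps(\phi(x'))$. Then $f(x)\in B_\eps(\phi(B_\eps(x)))$ precisely when there exists $x'\in B_\eps(x)$ with $f(x)\in B_\eps(\phi(x'))$, which is verbatim the defining property of $f\in B_\eps(\phi)$ from Definition \ref{approx}\,(\ref{approx1}). The only subtle point is to handle the case $\phi(x')=\emptyset$, where by convention $B_\eps(\emptyset)=\emptyset$, so such $x'$ contribute nothing and the equivalence still holds term by term.

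For $(\ref{apprchar1})\Leftrightarrow(\ref{apprchar3})$, I would recall that the product metric on $X\times Y$ is $d((x,y),(x',y'))=\max\{d_X(x,x'),d_Y(y,y')\}$. Hence $(x,f(x))\in B_\eps(\mathrm{graph}\,\phi)$ if and only if there exists $(x',y')\in\mathrm{graph}\,\phi$ with $d_X(x,x')<\eps$ and $d_Y(f(x),y')<\eps$, i.e., some $x'\in B_\eps(x)$ and some $y'\in\phi(x')$ with $d_Y(f(x),y')<\eps$. Quantifying over all $x\in X$ and noting that $y'\in\phi(x')$ together with $d_Y(f(x),y')<\eps$ amounts to $f(x)\in B_\eps(\phi(x'))$, one recovers exactly Definition \ref{approx}\,(\ref{approx1}). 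Thus $\mathrm{graph}\,f\subseteq B_\eps(\mathrm{graph}\,\phi)$ iff $f\in B_\eps(\phi)$.

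There is no real obstacle in this proof; if anything, the only thing to be careful about is that the product metric is defined as the maximum (rather than, say, the sum or Euclidean combination of the factor distances), because only then does $(x',y')\in B_\eps(x,f(x))$ translate directly into the two conditions $x'\in B_\eps(x)$ and $y'\in B_\eps(f(x))$ without a constant factor. Once this convention is in place, the three conditions are merely syntactic variants of one another, and the lemma follows immediately.
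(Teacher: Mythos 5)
Your proof is correct, and it is exactly the definition-unpacking argument the paper alludes to when it calls the lemma ``an obvious consequence of Definition \ref{approx}'' without writing out the details. The key observations you make — that $B_\eps(\cdot)$ commutes with unions, and that the max product metric makes $(x',y')\in B_\eps\big((x,f(x))\big)$ split into $x'\in B_\eps(x)$ and $y'\in B_\eps(f(x))$ — are precisely what turn the three statements into syntactic rewrites of one another.
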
 

\noindent
Approximation of an u.s.c.\ set-valued map is a special case, enjoying several properties (see \cite[Proposition 22.3]{G}):

\begin{proposition}\label{apprusc}
Let $\phi:X\to 2^Y$ be u.s.c. Then:
\begin{enumroman}
\item\label{apprusc1} for all compact $X_1\subseteq X$, $\eps>0$ there exists $\delta>0$ s.t.\ for all $f\in B_\delta(\phi)$ we have $\restr{f}{X_1}\in B_\eps(\restr{\phi}{X_1})$;
\item\label{apprusc2} if $X$ is compact, then for any metric space $Z$, $g\in C(Y,Z)$, and $\eps>0$ there exists $\delta>0$ s.t.\ for all $f\in B_\delta(\phi)$ we have $g\circ f\in B_\eps(g\circ\phi)$;
\item\label{apprusc3} if $X$ is compact, then for any u.s.c.\ set-valued map $\psi:X\times[0,1]\to 2^Y$, $\eps>0$, and $t\in[0,1]$ there exists $\delta>0$ s.t.\ for all $f\in B_\delta(\psi)$ we have $f(\cdot,t)\in B_\eps(\psi(\cdot,t))$;
\item\label{apprusc4} for any metric space $Z$, any u.s.c.\ set-valued map $\psi:X\to 2^Z$, and $\eps>0$ there exists $\delta>0$ s.t.\ for all $f\in B_\delta(\phi)$, $g\in B_\delta(\psi)$ we have $(f,g)\in B_\eps(\phi\times\psi)$.
\end{enumroman}
\end{proposition}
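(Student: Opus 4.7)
The four parts share a single engine: upper semicontinuity of $\phi$ at a point $x$ furnishes, for any $\eps>0$, a modulus $\eta(x)>0$ with $\phi(z)\subseteq B_{\eps/2}(\phi(x))$ whenever $d_X(z,x)<\eta(x)$, and finite covering arguments on compact sets convert this pointwise data into uniform thresholds. I plan to establish (i) in detail, derive (ii) and (iii) by the same template, and then treat the more subtle (iv).

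For (i), at each $x\in X_1$ I select $\eta(x)\in(0,\eps)$ as above, extract a finite subcover $\{B_{\eta(x_i)/2}(x_i)\}$ of the compact set $X_1$, and set $\delta=\tfrac12\min_i\eta(x_i)$. Given $f\in B_\delta(\phi)$ and $x\in X_1$, I pick $i$ with $x\in B_{\eta(x_i)/2}(x_i)$; the approximation property in $X$ yields $x'\in B_\delta(x)$ with $f(x)\in B_\delta(\phi(x'))$, and $d(x',x_i)<\eta(x_i)$ forces $\phi(x')\subseteq B_{\eps/2}(\phi(x_i))$, hence $f(x)\in B_\eps(\phi(x_i))$. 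Since $x_i\in X_1$ and $d(x,x_i)<\eps$, this gives $\restr{f}{X_1}\in B_\eps(\restr{\phi}{X_1})$. For (iii), the same template is applied to $\psi$ on the compact slice $X\times\{t\}\subset X\times[0,1]$, with the final witness read as $(x_i,t)$, so that $f(\cdot,t)$ approximates $\psi(\cdot,t)$ with modulus $\eps$.

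For (ii), the image $\phi(X)$ is compact (u.s.c.\ compact-valued image of a compact set), so I fix a compact neighborhood $K\supseteq\phi(X)$ in $Y$; then $g$ is uniformly continuous on $K$, providing $\delta_0>0$ with $d_Y(y,y')<\delta_0\Rightarrow d_Z(g(y),g(y'))<\eps$. Applying (i) with $X_1=X$ at level $\min(\delta_0,\eps)$ yields $\delta>0$ such that every $f\in B_\delta(\phi)$ admits, for each $x$, some $x'\in B_\eps(x)$ and $y\in\phi(x')$ with $d_Y(f(x),y)<\delta_0$; then $g(y)\in(g\circ\phi)(x')$ and $d_Z(g(f(x)),g(y))<\eps$, proving $g\circ f\in B_\eps(g\circ\phi)$.

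The main hurdle is (iv): the approximations $f\in B_\delta(\phi)$ and $g\in B_\delta(\psi)$ at the same $x$ supply witnesses $x_1',x_2'\in B_\delta(x)$ for $\phi$ and $\psi$ respectively which need not coincide, and u.s.c.\ controls $\phi(z)$ \emph{inside} a neighborhood of $\phi(x)$ but does not exhibit points of $\phi(z)$ close to prescribed points of $\phi(x)$. My remedy is to force the common witness to be $x'=x$: u.s.c.\ of $\phi$ and $\psi$ at $x$ yields $\eta(x)>0$ with $\phi(z)\subseteq B_{\eps/2}(\phi(x))$ and $\psi(z)\subseteq B_{\eps/2}(\psi(x))$ for $d(z,x)<\eta(x)$, so the points $y_1\in\phi(x_1')$ and $z_2\in\psi(x_2')$ furnished by the approximations lie within $\eps/2$ of $\phi(x)$ and $\psi(x)$; combined with $d(f(x),y_1),d(g(x),z_2)<\delta$, this produces $y\in\phi(x)$, $z\in\psi(x)$ with $d(f(x),y),d(g(x),z)<\eps$, that is $(f(x),g(x))\in B_\eps(\phi(x)\times\psi(x))$ with witness $x'=x$. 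A uniform $\delta$ across $X$ is then secured by a finite covering argument on the relevant compact set, which is the setting actually needed in the applications of (iv).
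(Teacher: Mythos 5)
The paper offers no proof of this proposition (it is cited from Gorniewicz \cite[Proposition 22.3]{G}), so your explicit arguments supply something the text itself does not. The covering template is the right engine, and parts (i) and (iii) are correct as written: the half-radius cover of $X_1$ (resp.\ of $X\times\{t\}$), the choice $\delta=\tfrac12\min_i\eta(x_i)$, and the triangle inequalities placing the approximation's witness $x'$ (resp.\ $(x',s')$) inside the $\eta(x_i)$-ball around the anchor $x_i$ (resp.\ $(x_i,t)$) all check out, and the anchor lands in $X_1$ (resp.\ is read off the slice at time $t$) as the definition of $B_\eps(\restr{\phi}{X_1})$ requires.

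In (ii) there is a gap at ``fix a compact neighborhood $K\supseteq\phi(X)$ in $Y$'': a general metric space $Y$ (and in particular the infinite-dimensional Banach spaces $F$ occurring in this paper) need not be locally compact, so no such $K$ need exist. What you actually need is not uniform continuity of $g$ on a compact neighborhood, but a uniform modulus of continuity \emph{at} the compact set $\phi(X)$: for each $y\in\phi(X)$ take $\rho(y)>0$ so that $g$ oscillates by less than $\eps/2$ on $B_{\rho(y)}(y)$, cover $\phi(X)$ by finitely many $B_{\rho(y_j)/2}(y_j)$, and put $\delta_0=\tfrac12\min_j\rho(y_j)$; then $y\in\phi(X)$ together with $d_Y(y,y')<\delta_0$ gives $d_Z(g(y),g(y'))<\eps$, which is all your computation uses. (The appeal to (i) with $X_1=X$ is vacuous; just set $\delta=\min(\delta_0,\eps)$ directly.)

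The real gap is in (iv). Your idea of forcing the common witness $x'=x$ by applying u.s.c.\ of both $\phi$ and $\psi$ at $x$ does give, for each fixed $x$, a pointwise threshold $\delta(x)=\min\{\eta(x),\eps/2\}$ under which $(f(x),g(x))\in B_\eps(\phi(x)\times\psi(x))$. But the proposition asks for a single $\delta>0$ working for every $x$, and unlike (ii)--(iii) the statement of (iv) does \emph{not} assume $X$ compact, so the ``finite covering argument on the relevant compact set'' you invoke at the end is not available. As written, the proof is therefore incomplete, and the issue is not cosmetic: without some uniformity of the modulus $\eta$ the construction collapses. Either (iv) must implicitly carry a compactness hypothesis, or $\phi\times\psi$ is to be read as a map on $X\times X$ rather than on the diagonal (in which case $\delta=\eps$ works trivially, with separate witnesses $x_1',x_2'$ giving the pair $(x_1',x_2')$, and there is nothing to prove). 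You flag this honestly, but deferring it to ``the setting actually needed in the applications'' leaves the stated claim unproved.
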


\noindent
Approximability of a set-valued map is strongly influenced by the topology of its values, the easiest case being in general that of convex-valued maps between Banach spaces. In the general case of a metric space, convexity makes no sense and it must be replaced by a more general notion. We recall from \cite{G} some definitions and properties (here $\mathbb{S}^{n-1}$, $\mathbb{B}^n$ denote the unit sphere and closed  ball, respectively, in $\R^n$):

\begin{definition}\label{aspheric}
A set $A\subset Y$ is \emph{aspheric}, if for any $\eps>0$ there exists $\delta\in(0,\eps)$ s.t.\ for all $n\in\N$ and all $g\in C(\mathbb{S}^{n-1},B_\delta(A))$ there is $\tilde g\in C(\mathbb{B}^n,B_\eps(A))$ s.t.\ $\restr{\tilde g}{\mathbb{S}^{n-1}}=g$.
\end{definition}

\noindent
The following characterization of aspheric sets holds in $ANR$-spaces ({\em absolute neighborhood retracts}, see \cite[Definition 1.7]{G}):

\begin{proposition}\label{asphchar}
Let $Y$ be an $ANR$-space, $A\subseteq Y$. Then, the following are equivalent:
\begin{enumroman}
\item\label{asphchar1} $A$ is aspheric;
\item\label{asphchar2} there exists a decreasing sequence $(A_n)$ of compact, contractible subsets of $Y$ s.t.\ $\cap_{n=1}^\infty A_n=A$ ($R_\delta$-set).
\end{enumroman}
\end{proposition}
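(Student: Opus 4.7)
The two implications have different character, and I would treat them separately.

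For (ii)$\Rightarrow$(i), let $A=\bigcap_{k\ge1}A_k$ with $(A_k)$ a decreasing sequence of compact contractible sets. Given $\eps>0$, the compact sets $A_k$ shrink to the compact set $A$, so $A_m\subseteq B_\eps(A)$ for some $m\ge1$. Because $B_\eps(A)$ is open in the $ANR$-space $Y$, it is itself an $ANR$, and a classical result of retract theory (cf.\ Hu, \emph{Theory of Retracts}) provides a contractible open neighbourhood $W$ of $A_m$ with $W\subseteq B_\eps(A)$. Being open in an $ANR$, $W$ is an $ANR$; being additionally contractible, it is an $AR$. Consequently, every $g\in C(\mathbb{S}^{n-1},W)$ extends continuously to some $\tilde g\in C(\mathbb{B}^n,W)$. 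Choosing $\delta\in(0,\eps)$ with $B_\delta(A)\subseteq W$ produces the constant required in Definition~\ref{aspheric}.

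For (i)$\Rightarrow$(ii), I would construct by recursion on $n$ a decreasing sequence of compact contractible sets $A_n$ with $A\subseteq\mathrm{int}(A_n)\subseteq A_n\subseteq B_{1/n}(A)$. At step $n$, having $A_{n-1}$ already built, consider the nerve $K$ of a sufficiently fine cover of $A$ by balls centred in $A$, augmented by a cone point, so that $K$ is a finite contractible simplicial complex of some dimension $d$. Iteratively applying Definition~\ref{aspheric} yields a chain of radii below $1/n$ controlling the skeletal extensions: the vertices of $K$ are mapped to points of $A$, and for each $k\le d$ the boundary of a $k$-simplex lands in a small neighbourhood of $A$ so that asphericality extends the map across the simplex while staying inside a slightly larger neighbourhood. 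Set $A_n=f(K)\cup A$, arranged to lie in $\mathrm{int}(A_{n-1})\cap B_{1/n}(A)$. A second, parallel application of the skeletal extension procedure---now performed with values inside $A_n$ itself---yields a homotopy $A_n\times[0,1]\to A_n$ contracting $\mathrm{id}_{A_n}$ to a constant, witnessing that $A_n$ is contractible. By compactness of $A$, $\bigcap_n A_n\subseteq\bigcap_n\overline{B_{1/n}(A)}=A$, and the reverse inclusion holds by construction.

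The main difficulty lies in the implication (i)$\Rightarrow$(ii), specifically in ensuring that each $A_n$ is contractible \emph{as a topological subspace of} $Y$, and not merely the continuous image of a contractible complex. The contracting homotopy has to remain inside $A_n$, which forces one to enlarge $A_n$ along the way so as to absorb the image of the homotopy, while still preserving $A_n\subseteq B_{1/n}(A)$ and $A_n\subseteq A_{n-1}$. Careful management of the nested asphericality scales---one family of radii to build $A_n$, another to contract it inside itself---is the technical core of the argument, but no conceptual tool beyond Definition~\ref{aspheric} and standard $ANR$ theory is required.
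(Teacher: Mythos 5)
Both implications as written have gaps.

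For \ref{asphchar2}$\Rightarrow$\ref{asphchar1}, the assertion that a compact contractible subset $A_m$ of an $ANR$ has a \emph{contractible} open neighbourhood $W$ inside a prescribed open set is not warranted under the stated hypotheses: $A_m$ is only compact and contractible, not an $ANR$, and the classical neighbourhood results from retract theory you appeal to require $A_m$ to be a compact $ANR$ (or $AR$). What contractibility of $A_m$ does give---by extending the contracting homotopy of $A_m$ to a neighbourhood, using that $B_\eps(A)$ is an $ANR$, and then shrinking---is an open $V$ with $A_m\subseteq V\subseteq B_\eps(A)$ whose \emph{inclusion} into $B_\eps(A)$ is null-homotopic. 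That weaker statement is exactly what you need: choosing $\delta$ with $B_\delta(A)\subseteq V$, any $g\in C(\mathbb{S}^{n-1},B_\delta(A))$ is null-homotopic in $B_\eps(A)$ and hence extends to some $\tilde g\in C(\mathbb{B}^n,B_\eps(A))$. Replacing your contractible-neighbourhood claim by the null-homotopic-inclusion claim repairs this direction.

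For \ref{asphchar1}$\Rightarrow$\ref{asphchar2}, you have correctly identified the central obstacle---making each $A_n$ contractible as a subspace of $Y$---but the proposed remedy is circular. Enlarging $A_n$ to absorb the image of a contracting homotopy yields a strictly larger set, which in turn needs its own contraction whose image may again escape; nothing in your recursion forces the process to terminate while staying inside $B_{1/n}(A)\cap A_{n-1}$, and you never exhibit a contraction that remains in a fixed compact set. This direction genuinely requires more than Definition \ref{aspheric} plus elementary $ANR$ theory; the usual route passes through shape theory (asphericity makes $A$ cell-like, i.e.\ of trivial shape, and Hyman's theorem identifies compact metric spaces of trivial shape with $R_\delta$-sets), with the realization of the $A_n$ inside $Y$ exploiting that $Y$ is an $ANR$. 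Finally, note that the paper does not prove this proposition but quotes it from G\'orniewicz's monograph, so there is no internal argument to compare against.
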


\noindent
We go back to set-valued maps:

\begin{definition}\label{jmap}
A set-valued map $\phi:X\to 2^Y$ is a $J$-\emph{map}, if $\phi$ is u.s.c.\ and $\phi(x)$ is aspheric for all $x\in X$. The set of $J$-maps from $X$ to $Y$ is denoted by $J(X,Y)$.
\end{definition}

\noindent
Some sufficient conditions:

\begin{lemma}\label{jmapsuff}
Let $Y$ be an $ANR$-space, $\phi:X\to 2^Y$ be u.s.c., and one of the following hold:
\begin{enumroman}
\item\label{jmapsuff1} $\phi(x)$ an $R_\delta$-set for all $x\in X$;
\item\label{jmapsuff2} $\phi(x)$ is an $AR$-set for all $x\in X$.
\end{enumroman}
Then, $\phi\in J(X,Y)$.
\end{lemma}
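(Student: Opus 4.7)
Since $\phi$ is already u.s.c.\ by assumption, the only thing left to establish is that each value $\phi(x)$ is aspheric. My plan is to reduce both cases to Proposition \ref{asphchar}, which within the $ANR$-space $Y$ identifies the aspheric subsets with the $R_\delta$-sets, so that all the hard work about extending sphere-valued maps is already packaged in that characterization.

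In case \ref{jmapsuff1} the reduction is immediate: the decreasing sequence of compact contractible subsets of $Y$ whose intersection is $\phi(x)$ is part of the hypothesis, so Proposition \ref{asphchar} applies verbatim and yields asphericity of $\phi(x)$.

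In case \ref{jmapsuff2} I plan to observe that $\phi(x)$ is both compact (by the standing convention of Section \ref{sec4} that $\phi$ is compact-valued) and contractible (since every $AR$-set is contractible, being a retract of a convex set). Setting $A_n:=\phi(x)$ for every $n\in\N$ then gives a decreasing sequence of compact, contractible subsets of $Y$ with $\bigcap_{n=1}^\infty A_n=\phi(x)$, exhibiting $\phi(x)$ as an $R_\delta$-set; case \ref{jmapsuff1} then closes the argument.

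The main — and essentially only — obstacle is having at hand the two standard facts invoked above: that an $AR$-set is contractible, and the $R_\delta$-characterization of asphericity from Proposition \ref{asphchar}. No estimates, nor any explicit extensions of maps defined on $\mathbb{S}^{n-1}$, need to be carried out directly.
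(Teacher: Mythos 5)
The paper states Lemma \ref{jmapsuff} without an explicit proof (it is offered as a known consequence of the notions drawn from \cite{G}), so there is no in-text argument to compare against; I can only assess correctness. Your proposal is correct. Since upper semi-continuity is part of the hypothesis, membership in $J(X,Y)$ reduces to asphericity of each value, which Proposition \ref{asphchar} identifies (inside the ANR-space $Y$) with the $R_\delta$ property. Case \ref{jmapsuff1} is then immediate, and case \ref{jmapsuff2} reduces to \ref{jmapsuff1} exactly as you say: by the Section \ref{sec4} convention the value $\phi(x)$ is compact, an $AR$ is nonempty and contractible, and the constant decreasing sequence $A_n=\phi(x)$ exhibits $\phi(x)$ as an $R_\delta$-set. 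This is precisely the observation the paper itself relies on implicitly in Remark \ref{convexity}, so your route is in line with the authors' intent; no gaps.
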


\begin{remark}
\label{convexity}
In particular, if either $\phi$ has contractible values \ref{jmapsuff1}, or $Y$ is a Banach space and $\phi$ has convex values \ref{jmapsuff2}, then $\phi\in J(X,Y)$.
\end{remark}

\noindent
The purpose of this topological digression is to introduce a class of approximable set-valued maps (see \cite[Theorems 23.8, 23.9]{G}):

\begin{proposition}\label{jmapappr}
Let $X$ be a compact $ANR$-space, $\phi\in J(X,Y)$. Then:
\begin{enumroman}
\item\label{jmapappr1} $\phi$ is approximable;
\item\label{jmapappr2} for all $\eps>0$ there exists $\delta_\eps>0$ s.t.\ for all $\delta\in(0,\delta_\eps)$, $f,g\in B_\delta(\phi)$ we can find a homotopy $h\in C(X\times[0,1],Y)$ s.t.\ $h(\cdot,0)=f$, $h(\cdot,1)=g$, and $h(\cdot,t)\in B_\eps(\phi)$ for all $t\in[0,1]$.
\end{enumroman}
\end{proposition}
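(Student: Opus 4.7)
The plan is to follow the standard nerve-and-extension construction (essentially as in \cite{G}), producing approximations by first defining them on the geometric realization of a nerve of a fine cover and then pulling back to $X$ via a partition of unity.

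For assertion \ref{jmapappr1}, fix $\eps>0$. By upper semi-continuity together with compactness of $X$, there exists $\eta>0$ such that $d_X(x,x')<\eta$ implies $\phi(x')\subseteq B_{\eps/3}(\phi(x))$. Choose a finite open cover $\{U_1,\dots,U_n\}$ of $X$ with diameters less than $\eta$, together with a subordinate partition of unity $\{\lambda_i\}$; this yields a canonical continuous map $\kappa:X\to|N|$ into the geometric realization of the nerve $N$. For each vertex $U_i$, pick $x_i\in U_i$ and $y_i\in\phi(x_i)$. I would then define a continuous $g:|N|\to Y$ inductively on skeleta: on each simplex $\sigma=[U_{i_0},\dots,U_{i_k}]$ of $N$, the intersection $\bigcap_j U_{i_j}$ is nonempty, so for any $x$ in it the values $y_{i_0},\dots,y_{i_k}$ all lie in $B_{\eps/3}(\phi(x))$. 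Fix a nested sequence $\delta_0<\delta_1<\cdots<\delta_m=\eps$ with $m=\dim|N|$ and apply Definition \ref{aspheric} inductively: an already-defined map $\partial\sigma\to B_{\delta_{k-1}}(\phi(x))$ extends continuously to $\sigma\to B_{\delta_k}(\phi(x))$. The composition $f=g\circ\kappa$ is then continuous and satisfies $f(x)\in B_\eps(\phi(x))$ for every $x\in X$, whence $f\in B_\eps(\phi)$ by Lemma \ref{apprchar}.

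For assertion \ref{jmapappr2}, apply the same machinery to the $J$-map $\Phi:X\times[0,1]\to 2^Y$ given by $\Phi(x,t)=\phi(x)$, viewed on the compact $ANR$-space $X\times[0,1]$. Given $\delta_\eps$ sufficiently small and $f,g\in B_{\delta_\eps}(\phi)$, the prescription $H_0(\cdot,0)=f$, $H_0(\cdot,1)=g$ defines a $\delta_\eps$-approximation of $\Phi$ on the closed subspace $X\times\{0,1\}$. I would choose a cover of $X\times[0,1]$ adapted to the slices $\{t=0\}$ and $\{t=1\}$ so that the nerve-extension construction begins with the data $H_0$ already prescribed on the boundary, and then carry out the aspheric extension relative to this boundary. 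This produces a continuous $h:X\times[0,1]\to Y$ extending $H_0$ with $h(\cdot,t)\in B_\eps(\phi)$ for every $t\in[0,1]$.

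The principal technical difficulty is the coordinated choice of the scales $\delta_0<\cdots<\delta_m$ together with the u.s.c.\ modulus $\eta$, so that the aspheric-extension procedure remains uniformly inside $B_\eps(\phi(x))$ across all simplices. Since the nerve has finite dimension, only finitely many extension steps are required and the errors accumulate controllably. For \ref{jmapappr2} the additional subtlety is arranging the cover of $X\times[0,1]$ so that the inductive extension does not disturb the prescribed boundary values $f$ and $g$, which amounts to performing each aspheric extension \emph{relative} to the map already defined on $X\times\{0,1\}$.
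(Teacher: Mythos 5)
The paper does not prove this proposition itself; it cites \cite[Theorems 23.8, 23.9]{G}, and your nerve-and-partition-of-unity strategy is the same one the reference uses. For assertion (i) the shape of the argument is right, but there is a subtlety you gloss over: the $\delta$ in Definition~\ref{aspheric} depends on the aspheric set $A$, not only on the tolerance. Before you can fix the chain $\delta_0<\cdots<\delta_m$ you must know the finitely many sets $\phi(x_\sigma)$ to be used in the simplex-by-simplex extensions, and those depend on the cover; meanwhile, the cover's mesh must in turn be small enough that the vertex assignment lands inside $B_{\delta_0}(\phi(x_\sigma))$ for every $\sigma$. Your sketch presupposes these choices can be made consistently; untangling the order of quantifiers (preliminary cover from the uniform u.s.c.\ modulus, choose the $x_\sigma$, take a finite minimum of the aspheric $\delta$'s working backward from $\eps$, then refine the cover again if necessary) is where most of the work actually sits, and it needs to be written out.

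The more serious gap is in (ii). Definition~\ref{aspheric} gives extensions from $\mathbb{S}^{n-1}$ to $\mathbb{B}^n$, not relative extensions from an arbitrary closed subset, so asserting that you will ``carry out the aspheric extension relative to this boundary'' is not something the definition directly provides. Moreover, the nerve construction passes through a map $\kappa:X\times[0,1]\to|N|$, and the prescribed approximations $f$ and $g$ live on $X$, not on $|N|$; the slice $X\times\{0,1\}$ does not become a subcomplex of $|N|$ on which $f$ and $g$ are already defined, so you cannot simply declare that the construction ``begins with the data $H_0$ already prescribed.'' Arranging for the homotopy $h$ to equal $f$ at $t=0$ and $g$ at $t=1$ exactly — not merely to approximate them — requires a genuinely relative extension principle, or a different device, and that is the real content of the cited Theorem~23.9 which your sketch leaves unaddressed.
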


\noindent
Assertion \ref{jmapappr2} can be described as homotopy-stability of approximations. In our results, we shall need a slightly more general class of set-valued maps:

\begin{definition}\label{cjmap}
A set-valued map $\phi:X\to 2^Y$ is a $CJ$-\emph{map}, if there exist a metric space $Z$, $\psi\in J(X,Z)$, and $k\in C(Z,Y)$ s.t.\ $\phi=k\circ\psi$. The set of $CJ$-maps from $X$ to $Y$ is denoted by $CJ(X,Y)$.
\end{definition}

\noindent
From Propositions \ref{apprusc} and \ref{jmapappr} we clearly have:

\begin{proposition}\label{cjmapappr}
Let $X$ be a compact $ANR$-space, $\phi\in CJ(X,Y)$. Then, \ref{jmapappr1} and \ref{jmapappr2} of Proposition \ref{jmapappr} hold.
\end{proposition}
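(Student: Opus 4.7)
The strategy is to transfer both properties from $\psi$ to $\phi = k \circ \psi$ by composing with the continuous map $k$: Proposition \ref{apprusc}\ref{apprusc2} allows us to turn approximations of the u.s.c.\ map $\psi: X \to 2^Z$ into approximations of $\phi$ at the cost of passing to a smaller $\delta$, while $\psi$ itself already enjoys both properties by Proposition \ref{jmapappr}.

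For \ref{jmapappr1}, fix $\eps > 0$. Apply Proposition \ref{apprusc}\ref{apprusc2} with $\psi$ as the u.s.c.\ map and $k\in C(Z,Y)$ as the continuous post-composition, obtaining $\delta > 0$ such that $k \circ \tilde f \in B_\eps(k \circ \psi) = B_\eps(\phi)$ for every $\tilde f \in B_\delta(\psi)$. Since $\psi$ is approximable by Proposition \ref{jmapappr}\ref{jmapappr1} (which uses that $X$ is a compact $ANR$), $B_\delta(\psi)$ is nonempty, and any $\tilde f \in B_\delta(\psi)$ yields a continuous $\eps$-approximation $k \circ \tilde f$ of $\phi$.

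For \ref{jmapappr2}, given $\eps > 0$ first use the same composition principle to find $\eta > 0$ with $k \circ B_\eta(\psi) \subseteq B_\eps(\phi)$. Then apply Proposition \ref{jmapappr}\ref{jmapappr2} to $\psi$ at level $\eta$, obtaining a threshold $\delta_\eta > 0$ such that any two approximations $\tilde f, \tilde g \in B_\delta(\psi)$ with $\delta < \delta_\eta$ are connected by a homotopy $\tilde h \in C(X \times [0,1], Z)$ staying in $B_\eta(\psi)$. The composition $k \circ \tilde h$ is then a homotopy in $B_\eps(\phi)$ joining $k \circ \tilde f$ and $k \circ \tilde g$, and it depends continuously on $(x,t)$ since $k$ is continuous.

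The subtle point, which I expect to be the main obstacle, is that Proposition \ref{jmapappr}\ref{jmapappr2} as formulated concerns arbitrary pairs $f, g \in B_\delta(\phi)$, whereas the composition argument above directly yields the conclusion only for pairs of the form $k \circ \tilde f, k \circ \tilde g$ coming from approximations of $\psi$. To close this gap one invokes the fact, developed in \cite{G}, that for $\delta$ small enough every $f \in B_\delta(\phi)$ is homotopic within $B_\eps(\phi)$ to such a composed approximation; combining this with the $\psi$-level homotopy via transitivity then gives the general statement.
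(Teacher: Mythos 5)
Your proof of \ref{jmapappr1} is correct and essentially the same as the paper's: both approximate the $J$-map $\psi$ at a finer scale $\delta$ (via Proposition \ref{jmapappr}\ref{jmapappr1}) and post-compose with $k$, the only cosmetic difference being that you invoke the composition transfer of Proposition \ref{apprusc}\ref{apprusc2} as a black box while the paper re-derives it from scratch by a Cantor--Heine uniform-continuity argument on the compact set $\psi(X)$.

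Your diagnosis of the subtlety in \ref{jmapappr2} is accurate, and in fact the paper has exactly the gap you point to: it dismisses this part with ``the argument being analogous,'' but the analogous composition argument --- take a $\psi$-level homotopy $\tilde h$ joining $\tilde f,\tilde g\in B_\delta(\psi)$ and post-compose with $k$ --- only produces a $B_\eps(\phi)$-homotopy between approximations of the special form $k\circ\tilde f$ and $k\circ\tilde g$, not between arbitrary $f,g\in B_\delta(\phi)$. Your proposed patch (homotope each $f\in B_\delta(\phi)$ inside $B_\eps(\phi)$ to a composed approximation, then glue by transitivity) is the right idea, but as written you defer the crucial step to an unnamed ``fact developed in \cite{G}.'' To make this rigorous you would need either to spell out that step explicitly (e.g.\ using local contractibility of the $ANR$ target), or --- cleaner --- to observe that the approximation and homotopy theorems in \cite{G} are formulated for the wider class of admissible/decomposable maps, of which $CJ$-maps are a particular case, so that both conclusions of Proposition \ref{cjmapappr} are direct instances of the cited theorems rather than corollaries requiring a separate composition argument. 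In short: part \ref{jmapappr1} of your proof is fine and matches the paper; for part \ref{jmapappr2} you have correctly located an elision that the paper itself makes, but your own proposal does not yet fill it.
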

\begin{proof}
We prove \ref{jmapappr1}, the argument for \ref{jmapappr2} being analogous. Let $\phi=k\circ\psi$ be as in Definition \ref{cjmap}, and fix $\eps>0$. Since $X$ is compact, so is $\psi(X)\subset Z$. Hence, by Cantor-Heine's theorem we can find $\delta\in(0,\eps]$ s.t.\ for all $z',z''\in\psi(X)$ we have
\[d_Z(z',z'')<\delta \ \Longrightarrow \ d_Y(k(z'),k(z''))<\eps.\]
By Proposition \ref{jmapappr}, there exists $p\in B_\delta(\psi)$. Set $f=k\circ p\in C(X,Y)$. Then, for all $x\in X$ there exists $x'\in B_\delta(x)$ s.t.\ $p(x)\in B_\delta(\psi(x'))$, i.e., there is $z\in\psi(x')$ satisfying $d_Z(p(x),z)<\delta$. By the implication above we have
\[{\rm dist}(f(x),\phi(x'))\le d_Y(k(p(x)),k(z))<\eps,\]
so $f\in B_\eps(\phi)$.
\end{proof}

\section{Degree for multitriples}\label{sec5}

\noindent
In this section we develop a degree theory for set-valued maps, extending Brouwer's degree. This degree has been presented in \cite{BZ} and its construction  basically follows \cite{OZZ}, except for the notion of orientation.  In fact, our approach is based on the notion of orientation for Fredholm maps, introduced in \cite{BF1,BF2} and recalled here in Section \ref{sec3}, while the construction in \cite{OZZ} makes use of the concept of oriented Fredholm structures, introduced in \cite{ET,ET1}. For a comprehensive presentation of degree theory for set-valued maps the reader can see the very rich textbook of M.\ V\"ath \cite{Va}. Throughout this section $E$, $F$ are real Banach spaces and $\Omega\subseteq E$ is an open set.

\begin{definition}\label{triple}
Let $g\in C^1(\Omega,F)$ be an oriented $\Phi_0$-map, $U\subseteq\Omega$ be open, and $\phi\in CJ(\Omega,F)$ be locally compact. $(g,U,\phi)$ is an \emph{admissible (multi)-triple}, if the coincidence set
\[C(g,U,\phi)=\big\{x\in U:\,g(x)\in\phi(x)\big\}\]
is compact.
\end{definition}

\noindent
We construct our degree as an integer-valued function defined on the set of admissible triples. First we assume
\beq\label{fin}
{\rm dim}(\phi(U))<\infty.
\eeq
Since $C(g,U,\phi)$ is compact, we can find an open neighborhood $W\subset U$ of $C(g,U,\phi)$ and a subspace $F_1\subseteq F$ s.t.\ ${\rm dim}(F_1)=m<\infty$, $\phi(U)\subset F_1$ (by virtue of \eqref{fin}), and $F_1$ is transverse to $g$ in $W$ (Definition \ref{transverse}), as it can be seen as follows: given any $x\in C(g,U,\phi)$, take a finite-dimensional subspace $F_{x}$ of $F$ containing $\phi(U)$ and transverse to $g$ at $x$. This is possible since $Dg(x)$ is Fredholm. By the continuity of $z\mapsto Dg(z)$, there exists a neighborhood $W_{x}$ of $x$ in $U$ s.t.\ $g$ is transverse to $F_{x}$ at any $z\in W_{x}$. Then, $F_{1}$ and $W$ as above are obtained by the compactness of  $C(g,U,\phi)$.
\vskip2pt
\noindent
We orient $F_1$ and set $M=g^{-1}(F_1)$, hence $M$ is an orientable $C^1$-manifold in $E$ with ${\rm dim}(M)=m$. We then orient $M$ so that it is an oriented $(\Phi_0,g)$-preimage of $F_1$ (Definition \ref{fredorpreim}). Then $C(g,U,\phi)\subset M$ is compact even as a subset of $M$, and the following open covering of $C(g,U,\phi)$ exists:

\begin{lemma}\label{cover}
Let $(g,U,\phi)$ be an admissible triple satisfying \eqref{fin}, $W$, $F_1$, and $M$ be defined as above. Then, there exist $k\in\N$, and bounded open sets $V_1,\ldots V_k\subset M$ s.t.\
\begin{enumroman}
\item\label{cover1} $\overline{V}_j\subset M$, $j=1,\ldots k$ (by $\overline{V}_j$ we denote the closure of $V_j$ in $E$);
\item\label{cover2} $\displaystyle C(g,U,\phi)\subset V:=\cup_{j=1}^k V_j$;
\item\label{cover3} $\overline{V}_j$ is diffeomorphic to a closed convex subset of $\R^m$, $j=1,\ldots k$.
\end{enumroman}
\end{lemma}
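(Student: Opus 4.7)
The plan is to exploit the local Euclidean structure of the $m$-dimensional embedded $C^1$-submanifold $M$ of $E$ to cover the compact coincidence set $C(g,U,\phi)$ by preimages of small open balls under local charts. The desired behaviour of closures in $E$ will follow because each chart is a homeomorphism onto its image in $\R^m$, and the $E$-closure of such a chart-ball coincides with the chart-preimage of the corresponding closed Euclidean ball, which is compact in $E$ and thus closed there.

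Concretely, for each $x_0\in C(g,U,\phi)\subset M$, I would invoke the submanifold structure guaranteed by Proposition \ref{fredpreimage} (and the transversality of $F_1$ to $g$ on $W$) to obtain a $C^1$-chart $\psi_{x_0}\colon U_{x_0}\to\R^m$, where $U_{x_0}\subset M\cap W$ is a neighbourhood of $x_0$ in the subspace topology inherited from $E$ and $\psi_{x_0}(U_{x_0})\subset\R^m$ is open. I would then pick an open ball $B_{x_0}\subset\R^m$ centred at $\psi_{x_0}(x_0)$ whose Euclidean closure $\overline{B_{x_0}}$ lies in $\psi_{x_0}(U_{x_0})$, and set $V_{x_0}:=\psi_{x_0}^{-1}(B_{x_0})$, an open subset of $M$. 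The key identity to verify is
\[
\overline{V_{x_0}}=\psi_{x_0}^{-1}(\overline{B_{x_0}}),
\]
where the left-hand closure is taken in $E$. One inclusion is immediate, since $\psi_{x_0}^{-1}(\overline{B_{x_0}})$ is a continuous image of a compact set, hence compact and $E$-closed; the reverse follows since any $y\in\psi_{x_0}^{-1}(\overline{B_{x_0}})$ is the $E$-limit of $\psi_{x_0}^{-1}(b_n)\in V_{x_0}$ for any sequence $b_n\to\psi_{x_0}(y)$ in $B_{x_0}$, using continuity of $\psi_{x_0}^{-1}$ into $E$.

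Granted this identity, $\overline{V_{x_0}}\subset M$, $V_{x_0}$ is bounded, and $\overline{V_{x_0}}$ is $C^1$-diffeomorphic via $\psi_{x_0}$ to the closed convex set $\overline{B_{x_0}}\subset\R^m$, delivering conditions \ref{cover1} and \ref{cover3} for each candidate. A finite subcover $V_1,\dots,V_k$ extracted from the family $\bigl\{V_{x_0}\bigr\}_{x_0\in C(g,U,\phi)}$ by compactness of $C(g,U,\phi)$ then also yields \ref{cover2}.

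The only genuine difficulty is the bookkeeping between the closure of $V_{x_0}$ in $M$ and its closure in the ambient space $E$. This reduces, however, to the fact that $M$ is an embedded (not merely immersed) submanifold of $E$, so each chart $\psi_{x_0}$ is a homeomorphism onto an open subset of $\R^m$ whose inverse is continuous as a map into $E$; once this is recognised, the identification above becomes automatic.
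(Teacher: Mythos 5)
The paper states Lemma \ref{cover} without proof, treating it as a routine consequence of the submanifold structure of $M$. Your argument is correct and is exactly the standard proof the paper tacitly relies on: cover the compact set $C(g,U,\phi)$ by chart preimages of small open Euclidean balls, verify that the $E$-closure of such a preimage coincides with the preimage of the corresponding closed ball (so that it is compact, contained in $M$, and $C^1$-diffeomorphic to a closed convex subset of $\R^m$), and extract a finite subcover by compactness. The one genuinely delicate point --- reconciling the closure in $E$ with the closure in $M$ --- you handle correctly: $\psi_{x_0}^{-1}(\overline{B_{x_0}})$ is compact, hence $E$-closed, which gives one inclusion, and the approximation of every point of it by points of $V_{x_0}$, via continuity of $\psi_{x_0}^{-1}$ into $E$, gives the other. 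The only cosmetic remark is that in the paper's construction $M$ should really be understood as $(g|_W)^{-1}(F_1)$ since transversality is only guaranteed on $W$; but since all your charts are taken near points of $C(g,U,\phi)\subset W$, this does not affect your argument.
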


\noindent
By \ref{cover3}, $\overline{V}_1,\ldots \overline{V}_k,\overline{V}$ are compact $ANR$-spaces. So, Lemma \ref{jmapsuff} implies that $\restr{\phi}{\overline{V}}\in CJ(\overline{V},F_1)$. Thus, by Proposition \ref{cjmapappr}, $\restr{\phi}{\overline{V}}$ is approximable. Recalling also that $C(g,U,\phi)$ and $\phi(\partial V)$ are compact sets (since $\phi$ is u.s.c.), we can find $\eps>0$ s.t.\ for all $f\in B_\eps(\restr{\phi}{\overline{V}})$ we have
\[{\rm dist}\big(0,(g-f)(\partial V)\big)>0.\]
So, Brouwer's degree for the triple $(\restr{g}{\overline{V}}-f,V,0)$ is well defined and it enjoys the reduction property displayed in Proposition \ref{reduction}. Now we prove that such degree is invariant:

\begin{lemma}\label{invfin}
Let $(g,U,\phi)$ be an admissible triple satisfying \eqref{fin}, $F_1$, $V$, $f$ be defined as above. Then, ${\rm deg}_B(\restr{g}{\overline{V}}-f,V,0)$ does not depend on $F_1$, $V$, and $f$.
\end{lemma}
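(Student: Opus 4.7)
For independence from $f$, fix $F_1$, $W$, $V$, and $M$. Given two sufficiently fine approximations $f_0,f_1\in B_\varepsilon(\restr{\phi}{\overline V})$, Proposition \ref{cjmapappr}\ref{jmapappr2} produces a continuous homotopy $h\in C(\overline V\times[0,1],F_1)$ from $f_0$ to $f_1$ whose sections $h(\cdot,t)$ are all approximations of $\restr{\phi}{\overline V}$ with error at most a prescribed $\eta>0$. Since $C(g,U,\phi)\subset V$ is compact and $\phi$ is u.s.c., the set $(g-\phi)(\partial V)$ has positive distance from $0$, so for $\eta$ sufficiently small $g-h(\cdot,t)$ does not vanish on $\partial V$ for any $t\in[0,1]$. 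Homotopy invariance of Brouwer's degree on the oriented manifold $M$ then yields ${\rm deg}_B(\restr{g}{\overline V}-f_0,V,0)={\rm deg}_B(\restr{g}{\overline V}-f_1,V,0)$.

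For independence from $V$, still fix $F_1$ and let $V,V'$ be two admissible neighborhoods produced by Lemma \ref{cover}. Their intersection contains $C(g,U,\phi)$, so a standard refinement yields an open $V''\subseteq V\cap V'$ meeting the conditions of Lemma \ref{cover}. For a sufficiently fine approximation $f$ (with comparison of different approximations absorbed into the previous step), every zero of $\restr{g}{\overline V}-f$ lies in a prescribed neighborhood of $C(g,U,\phi)$ and hence in $V''$. The excision property of Brouwer's degree then gives ${\rm deg}_B(\restr{g}{\overline V}-f,V,0)={\rm deg}_B(\restr{g}{\overline{V''}}-f,V'',0)$, and symmetrically for $V'$.

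Independence from $F_1$ is the subtlest step. Given two admissible choices $F_1,F_1'$, set $F_1''=F_1+F_1'$, so ${\rm dim}(F_1'')<\infty$ and $\phi(U)\subseteq F_1\subseteq F_1''$. By openness of transversality and compactness of $C(g,U,\phi)$ we may shrink $W$ so that $F_1''$ is transverse to $g$ on $W$. Endow $F_1''$ with an orientation extending that of $F_1$ and orient $M_1''=g^{-1}(F_1'')$ as the $(\Phi_0,g)$-preimage of $F_1''$; choose $V''$ as in Lemma \ref{cover} relative to $M_1''$. The restriction $\restr{g}{M_1''}$ is transverse to $F_1\subseteq F_1''$, and since $f$ takes values in $F_1$ the difference $(\restr{g}{V''}-f)-\restr{g}{V''}=-f$ has image in $F_1$. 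Proposition \ref{reduction} then yields
\[{\rm deg}_B(\restr{g}{\overline{V''}}-f,\,V'',\,0)={\rm deg}_B(\restr{(g-f)}{V''\cap M_1},\,V''\cap M_1,\,0).\]
The same argument with $F_1'$ in place of $F_1$ gives the analogous identity, and combined with the invariance from the first two steps the two degrees coincide. The main obstacle lies in the orientation bookkeeping: one must verify that orienting $M_1$ directly as a $(\Phi_0,g)$-preimage of $F_1$ agrees with the orientation obtained by first orienting $M_1''$ and then applying Lemma \ref{backorient} to $\restr{g}{M_1''}$ transverse to $F_1$. This rests on the block form \eqref{corrbm} of correctors, which allows a single positive corrector for $Dg(x)$ to produce both orientations simultaneously.
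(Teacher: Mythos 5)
Your proposal follows the same three-step structure as the paper's proof (independence from $f$ via homotopy of approximations, from $V$ via excision, from $F_1$ via the reduction property), and all three steps are carried out correctly. Two small comments. First, in the step on independence from $V$ you pass through an intersection $V''\subseteq V\cap V'$ and argue that for a fine enough $f$ the zero set of $g-f$ is trapped near $C(g,U,\phi)$; the paper instead assumes WLOG $V'\subset V''$ and invokes Proposition \ref{apprusc}\ref{apprusc1} to restrict a common approximation, but the two arguments are interchangeable variants of the same excision step. Second, in the step on independence from $F_1$, your remark that $F_1''=F_1+F_1'$ must be transverse "by openness of transversality and compactness" is unnecessary: since $F_1\subseteq F_1''$, transversality of $F_1$ to $g$ on $W$ immediately gives $\mathrm{im}\,Dg(x)+F_1''\supseteq\mathrm{im}\,Dg(x)+F_1=F$, so $F_1''$ is transverse wherever $F_1$ is, with no shrinking of $W$ required. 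Your explicit flagging of the orientation bookkeeping (the compatibility between orienting $M_1$ directly as a $(\Phi_0,g)$-preimage of $F_1$ versus first orienting $M_1''$ and then reducing) is a point the paper's step $(c)$ leaves entirely implicit; the block form \eqref{corrbm} is indeed the right mechanism, though a full check would trace through Proposition \ref{fredpreimage} for both routes. Overall, your proof is a correct and somewhat more explicit rendering of the paper's argument.
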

\begin{proof}
We prove our assertion in three steps (backward):
\begin{itemize}[leftmargin=1cm]
\item[$(a)$] Let $F_1$, $V$ be fixed, and $f',f''\in B_\eps(\restr{\phi}{\overline{V}})$ be two approximations of $\phi$. By homotopy invariance of Brouwer's degree and Proposition \ref{cjmapappr}, by reducing $\eps>0$ if necessary we can apply \cite[Lemma 3.4]{OZZ} and get
\[{\rm deg}_B(\restr{g}{\overline{V}}-f',V,0)={\rm deg}_B(\restr{g}{\overline{V}}-f'',V,0).\]
\item[$(b)$] Let $F_1$ be fixed, $V',V''\subset M$ be open s.t.\ $C(g,U,\phi)\subset V'\cap V''$ and $\overline{V'}$, $\overline{V''}$ are compact $ANR$-spaces. Without loss of generality we may assume $V'\subset V''$. By Proposition \ref{apprusc} \ref{apprusc1}, by reducing $\eps>0$ if necessary we can find $f\in B_\eps(\restr{\phi}{\overline{V''}})$ s.t.\ $\restr{f}{\overline{V'}}\in B_\eps(\restr{\phi}{\overline{V'}})$. So, by the excision property of Brouwer's degree, we have
\[{\rm deg}_B(\restr{g}{\overline{V'}}-\restr{f}{\overline{V'}},V',0)={\rm deg}_B(\restr{g}{\overline{V''}}-f,V'',0).\]
\item[$(c)$] Finally, let $F'_1$, $F''_1$ be finite-dimensional subspaces of $F$, transverse to $g$ in $W$, s.t.\ $\phi(U)\subset F'_1\cap F''_1$. Then, by Proposition \ref{reduction} we have for any choice of $V$, $f$ the same ${\rm deg}_B(\restr{g}{\overline{V}}-f,V,0)$.
\end{itemize}
So, ${\rm deg}_B(\restr{g}{\overline{V}}-f,V,0)$ is independent of $F_1$, $V$, and $f$.
\end{proof}

\noindent
By virtue of Lemma \ref{invfin}, we can define a degree for the triple $(g,U,\phi)$:

\begin{definition}\label{degfin}
Let $(g,U,\phi)$ be an admissible triple satisfying \eqref{fin}, $F_1$, $V$, $f$ be defined as above. The \emph{degree} of $(g,U,\phi)$ is defined by
\[{\rm deg}(g,U,\phi)={\rm deg}_B(\restr{g}{\overline{V}}-f,V,0).\]
\end{definition}

\noindent
The following is a special homotopy invariance result, which will be useful in the forthcoming construction:

\begin{lemma}\label{homo}
Let $U\subseteq E$ be open, $h:U\times[0,1]\to F$ be an oriented $\Phi_0$-homotopy, $\phi\in CJ(U\times[0,1],F)$ be locally compact s.t.\
\begin{enumroman}
\item\label{homo1} the coincidence set
\[C(h,U\times[0,1],\phi)=\big\{(x,t)\in U\times[0,1]:\,h(x,t)\in\phi(x,t)\big\}\]
is compact;
\item\label{homo2} ${\rm dim}(\phi(U\times[0,1]))<\infty$.
\end{enumroman}
Then, the map $t\mapsto{\rm deg}(h(\cdot,t),U,\phi(\cdot,t))$ is constant in $[0,1]$.
\end{lemma}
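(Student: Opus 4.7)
The plan is to show that the map $t \mapsto \deg(h(\cdot,t), U, \phi(\cdot,t))$ is locally constant on $[0,1]$, which suffices by connectedness. Fix $t_0 \in [0,1]$. By hypothesis \ref{homo2}, I would choose a finite-dimensional subspace $F_1 \subseteq F$ containing $\phi(U \times [0,1])$; enlarging $F_1$ if necessary, I would arrange that $F_1$ is transverse to $h(\cdot, t_0)$ at every point of the compact slice $K_{t_0} := C(h, U\times[0,1], \phi) \cap (U \times \{t_0\})$. Using continuity of the map $(x,t) \mapsto D_x h(x,t) \in \Phi_0(E,F)$, compactness of $K_{t_0}$, and compactness of the full coincidence set, one then finds a bounded open $W \subseteq U$ and $\delta > 0$ such that, with $I := [t_0-\delta, t_0+\delta] \cap [0,1]$: (a) $F_1$ is transverse to $h(\cdot,t)$ on $W$ for every $t \in I$; and (b) every slice $K_t := C(h, U\times[0,1], \phi) \cap (U \times \{t\}) \subseteq W$ (the latter by a standard compactness argument on the tube of coincidences). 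Orienting $F_1$ and transporting the orientation of $h$ via Proposition \ref{transport}, each fiber $M_t := h(\cdot,t)^{-1}(F_1) \cap W$ becomes an oriented $(\Phi_0, h(\cdot,t))$-preimage of $F_1$ in the sense of Definition \ref{fredorpreim}.

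Next I would introduce the tube
\[\mathcal{M} := \{(x,t) \in W \times I : h(x,t) \in F_1\},\]
which, by a fiberwise implicit function argument (using $C^1$-regularity of $h(\cdot, t)$, continuity of $D_x h$ in $(x,t)$, and transversality), is locally the graph of a continuous function from a piece of $M_{t_0} \times I$ into a complement in $E$; hence $\mathcal{M}$ is a topological fibration over $I$ with $C^1$-fibers $M_t$. Applying a Lemma \ref{cover}-type construction uniformly along $I$, I would select a compact set $\mathcal{N} \subseteq \mathcal{M}$ whose fiber $\mathcal{N}_t$ is a finite union of compact $m$-dimensional sets each diffeomorphic to a convex subset of $\R^m$, with $K_t$ contained in the relative interior of $\mathcal{N}_t$ for every $t \in I$. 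Then $\mathcal{N}$ is a compact $ANR$, so $\phi|_{\mathcal{N}} \in CJ(\mathcal{N}, F_1)$ is approximable by Proposition \ref{cjmapappr}: for every small $\eps > 0$ there is $f \in C(\mathcal{N}, F_1)$ with $f \in B_\eps(\phi|_{\mathcal{N}})$.

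By compactness of the coincidence set and upper semi-continuity of $\phi$, for $\eps$ small enough the map $(x,t) \mapsto h(x,t) - f(x,t)$ does not vanish on the side boundary of $\mathcal{N}$, namely on $\partial\mathcal{N} \setminus (\mathcal{N}_{t_0-\delta} \cup \mathcal{N}_{t_0+\delta})$, and Proposition \ref{apprusc} \ref{apprusc3} ensures that each restriction $f(\cdot,t)$ is a valid $\eps$-approximation of $\phi(\cdot,t)$. Consequently Definition \ref{degfin} yields
\[\deg(h(\cdot,t), U, \phi(\cdot,t)) = \deg_B\bigl(h(\cdot,t)|_{\mathcal{N}_t} - f(\cdot,t),\ \mathrm{int}(\mathcal{N}_t),\ 0\bigr) \quad \text{for every } t \in I.\]
To conclude I would translate the tube structure into local $C^0$-homotopies of maps on a single finite-dimensional domain via the graph representation of $\mathcal{M}$ supplied by the implicit function theorem, and then invoke the classical Brouwer homotopy invariance, patched across the finitely many charts making up $\mathcal{N}$ by excision. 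The hard part I expect is exactly this synchronization step: because the fibers $M_t$ genuinely vary with $t$, one cannot directly homotope maps between them, and the role of the tube $\mathcal{M}$ together with its local graph trivializations is precisely to reduce the problem to a standard finite-dimensional parameterized Brouwer degree argument, handling with care the orientation consistency across fibers granted by Proposition \ref{transport}.
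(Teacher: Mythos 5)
Your proposal recognizes the paper's skeleton: pass to a finite-dimensional $F_1$ containing $\phi(U\times[0,1])$ and transverse to the homotopy, form the tube $\mathcal{M}$ (the paper's $M_1 = h^{-1}(F_1)\cap W$, an $(m+1)$-dimensional $C^1$-manifold with boundary in $E\times\R$), pick a compact ANR neighborhood $\overline V$ of the coincidence set inside it, approximate $\phi$ on $\overline V$ by a single-valued $f$ via Propositions \ref{apprusc} and \ref{cjmapappr}, and reduce each slice multi-degree to a Brouwer degree by Definition \ref{degfin}. Where you diverge, and where a genuine gap opens, is exactly the step you single out as hard. You propose to locally trivialize $\mathcal{M}$ via the implicit function theorem, transfer the slice degrees to a fixed finite-dimensional domain, apply the product form of Brouwer homotopy invariance, and patch across charts by excision. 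But the difficulty you are trying to circumvent does not exist: Brouwer's degree on manifolds already satisfies the generalized (non-product) homotopy invariance -- for a continuous map on a compact oriented $(m+1)$-manifold with boundary into $\R^m$, with compact zero set contained in the interior, the slice degrees are constant -- and this is precisely what the paper invokes in a single sentence after Definition \ref{degfin}, with no fibration structure required. As written, your version is incomplete on two counts: the fibration claim for $\mathcal{M}$ is an Ehresmann-type assertion (the projection to the $t$-axis would need to be a proper submersion, which you never verify), and the chart-patching argument by excision is only gestured at. A secondary divergence is that you localize in $t$ around a point $t_0$ and would then cover $[0,1]$; the paper instead selects $F_1$ and the tube neighborhood simultaneously for all $t$ using compactness of the full coincidence set, so the argument runs globally from the outset.
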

\begin{proof}
By \ref{homo1}, \ref{homo2} we can find an open neighborhood $W\subset U\times[0,1]$ of $C(h,U\times[0,1],\phi)$ and a subspace $F_1\subseteq F$ s.t.\ ${\rm dim}(F_1)=m<\infty$, $\phi(U\times[0,1])\subset F_1$, and for all $t\in[0,1]$ $F_1$ is transverse to $h(\cdot,t)$ in the set
\[W_t:=\big\{x\in U:\,(x,t)\in W\big\}.\]
Set $M_1=h^{-1}(F_1)\cap W$, then $M_1$ is a $(m+1)$-dimensional $C^1$-mainfold in $E\times\R$ with boundary
\[\partial M_1=\big\{(x,t)\in M_1:\,t=0,1\big\}.\]
We orient $F_1$, so that the orientations of $h$, $F_1$ induce an orientation of $M_1$ in a unique way (Proposition \ref{transport}). Now let $V\subset M_1$ be an open (in $M_1$) neighborhood of $C(h,U\times[0,1],\phi)$, s.t.\ $\overline{V}\subset M_1$ is a compact $ANR$-space (the construction is analogous to that of Lemma \ref{cover}). By Propositions \ref{apprusc}, \ref{cjmapappr} the restriction $\restr{\phi}{\overline{V}}\in CJ(\overline{V},F_1)$ is approximable, and for all $\eps>0$ small enough we can find $f\in B_\eps(\restr{\phi}{\overline{V}})$ s.t.\ for all $t\in[0,1]$
\[{\rm deg}(h(\cdot,t),U,\phi(\cdot,t))={\rm deg}_B(\restr{h(\cdot,t)}{\overline{V}}-f(\cdot,t),V,0)\]
(Definition \ref{degfin}). By homotopy invariance of Brouwer's degree, the latter does not depend on $t\in[0,1]$, which concludes the proof.
\end{proof}

\noindent
Now we remove assumption \eqref{fin}. Let $(g,U,\phi)$ be an admissible triple, not necessarily satisfying \eqref{fin}. Since $g$ is locally proper, $\phi$ is locally compact, and $C(g,U,\phi)$ is compact (Definition \ref{triple}), we can find a bounded open neighborhood $U_1\subset U$ of $C(g,U,\phi)$ s.t.\ $\restr{g}{\overline{U}_1}$ is proper and $\restr{\phi}{\overline{U}_1}$ is compact. It is easily seen that $g-\phi:\overline{U}_1\to 2^F$ is a closed set-valued map, and $0\notin(g-\phi)(\partial U_1)$. Since $(g-\phi)(\partial U_1)$ is closed, there exists $\delta>0$ s.t.\
\[B_\delta(0)\cap(g-\phi)(\partial U_1)=\emptyset.\]
The set $K=\overline{\phi(\overline{U}_1)}$ is compact. So we can find a finite-dimensional subspace $F_1\subset F$ and a (single-valued) map $j_\delta\in C(K,F_1)$ s.t.\ for all $x\in K$
\[\|j_\delta(x)-x\|_F<\frac{\delta}{2}\]
(this is a classical result in nonlinear functional analysis, see e.g.\ \cite[Proposition 8.1]{D}). Set $\phi_1=j_\delta\circ\phi\in CJ(\overline{U}_1,F)$ (Definition \ref{cjmap}), moreover it satisfies
\[B_{\delta/2}(0)\cap\phi_1(\partial U_1)=\emptyset,\]
and $C(g,U_1,\phi_1)$ is compact. So, $(g,U_1,\phi_1)$ is an admissible triple satisfying \eqref{fin}. Definition \ref{degfin} then applies, and produces a degree ${\rm deg}(g,U_1,\phi_1)$. Moreover, such degree is invariant:

\begin{lemma}\label{inv}
Let $(g,U,\phi)$ be an admissible triple, $U_1$, $j_\delta$ be defined as above. Then, ${\rm deg}(g,U_1,\phi_1)$ does not depend on $U_1$, $j_\delta$.
\end{lemma}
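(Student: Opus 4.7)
The plan is to establish the invariance in two steps. Step 1 fixes $U_1$ and varies the approximation $j_\delta$ (together with $\delta$); Step 2 varies $U_1$ and reduces to Step 1 via excision. The main tools are the homotopy invariance in Lemma \ref{homo} and excision for the finite-dimensional degree of Definition \ref{degfin}, inherited from Brouwer's degree on the preimage manifold.

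\emph{Step 1 (fixed $U_1$).} Given two admissible pairs $(\delta_0, j^0)$ and $(\delta_1, j^1)$, with each $j^i$ taking values in a finite-dimensional subspace $F_1^i \subset F$, set $\delta^* = \max\{\delta_0, \delta_1\}$; since each $\delta_i \le {\rm dist}(0, (g-\phi)(\partial U_1))$, so does $\delta^*$. The linear interpolation $j_t = (1-t)j^0 + t j^1$ has image in $F_1^0 + F_1^1$ and satisfies $\|j_t(y) - y\|_F < \delta^*/2$ throughout. Writing $\phi = k \circ \psi$ as in Definition \ref{cjmap}, the set-valued map $\Phi(x,t) = j_t(\phi(x))$ factors as $\tilde k \circ \tilde\psi$, where $\tilde \psi(x,t) = \psi(x) \times \{t\}$ is a $J$-map (values being aspheric in $Z \times [0,1]$, as products of aspheric sets with a singleton) and $\tilde k(z,t) = j_t(k(z))$ is continuous; hence $\Phi \in CJ(\overline{U}_1 \times [0,1], F)$. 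Take the constant $\Phi_0$-homotopy $h(x,t) = g(x)$. Any coincidence point $(x,t)$ satisfies ${\rm dist}(0,(g-\phi)(x)) \le \|j_t(y) - y\|_F < \delta^*/2$ for a witness $y \in \phi(x)$, which rules out $x \in \partial U_1$; combined with the properness of $\restr{g}{\overline{U}_1}$ and boundedness of $\phi(\overline{U}_1)$, this makes the coincidence set compact. Since $\Phi$ has finite-dimensional range, Lemma \ref{homo} gives ${\rm deg}(g, U_1, j^0 \circ \phi) = {\rm deg}(g, U_1, j^1 \circ \phi)$.

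\emph{Step 2 (varying $U_1$).} Let $(U_1^i, \delta_i, j^i)$, $i = 0,1$, be two admissible choices. Fix a bounded open neighborhood $W$ of the compact set $C(g,U,\phi)$ with $\overline{W} \subset U_1^0 \cap U_1^1$. A sequential compactness argument yields $\eta > 0$ such that ${\rm dist}(g(x), \phi(x)) \ge \eta$ for all $x \in \overline{U_1^0 \cup U_1^1} \setminus W$: otherwise, picking $x_n \in \overline{U_1^0 \cup U_1^1} \setminus W$ and $y_n \in \phi(x_n)$ with $\|g(x_n) - y_n\|_F \to 0$, boundedness of $\phi$ on this closure plus properness of $g$ would produce a subsequential limit $x \notin W$ with $g(x) \in \phi(x)$, contradicting $C(g,U,\phi) \subset W$. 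Choose $\delta \in (0, \min\{\delta_0, \delta_1, 2\eta\}]$ and a single approximation $\tilde j$ on $\overline{\phi(\overline{U_1^0 \cup U_1^1})}$ with $\|\tilde j(y) - y\|_F < \delta/2$; set $\tilde\phi_1 = \tilde j \circ \phi$. By Step 1 applied in each $U_1^i$, ${\rm deg}(g, U_1^i, j^i \circ \phi) = {\rm deg}(g, U_1^i, \tilde\phi_1)$. Every coincidence point of $(g, U_1^i, \tilde\phi_1)$ satisfies ${\rm dist}(g(x),\phi(x)) < \delta/2 \le \eta$, hence lies in $W$; therefore $C(g, U_1^i, \tilde\phi_1) = C(g, W, \tilde\phi_1)$, and excision on the common preimage manifold $g^{-1}(F_1) \supset \tilde j(\phi(\cdot))$ yields ${\rm deg}(g, U_1^i, \tilde\phi_1) = {\rm deg}(g, W, \tilde\phi_1)$ for $i = 0,1$. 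The common right-hand side concludes the proof.

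\emph{Main obstacle.} The critical point is the localization in Step 2: confining the coincidence points of the approximate problem to a common neighborhood $W \subset U_1^0 \cap U_1^1$. This hinges on the uniform gap estimate $\eta > 0$, which combines u.s.c.\ of $\phi$, boundedness of its image on $\overline{U_1^0 \cup U_1^1}$, and properness of $g$ on bounded sets. Once $\delta < 2\eta$ is fixed, excision reduces both degrees to the single quantity ${\rm deg}(g, W, \tilde\phi_1)$, and Step 1 absorbs the remaining freedom in the approximation.
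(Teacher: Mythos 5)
Your proof is correct and follows essentially the same two-step route as the paper's: Step 1 (fixed $U_1$, vary $j_\delta$) uses a constant $\Phi_0$-homotopy, the same explicit $CJ$-factorization $\tilde\phi=\tilde k\circ\tilde\psi$, and Lemma \ref{homo}, while Step 2 (vary $U_1$) reduces both degrees to a common subdomain and applies excision. The quantitative gap estimate $\eta>0$ you derive to localize the approximate coincidence set merely makes explicit what the paper leaves implicit after its WLOG reduction $U'_1\subseteq U''_1$; this is a refinement of the same argument, not a different route.
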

\begin{proof}
Just as in Lemma \ref{invfin}, we divide the proof in two steps backward:
\begin{itemize}[leftmargin=1cm]
\item[$(a)$] Let $U_1$ be fixed, and $F_1$, $K$, $\delta$ be defined as above, and let $j'_\delta,j''_\delta\in C(K,F_1)$ be s.t.\ for all $x\in K$
\[\|j'_\delta(x)-x\|_F,\,\|j''_\delta(x)-x\|_F<\frac{\delta}{2}.\]
Set for all $(x,t)\in U_1\times[0,1]$
\[h(x,t)=g(x),\ \tilde\phi(x,t)=(1-t)j'_\delta(\phi(x))+tj''_\delta(\phi(x)).\]
Then $h:U_1\times[0,1]\to F$ is a $\Phi_0$-homotopy (Definition \ref{fredhom}). A more delicate question is proving that $\tilde\phi\in CJ(U_1\times[0,1],F)$, since this map is not explicitly defined as a composition of a $J$-map and a continuous single-valued function (Definition \ref{cjmap}). Since $\phi\in CJ(U,F)$, there exist a metric space $Z$, $\psi\in J(U_1,Z)$, and $k\in C(Z,F)$, s.t.\ $\phi=k\circ\psi$. Set for all $(x,t)\in U_1\times[0,1]$
\[\tilde\psi(x,t)=\psi(x)\times\{t\},\]
so clearly $\tilde\psi\in J(U_1\times[0,1],Z\times[0,1])$. Then set for all $(z,t)\in Z\times[0,1]$
\[\tilde k(z,t)=(1-t)j'_\delta(k(z))+tj''_\delta(k(z)),\]
so $\tilde k\in C(Z\times[0,1],F)$. Then,
\[\tilde\phi=\tilde k\circ\tilde\psi\in CJ(U_1\times[0,1],F).\]
Now we prove that the coincidence set
\[C(h,U_1\times[0,1],\tilde\phi)=\big\{(x,t)\in U_1\times[0,1]:\,h(x,t)\in\tilde\phi(x,t)\big\}\]
is compact. Let $(x_n,t_n)$ be a sequence in $C(h,U_1\times[0,1],\tilde\phi)$. Passing to a subsequence, we have $t_n\to t$. For all $n\in\N$ there exist $y'_n,y''_n\in\phi(x_n)$ s.t.\
\[g(x_n)=(1-t_n)j'_\delta(y'_n)+t_n j''_\delta(y''_n).\]
By compactness of $\restr{\phi}{\overline{U}_1}$, passing again to a subsequence we have $y'_n\to y'$, $y''_n\to y''$, which implies
\[g(x_n)\to (1-t)j'_\delta(y')+tj''_\delta(y'').\]
By properness of $\restr{g}{\overline{U}_1}$, we can find $x\in\overline{U}_1$ s.t.\ up to a further subsequence $x_n\to x$. We need to prove that $x\in U_1$. Arguing by contradiction, let $x\in\partial U_1$. Then, by the choice of $\delta>0$ we have
\[{\rm dist}(g(x),\phi(x))\ge\delta.\]
Besides, since $\phi$ is u.s.c.\ we have $y',y''\in\phi(x)$, hence by the metric properties of the maps $j'_\delta$, $j''_\delta$ we have
\begin{align*}
{\rm dist}(g(x),\phi(x)) &\le (1-t)\,{\rm dist}(j'_\delta(y'),\phi(x))+t\,{\rm dist}(j''_\delta(y''),\phi(x)) \\
&\le (1-t)\|j'_\delta(y')-y'\|_F+t\|j''_\delta(y'')-y''\|_F \le \frac{\delta}{2},
\end{align*}
a contradiction. So, $x\in U_1$ and we deduce that $C(h,U_1\times[0,1],\tilde\phi)$ is compact. Moreover, $\tilde\phi$ has a finite-dimensional rank. Then, by Lemma \ref{homo}, ${\rm deg}(h(\cdot,t),U_1,\tilde\phi(\cdot,t))$ is independent of $t\in[0,1]$. In particular, taking $t=0,1$ we get
\[{\rm deg}(g,U_1,j'_\delta\circ\phi)={\rm deg}(g,U_1,j''_\delta\circ\phi).\]
\item[$(b)$] Let $U'_1,U''_1\subset U$ be open neighborhoods of $C(g,U,\phi)$ s.t.\ the restrictions of $g$ to $\overline{U'}_1$, $\overline{U''}_1$ are proper and the restrictions of $\phi$ to $\overline{U'}_1$, $\overline{U''}_1$ are compact, respectively. Without loss of generality we may assume $U'_1\subseteq U''_1$, hence we continue the construction in $U''_1$. Then, independence of the degree follows from the excision property of Brouwer's degree.
\end{itemize}
So, ${\rm deg}(g,U_1,\phi_1)$ does not depend on the choice of $U_1$, $j_\delta$.
\end{proof}

\noindent
By virtue of Lemma \ref{inv}, we can define a degree for the triple $(g,U,\phi)$ extending Definition \ref{degfin}:

\begin{definition}\label{deg}
Let $(g,U,\phi)$ be an admissible triple, and $U_1$, $\phi_1$ be defined as above. The \emph{degree} of $(g,U,\phi)$ is defined by
\[{\rm deg}(g,U,\phi)={\rm deg}(g,U_1,\phi_1).\]
\end{definition}

\noindent
The degree theory we just introduced enjoys some classical properties:

\begin{proposition}\label{degprop}
The following properties hold:
\begin{enumroman}
\item\label{degprop1} (normalization) if $U\subset E$ is open s.t.\ $0\in U$, and $I$ is the naturally oriented identity of $E$, then
\[{\rm deg}(I,U,0)=1;\]
\item\label{degprop2} (domain additivity) if $(g,U,\phi)$ is an admissible triple, $U_1,\,U_2\subset U$ are open s.t.\ $U_1\cap U_2=\emptyset$, $C(g,U,\phi)\subset U_1\cup U_2$, then $(g,U_1,\phi)$, $(g,U_2,\phi)$ are admissible triples and
\[{\rm deg}(g,U,\phi)={\rm deg}(g,U_1,\phi)+{\rm deg}(g,U_2,\phi);\]
\item\label{degprop3} (homotopy invariance) if $U\subset E$ is open, $h:U\times[0,1]\to F$ is an oriented $\Phi_0$-homotopy, $\phi\in CJ(U\times[0,1],F)$ is locally compact s.t.\ $C(h,U\times[0,1],\phi)$ is compact, then for all $t\in[0,1]$ $(h(\cdot,t),U,\phi(\cdot,t))$ is an admissible triple and the function
\[t\mapsto{\rm deg}(h(\cdot,t),U,\phi(\cdot,t))\]
is constant in $[0,1]$.
\end{enumroman}
\end{proposition}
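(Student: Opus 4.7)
The plan is to verify the three properties in turn by reducing each to the finite-dimensional case already established, using the definitions and lemmas of this section together with the corresponding properties of Brouwer's degree.

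\textbf{Normalization.} For $(I, U, 0)$ the coincidence set is $\{0\}$, hence $U_1$ in Definition \ref{deg} may be taken as any bounded open neighborhood of $0$ with $\overline{U}_1 \subset U$, and the finite-rank approximation is $\phi_1 = 0$, so assumption \eqref{fin} is satisfied with $F_1 = \{0\}$ (or any finite-dimensional subspace, trivially transverse to the isomorphism $I$). Taking $F_1$ of positive dimension to avoid degenerate conventions, the oriented preimage $M = I^{-1}(F_1) = F_1 \cap E$ is $F_1$ itself with its chosen orientation, and since $I$ is naturally oriented, the restriction $\restr{I}{F_1}$ is orientation-preserving. Picking a neighborhood $V \subset M$ of $0$, Definition \ref{degfin} yields ${\rm deg}(I, U, 0) = {\rm deg}_B(\restr{I}{V}, V, 0) = 1$ by the normalization of Brouwer's degree on finite-dimensional manifolds.

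\textbf{Domain additivity.} Since $U_1 \cap U_2 = \emptyset$ and $C(g, U, \phi) \subset U_1 \cup U_2$, the two sets $C(g, U_i, \phi) = C(g, U, \phi) \cap U_i$ ($i=1,2$) are compact (as closed subsets of a compact set), so $(g, U_i, \phi)$ are admissible. To compute the three degrees simultaneously, pick one bounded neighborhood $\widetilde U \subset U$ of $C(g, U, \phi)$ on which $g$ is proper and $\phi$ is compact, then split it as $\widetilde U = \widetilde U_1 \cup \widetilde U_2$ with $\widetilde U_i \subset U_i$ containing $C(g, U_i, \phi)$ and $\widetilde U_1 \cap \widetilde U_2 = \emptyset$. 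A single approximation $\phi_1 = j_\delta \circ \phi$ (as in Definition \ref{deg}) with $\delta$ small enough works on $\widetilde U$, and a common finite-dimensional $F_1$, open set $V \subset M = g^{-1}(F_1)$ and approximation $f$ work for all three triples. Then the identity follows from the additivity property of Brouwer's degree applied to the disjoint decomposition $V = V_1 \sqcup V_2$ where $V_i$ is the portion of $V$ lying in $\widetilde U_i$.

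\textbf{Homotopy invariance.} This is the substantial part, and the strategy mirrors Lemma \ref{inv}: reduce the infinite-dimensional homotopy to a finite-rank homotopy to which Lemma \ref{homo} applies. Since $K_0 := C(h, U \times [0,1], \phi)$ is compact and $[0,1]$ is compact, standard tube-lemma arguments produce a bounded open set $U_1 \subset U$ with $\overline{U}_1 \subset U$ such that $K_0 \subset U_1 \times [0,1]$, $\restr{h}{\overline{U}_1 \times [0,1]}$ is proper, and $\restr{\phi}{\overline{U}_1 \times [0,1]}$ is compact. Since $0 \notin (h - \phi)(\partial U_1 \times [0,1])$ and the latter is closed, there is $\delta > 0$ with $B_\delta(0) \cap (h - \phi)(\partial U_1 \times [0,1]) = \emptyset$. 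The compact set $K := \overline{\phi(\overline{U}_1 \times [0,1])}$ admits a Schauder-type projection $j_\delta \in C(K, F_1)$ with $F_1 \subset F$ finite-dimensional and $\|j_\delta(y) - y\|_F < \delta/2$ on $K$. Setting $\phi_1 = j_\delta \circ \phi \in CJ(\overline{U}_1 \times [0,1], F_1)$, the same triangle-inequality argument used in Lemma \ref{inv} shows that $C(h, U_1 \times [0,1], \phi_1)$ is compact and stays in $U_1 \times [0,1]$. Now $(h, U_1 \times [0,1], \phi_1)$ satisfies both hypotheses of Lemma \ref{homo}, so $t \mapsto {\rm deg}(h(\cdot, t), U_1, \phi_1(\cdot, t))$ is constant on $[0,1]$. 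Finally, for each $t$ the pair $(U_1, j_\delta)$ is an admissible choice in Definition \ref{deg} for the triple $(h(\cdot, t), U, \phi(\cdot, t))$, so ${\rm deg}(h(\cdot, t), U, \phi(\cdot, t)) = {\rm deg}(h(\cdot, t), U_1, \phi_1(\cdot, t))$, and the conclusion follows.

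The main obstacle is the last step: one has to ensure that a \emph{single} finite-rank approximation $\phi_1 = j_\delta \circ \phi$ simultaneously serves as the reducing approximation of Definition \ref{deg} for every $t \in [0,1]$. This requires the uniform separation $B_\delta(0) \cap (h-\phi)(\partial U_1 \times [0,1]) = \emptyset$ on the whole cylinder and uniform compactness of $\phi$ on $\overline{U}_1 \times [0,1]$; both follow from the compactness of $[0,1]$ combined with the local compactness of $\phi$ and upper semi-continuity, but they are the delicate points that make the reduction to Lemma \ref{homo} legitimate.
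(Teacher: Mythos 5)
Your proofs of (i) and (ii) supply the details the paper merely calls ``straightforward'' and omits; both are fine. For (iii) you diverge from the paper's route. The paper argues \emph{locally in $t$}: it fixes $t_0$, picks a small interval $I_\sigma$ around $t_0$ and a bounded open $W$ containing the section $C_{t_0}$, on which $h$ is proper and $\phi$ compact, applies Lemma \ref{homo} on the small cylinder $W\times I_\sigma$ to get constancy on $I_\sigma$, and finally invokes connectedness of $[0,1]$ to pass from local to global constancy. You instead globalize immediately: you posit a single bounded open $U_1$ with $\overline{U}_1\subset U$ on which $h$ is proper and $\phi$ compact over all of $\overline{U}_1\times[0,1]$, and then apply Lemma \ref{homo} once. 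Both routes work, but the globalization step is more delicate than ``standard tube-lemma arguments'' suggests: $K_0$ contains no full slice $\{x\}\times[0,1]$, so the tube lemma as usually stated does not apply directly. What is needed is a double compactness argument: for each $x$ in the projection of $K_0$, cover $[0,1]$ by finitely many intervals $I_{x,t_i}$ over which local properness of $h$ and local compactness of $\phi$ hold on $x$-neighborhoods $V_{x,t_i}$, set $V_x=\bigcap_i V_{x,t_i}$, observe that properness and compactness pass to finite unions in $t$ and to closed subsets in $x$ (so both hold on $\overline{V}_x\times[0,1]$), and then take a finite subcover of the projection of $K_0$ by such $V_x$'s. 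You rightly flag the uniform separation and uniform compactness as the delicate points, but you should also make explicit that this construction yields properness of $h$ \emph{as a map on the product} $\overline{U}_1\times[0,1]$, not merely slice-wise, since that is what the compactness of $C(h,U_1\times[0,1],\phi_1)$ actually requires. The paper's local route sidesteps this globalization entirely, at the mild cost of an extra connectedness step at the end.
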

\begin{proof}
Properties \ref{degprop1}, \ref{degprop2} follow from Definition \ref{deg} and the corresponding properties of Brouwer's degree (the proof is straightforward, so we omit it).
\vskip2pt
\noindent
To prove \ref{degprop3}, we first fix $t\in[0,1]$. By compactness, we can find $\sigma>0$ and a bounded open neighborhood $W\subset U$ of the section
\[C_t=\big\{x\in U:\,(x,t)\in C(h,U\times[0,1],\phi)\big\}\]
s.t.\ $\restr{h}{\overline{W}\times I_\sigma}$ is proper and $\restr{\phi}{\overline{W}\times I_\sigma}$ is compact, where we have set $I_\sigma=[t-\sigma,t+\sigma]\cap [0,1]$. We also introduce a finite rank map $j\in C(K,F)$, close enough to the identity of $K=\overline{\phi(\overline{W}\times I_\sigma)}$. By the excision property of Browuer's degree and the construction above, for all $s\in I_\sigma$ we have
\[{\rm deg}(h(\cdot,s),U,\phi(\cdot,s))={\rm deg}(h(\cdot,s),W,j\circ\phi(\cdot,s)).\]
Besides, by Lemma \ref{homo} the function
\[s\mapsto\,{\rm deg}(h(\cdot,s),W,j\circ\phi(\cdot,s))\]
is constant in $I_\sigma$, hence ${\rm deg}(h(\cdot,s),U,\phi(\cdot,s))$ turns out to be locally constant in $[0,1]$. Since $[0,1]$ is connected, we get the conclusion.
\end{proof}

\begin{remark}\label{ghi}
In fact, Proposition \ref{degprop} \ref{degprop3} holds in a stronger form, i.e., for subsets of $E\times\R$ which are not necessarily products, as it can be seen from the proof. This is called {\em generalized homotopy invariance}.
\end{remark}

\section{Persistence results and bifurcation points}\label{sec6}

\noindent
We can now prove the main results of the present paper, as announced in the Introduction. Throughout this section, $E$ and $F$ are two real Banach spaces, $\Omega\subset E$ is an open (not necessarily bounded) set s.t.\ $0\in\Omega$, $L\in\Phi_0(E,F)$ satisfy ${\rm ker}\,L\neq 0$, $C\in\mathcal{L}(E,F)$ is another bounded linear operator, and $\phi\in CJ(\overline\Omega,F)$ is locally compact. The linear operators $L$, $C$ satisfy the transversality condition \eqref{trans}. For all $\eps,\lambda\in\R$ we consider the perturbed problem \eqref{pev}, whose solutions are meant in the following sense:

\begin{definition}\label{sol}
A \emph{solution} of \eqref{pev} is a triple $(x,\eps,\lambda)\in\partial\Omega\times\R\times\R$ s.t.\
\[Lx-\lambda Cx+\eps\phi(x)\ni 0.\]
The set of solutions is denoted by $\mathcal{S}$. Moreover a solution $(x,\eps,\lambda)\in\mathcal{S}$ is a \emph{trivial} solution, if $\eps=\lambda=0$. Finally, we say that $x_0\in\partial\Omega$ is a \emph{bifurcation point}, if $(x_0,0,0)\in\mathcal{S}$ and any neighborhood of $(x_0,0,0)$ in $E\times\R\times\R$ contains at least one non-trivial solution.
\end{definition}

\noindent
Clearly, any trivial solution $(x,0,0)$ of \eqref{pev} identifies with its first component $x$. The set of such vectors is
\[\mathcal{S}_0=\partial\Omega\cap{\rm ker}\,L.\]
Regarding our definition of a bifurcation point, we note that it is analogous to that of \cite{BCFP}, and fits in the very general definition given in \cite[p.\ 2]{CH}. Finally, we note that, whenever $(x,0,\lambda)\in\mathcal{S}$, $(x,\lambda)$ is an eigenpair of the eigenvalue problem \eqref{ev}: thus, we keep the names {\em eigenvector} for $x$ and {\em eigenvalue} for $\lambda$, respectively, for any triple $(x,\eps,\lambda)\in\mathcal{S}$.
\vskip2pt
\noindent
As observed in \cite[Remark 5.1]{BCFP}, transversality condition \eqref{trans} is in fact equivalent to
\[{\rm im}\,L\oplus C({\rm ker}\,L)=F.\]
Thus, we can find $b>0$ s.t.\ $L-\lambda C\in\Phi_0(E,F)$ is invertible for all $0<|\lambda|\le b$. Moreover, since $0\notin\partial\Omega$, for any bifurcation point $x_0\in\mathcal{S}_0$ we can find a neighborhood $W\subset E\times\R\times\R$ of $(x_0,0,0)$ s.t.\ any triple $(x,\eps,\lambda)\in\mathcal{S}\cap W$ actually must have $\eps\neq 0$.
\vskip2pt
\noindent
The map $\lambda\mapsto L-\lambda C$ (which is orientable according to Definition \ref{orientcont} since its domain is simply connected, see Proposition \ref{fredmaptop} \ref{fredmaptop3}) exhibits a sign jump property (a special case of \cite[Corollary 5.1]{BFPS}):

\begin{lemma}\label{signjump}
Let $b>0$ be defined as above, $h\in C([-b,b],\Phi_0(E,F))$ be defined by
\[h(\lambda)=L-\lambda C,\]
and oriented. Then:
\begin{enumroman}
\item\label{signjump1} the map $\lambda\mapsto\,{\rm sign}\,h(\lambda)$ is constant in both $[-b,0)$ and $(0,b]$;
\item\label{signjump2} ${\rm sign}\,h(b)\neq{\rm sign}\,h(-b)$ iff ${\rm dim}({\rm ker}\,L)$ is odd.
\end{enumroman}
\end{lemma}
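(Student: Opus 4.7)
The plan is to choose a single finite-rank corrector $A \in \mathcal{C}(L)$ dictated by the transversality condition, to observe that $A$ remains a corrector of $h(\lambda)$ for all $\lambda$ in a neighborhood of $0$, and then to read off ${\rm sign}\,h(\lambda)$ from the sign of an explicit determinant (Definitions \ref{L-equiv}--\ref{fredorient}). Part (i) will follow from a continuity argument, while (ii) reduces to a one-variable determinant computation.

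For (i), fix $\lambda_0 \in [-b,0) \cup (0,b]$, so that $h(\lambda_0)$ is an isomorphism and $0 \in \mathcal{C}(h(\lambda_0))$. By continuity of orientation (Definition \ref{orientcont}(ii)), pick a positive corrector $B$ of $h(\lambda_0)$ which remains positive on a neighborhood $V$ of $\lambda_0$. For $\lambda \in V$ with $h(\lambda)$ invertible, the operator $T(\lambda) := (h(\lambda)+B)^{-1}h(\lambda) = I - (h(\lambda)+B)^{-1}B$ is a finite-rank perturbation of the identity depending continuously on $\lambda$, so $\det T(\lambda)$ is continuous and nonzero. By Definition \ref{L-equiv}, its sign decides whether $0$ is positive or negative for $h(\lambda)$, and hence equals ${\rm sign}\,h(\lambda)$; since $h(\lambda)$ is invertible on each of the two connected components $[-b,0)$ and $(0,b]$, the function $\lambda \mapsto {\rm sign}\,h(\lambda)$ is locally constant, hence constant, on each.

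For (ii), use transversality in the equivalent form $F = {\rm im}\,L \oplus C({\rm ker}\,L)$ together with a topological direct sum $E = {\rm ker}\,L \oplus E_1$ (with associated projector $P : E \to {\rm ker}\,L$), chosen so that $L|_{E_1} : E_1 \to {\rm im}\,L$ is an isomorphism. Set $A := C \circ P$. Reading $L + A$ in the block decomposition $E = {\rm ker}\,L \oplus E_1$, $F = C({\rm ker}\,L) \oplus {\rm im}\,L$ exhibits it as the block-diagonal isomorphism $\mathrm{diag}(C|_{{\rm ker}\,L},\,L|_{E_1})$; hence $A \in \mathcal{C}(L)$, of rank $n := \dim({\rm ker}\,L)$. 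Replacing the orientation of $h$ by its opposite if necessary (this flips both ${\rm sign}\,h(\pm b)$ simultaneously and so does not affect either claim), we may assume $A$ is positive for $h(0) = L$, and hence, by continuity, for $h(\lambda)$ on a neighborhood of $0$. In the same block decomposition, for small $|\lambda|$, one has the upper block-triangular form
\[
h(\lambda) + A = \begin{pmatrix} (1-\lambda)\,C|_{{\rm ker}\,L} & -\lambda\,C_2 \\ 0 & L|_{E_1} - \lambda\,C_1 \end{pmatrix},
\]
where $C_1,\,C_2$ are the components of $C|_{E_1}$ in ${\rm im}\,L$ and $C({\rm ker}\,L)$ respectively. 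A direct inversion then gives $(h(\lambda)+A)^{-1}A = \tfrac{1}{1-\lambda}\,P$, so $T(\lambda) := (h(\lambda)+A)^{-1}h(\lambda) = I - \tfrac{1}{1-\lambda}\,P$ leaves ${\rm ker}\,L$ invariant and restricts there to $\tfrac{-\lambda}{1-\lambda}\,I_{{\rm ker}\,L}$. Choosing $E_0 = {\rm ker}\,L \supseteq {\rm im}(I - T(\lambda))$ as the subspace on which to compute the determinant yields
\[
\det T(\lambda) = \left(\frac{-\lambda}{1-\lambda}\right)^{n},
\]
which is positive for small $\lambda < 0$ and has sign $(-1)^n$ for small $\lambda > 0$. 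By Definition \ref{L-equiv}, ${\rm sign}\,h(\lambda) = +1$ for $\lambda$ slightly negative and ${\rm sign}\,h(\lambda) = (-1)^n$ for $\lambda$ slightly positive; combined with part (i), this extends to $\lambda = \pm b$ and proves (ii). The only real obstacle is the determinant bookkeeping---identifying a correct $E_0$ and checking that the block-triangular inversion is valid uniformly in a neighborhood of $0$---but the explicit formula $(h(\lambda)+A)^{-1}A = \tfrac{1}{1-\lambda}\,P$ makes this transparent, and shows that the transversality condition \eqref{trans} is doing all the essential work.
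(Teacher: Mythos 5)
The paper does not prove Lemma \ref{signjump}; it delegates the statement entirely to the citation \cite[Corollary 5.1]{BFPS}. Your argument is therefore not a variant of the paper's proof but a self-contained replacement for the cited result, and as such it is genuinely different in spirit: you pick the canonical corrector $A=C\circ P$ suggested by the transversality splitting $F={\rm im}\,L\oplus C({\rm ker}\,L)$, compute $(h(\lambda)+A)^{-1}A=\tfrac{1}{1-\lambda}P$ explicitly, and read off the Kato determinant $\det T(\lambda)=\bigl(\tfrac{-\lambda}{1-\lambda}\bigr)^{n}$ on $E_0={\rm ker}\,L$. I checked the block computation, the invertibility of $h(\lambda)+A$ for small $\lambda$, and the sign bookkeeping (including the observation that flipping the global orientation of $h$ flips both ${\rm sign}\,h(\pm b)$ at once): all correct, and the localization to small $|\lambda|$ is legitimately extended to $\pm b$ via part (i). What your direct route buys is transparency --- one sees immediately that the parity of $\dim{\rm ker}\,L$ enters through the $n$-th power of a scalar that changes sign across $\lambda=0$ --- at the cost of specializing to the affine family $\lambda\mapsto L-\lambda C$; the reference proves a more general sign-jump theorem.

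The one point you lean on without justification, in both (i) and in the ``by continuity, $A$ stays positive near $0$'' step of (ii), is the continuity in $\lambda$ of the Kato-style determinant $\lambda\mapsto\det\bigl((h(\lambda)+B)^{-1}(h(\lambda)+B')\bigr)$ when the finite-dimensional carrier $E_0$ cannot be chosen independently of $\lambda$. This is true and standard (one can take $E_0(\lambda)=(h(\lambda)+B)^{-1}({\rm im}(B-B'))$, push the fixed basis of ${\rm im}(B-B')$ through $(h(\lambda)+B)^{-1}$, and observe that the resulting matrix entries vary continuously), but since it is the sole analytic ingredient that makes your local-constancy and local-positivity claims work, it deserves at least a sentence; as written, ``$\det T(\lambda)$ is continuous and nonzero'' is asserted rather than argued. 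Everything else is sound.
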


\noindent
Lemma \ref{signjump} above is the reason why the assumption that ${\rm dim}({\rm ker}\,L)$ is odd is so important in our theory. Now we prove an existence result on {\em bounded} subdomains, which is the core of our argument:

\begin{proposition}\label{exi}
Let ${\rm dim}({\rm ker}\,L)$ be odd, \eqref{trans} hold, and $U\subseteq\Omega$ be an open, bounded set s.t.\ $0\in U$ and $\restr{\phi}{\overline{U}}\in CJ(\overline{U},F)$ is compact. Then, there exist $a,b>0$ s.t.\ for all $\eps\in[-a,a]$ there exist $\lambda\in[-b,b]$, $x\in\partial U$ s.t.\
\[Lx-\lambda Cx+\eps\phi(x)\ni 0.\]
\end{proposition}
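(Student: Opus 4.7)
The plan is to argue by contradiction via the homotopy invariance of the set-valued degree (Proposition~\ref{degprop}~\ref{degprop3} and Remark~\ref{ghi}) combined with the sign jump of Lemma~\ref{signjump}. By \eqref{trans} and the openness of isomorphisms in $\mathcal{L}(E,F)$, first fix $b>0$ so small that $L-\lambda C$ is an isomorphism for every $\lambda\in[-b,0)\cup(0,b]$. View $h(x,\lambda):=Lx-\lambda Cx$ as an oriented $\Phi_0$-homotopy on $\overline U\times[-b,b]$ (orientable since the domain is simply connected, by Proposition~\ref{fredmaptop}~\ref{fredmaptop3}). Because ${\rm dim}({\rm ker}\,L)$ is odd, Lemma~\ref{signjump}~\ref{signjump2} gives ${\rm sign}(L-bC)\ne{\rm sign}(L+bC)$.

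Next, I would pick $a>0$ so small that for every $\eps\in[-a,a]$, $s\in[0,1]$, and $\lambda\in\{-b,b\}$, every $x\in\overline U$ satisfying $Lx-\lambda Cx+s\eps\phi(x)\ni 0$ lies in a ball around $0$ contained in $U$. This is possible because $\phi(\overline U)$ is bounded (by compactness of $\restr{\phi}{\overline U}$) and $L\mp bC$ are isomorphisms, so any such $x$ obeys
\[\|x\|_E\le|\eps|\,\|(L\mp bC)^{-1}\|\,\sup\{\|y\|_F:y\in\phi(\overline U)\},\]
which is strictly smaller than ${\rm dist}(0,\partial U)>0$ for $a$ small (using $0\in U$ open together with a standard star-like argument that places $B_r(0)$ inside $U$ for $r<{\rm dist}(0,\partial U)$).

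Now fix $\eps\in[-a,a]$ and assume, for contradiction, that no $(x,\lambda)\in\partial U\times[-b,b]$ satisfies $Lx-\lambda Cx+\eps\phi(x)\ni 0$. Then the coincidence set of $h$ with the $CJ$-map $(x,\lambda)\mapsto-\eps\phi(x)$ on $\overline U\times[-b,b]$ is compact and contained in $U\times[-b,b]$: it avoids $\partial U\times[-b,b]$ by the contradiction hypothesis, and at the two endpoints $\lambda=\pm b$ it is pushed away from $\partial U$ by the choice of $a$ (taking $s=1$). Generalized homotopy invariance therefore yields
\[{\rm deg}(L+bC,U,-\eps\phi)={\rm deg}(L-bC,U,-\eps\phi).\]
For each fixed $\lambda\in\{-b,b\}$, the $CJ$-homotopy $(x,s)\mapsto-s\eps\phi(x)$ (cast as $\tilde k\circ\tilde\psi$ in the spirit of Definition~\ref{cjmap}) has no coincidence with $L-\lambda C$ on $\partial U$, again by the choice of $a$; homotopy invariance together with normalization (Proposition~\ref{degprop}~\ref{degprop1}) gives ${\rm deg}(L\mp bC,U,-\eps\phi)={\rm deg}(L\mp bC,U,0)={\rm sign}(L\mp bC)$. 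Combining, ${\rm sign}(L+bC)={\rm sign}(L-bC)$, contradicting Lemma~\ref{signjump}~\ref{signjump2}.

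The main technical obstacle is keeping the coincidence sets compact (and bounded away from $\partial U$ where required) throughout both homotopies: the $\lambda$-homotopy on $\overline U\times[-b,b]$ relies simultaneously on the contradiction hypothesis to exclude $\partial U\times[-b,b]$ and on the endpoint a priori bound at $\lambda=\pm b$, while each of the two $s$-homotopies relies on the same a priori bound uniformly for $s\in[0,1]$. A subordinate point is identifying ${\rm deg}(T,U,0)$ with ${\rm sign}(T,\alpha)$ for an oriented $\Phi_0$-isomorphism $T$ with $0\in U$, which reduces via a straight-line homotopy through isomorphisms (preserving the sign) to the naturally oriented $\pm I$ handled by the normalization axiom.
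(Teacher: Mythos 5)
Your argument is essentially a reorganization of the paper's proof: both rest on the sign jump of Lemma~\ref{signjump}, the homotopy invariance of the degree (Proposition~\ref{degprop}~\ref{degprop3}), and the computation $\deg(L\pm bC,U,0)=\mathrm{sign}(L\pm bC)=\pm 1$. Where the paper introduces the compact set $\Gamma\subset\mathcal R$ and argues that $(\eps,\pm b)$ lie in different components of $\mathcal R\setminus\Gamma$ (so every vertical segment $\{\eps\}\times[-b,b]$ must meet $\Gamma$), you replace this connectedness argument with a direct contradiction built from one $\lambda$-homotopy (from $-b$ to $b$, using the contradiction hypothesis to keep the coincidence set off $\partial U$) and two $s$-homotopies (from $-\eps\phi$ to $0$ at $\lambda=\pm b$, using the a~priori bound from the choice of $a$). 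The two formulations are logically equivalent; yours is perhaps slightly cleaner because it makes the two homotopies explicit rather than hiding them inside the component structure of $\mathcal R\setminus\Gamma$.

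The one step that would not survive scrutiny is the ``subordinate point.'' You propose to verify $\deg(T,U,0)=\mathrm{sign}(T,\alpha)$ for an oriented $\Phi_0$-isomorphism $T$ by a straight-line homotopy ``through isomorphisms'' to a naturally oriented $\pm I$, and then invoke normalization. That reduction fails on several counts: the set of isomorphisms in $\mathcal L(E,F)$ is not convex (for example $(1-s)I+s(-I)=(1-2s)I$ degenerates at $s=\tfrac12$), when $E\neq F$ there is no identity $I\colon E\to F$ to homotope to, and the normalization axiom as stated covers only $\deg(I,U,0)=1$, not $-I$. The identity $\deg(T,U,0)=\mathrm{sign}(T,\alpha)$ is true, but must be obtained as in the paper: unwind Definitions~\ref{degfin} and~\ref{deg} to reduce to the Brouwer degree of $\restr{T}{E_1}\in\mathcal L(E_1,F_1)$ on $U\cap E_1$, where $E_1$ is the oriented $(\Phi_0,T)$-preimage of a finite-dimensional $F_1$, and then use the classical fact that a linear isomorphism between oriented finite-dimensional spaces has Brouwer degree $+1$ exactly when it is orientation-preserving (cf.\ \cite[p.~121]{H}). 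With that replacement your proof is sound.
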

\begin{figure}
\centering
\begin{tikzpicture}[scale=2]
\draw (0,0) node[below]{$(-a, -b)$}; 
\draw (3,0) node[below]{$(a, -b)$};
\draw (0,2) node[above]{$(-a, b)$};
\draw (3,2) node[above]{$(a, b)$};
\draw (1.5, 0) node[below]{$(0, -b)$};
\draw (1.5, 2) node[above]{$(0, b)$};
\draw (2.7, 1.7) node{$\mathcal{R}$};
\draw[clip] (0,0) rectangle (3, 2); 
\draw[dashed] (1.5, 0) -- (1.5, 2);
\draw[line width=2pt] (0, 1) .. controls (2,2) and ( 1, 0.5 ) .. (3, 0.8); 
\draw (2.4,1) node{$\Gamma$};
\end{tikzpicture}
\caption{The set $\Gamma$ cutting the rectangle $\mathcal{R}$.}
\label{fig1}
\end{figure}
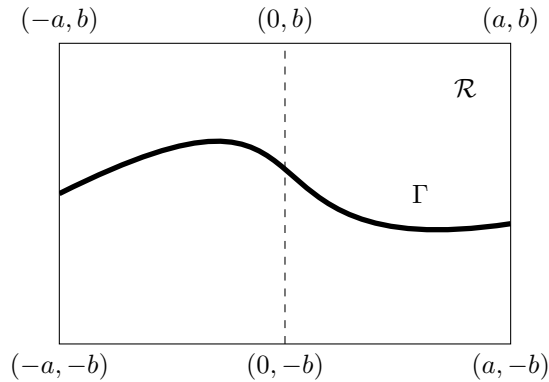
\begin{proof}
Let $b>0$ be as in Lemma \ref{signjump}, and fix $a>0$ (to be better determined later). Set
\[\mathcal{R}=[-a,a]\times[-b,b],\]
and define the set
\beq\label{cappa}
\mathcal{K}=\big\{(x,\eps,\lambda)\in\partial U\times\mathcal{R}:\,Lx-\lambda Cx+\eps\phi(x)\ni 0\big\}.
\eeq
The set $\mathcal{K}\subset E\times\R\times\R$ is compact. Indeed, let $(x_n,\eps_n,\lambda_n)$ be a sequence in $\mathcal{K}$. Then, $(\eps_n,\lambda_n)$ is a bounded sequence in $\mathcal{R}$, hence passing to a subsequence we have $(\eps_n,\lambda_n)\to(\eps,\lambda)$ for some $(\eps,\lambda)\in\mathcal{R}$. As seen above, we have eventually $\eps_{n}\neq 0$. Now set for all $n\in\N$
\[y_n=-\frac{1}{\eps_n}(Lx_n-\lambda_n Cx_n)\in\phi(x_n).\]
Since $\overline{\phi(\overline{U})}$ is compact, passing if necessary to a further subsequence, we have $y_n\to y$ for some $y\in F$, which implies
\[\lim_n(Lx_n-\lambda Cx_n)=\lim_n(Lx_n-\lambda_n Cx_n)+\lim_n(\lambda_n-\lambda)Cx_n=-\eps y\]
(recall that $(x_n)$ is a bounded sequence and $C$ is a bounded operator). The operator $L-\lambda C\in\Phi_0(E,F)$ is proper on closed and bounded subsets of $E$, hence passing again to a subsequence if necessary we have $x_n\to x$ for some $x\in\partial U$. Thus, $(x_n,\eps_n,\lambda_n)\to(x,\eps,\lambda)$ for some $(x,\eps,\lambda)\in\mathcal{K}$.
\vskip2pt
\noindent
Clearly, the projection of $\mathcal{K}$ onto $\mathcal{R}$, namely the set
\[\Gamma=\big\{(\eps,\lambda)\in\mathcal{R}:\,(x,\eps,\lambda)\in\mathcal{K} \ \text{for some $x\in\partial U$}\big\},\]
is compact as well. Now we choose an orientation of $L\in\Phi_0(E,F)$ (Definition \ref{fredorient}), and fix $(\eps,\lambda)\in\mathcal{R}\setminus\Gamma$. Then $(L-\lambda C,U,-\eps\phi)$ is an admissible triple (Definition \ref{triple}), since the coincidence set
\[C(L-\lambda C,U,-\eps\phi)=\big\{x\in U:\,Lx-\lambda Cx+\eps\phi(x)\ni 0\big\}\]
is compact. Indeed, arguing as above, for any sequence $(x_n)$ in $C(L-\lambda C,U,-\eps\phi)$ we can find a relabeled subsequence s.t.\ $x_n\to x$ for some $x\in\overline{U}$. It remains to prove that $x\in U$. Otherwise, we would have $x\in\partial U$, hence $(\eps,\lambda)\in\Gamma$, a contradiction.
\vskip2pt
\noindent
So, the integer-valued map
\[(\eps,\lambda)\mapsto{\rm deg}(L-\lambda C,U,-\eps\phi)\]
is well defined in the relatively open set $\mathcal{R}\setminus\Gamma$ (Definition \ref{deg}), and constant on any connected component of $\mathcal{R}\setminus\Gamma$ by homotopy invariance (Proposition \ref{degprop} \ref{degprop3}).
\vskip2pt
\noindent
By the choice of $b>0$, both operators $L\pm bC$ are invertible. Hence, $(0,\pm b)\in\mathcal{R}\setminus\Gamma$ (recall that $0\notin\partial U$). We claim that
\beq\label{degjump}
{\rm deg}(L+bC,U,0)\neq{\rm deg}(L-bC,U,0).
\eeq
Indeed, let $L+bC\in\Phi_0(E,F)$ be naturally oriented, then
\[{\rm sign}(L+bC)=1\]
(Definition \ref{fredorient} \ref{fredorient2}, \ref{fredorient3}). We fix a non-trivial, finite-dimensional subspace $F_1\subset F$ and set $E_1=(L+bC)^{-1}(F_1)$, then we orient $F_1$ and $E_1$ so that $E_1$ is the oriented $(L+bC)$-preimage of $F_1$. With such an orientation of the involved spaces and maps, recalling Definitions \ref{deg}, \ref{degfin}, and \cite[p.\ 121]{H}, we have
\[{\rm deg}(L+bC,U,0)={\rm deg}_B(\restr{(L+bC)}{E_1},U\cap E_1,0)=1.\]
Since ${\rm dim}({\rm ker}\,L)$ is odd, by Lemma \ref{signjump} \ref{signjump2} we have
\[{\rm sign}(L-bC)=-1,\]
which, repeating the construction above with the same orientations, leads to
\[{\rm deg}(L-bC,U,0)=-1.\]
Similar arguments can be developed if different orientations are chosen, so \eqref{degjump} holds in any case.
\vskip2pt
\noindent
By \eqref{degjump}, we deduce that $(0,\pm b)$ lie in different connected components of $\mathcal{R}\setminus\Gamma$. By reducing further $a>0$ if necessary, we may assume that $(\eps,\pm b)$ lie in different connected components of $\mathcal{R}\setminus\Gamma$, for all $\eps\in[-a,a]$ (the situation is depicted in figure \ref{fig1}). So, for all $\eps\in[-a,a]$ we can find $\lambda\in[-b,b]$ s.t.\ $(\eps,\lambda)\in\Gamma$, which concludes the proof.
\end{proof}

\noindent
Proposition \ref{exi} is the main tool for proving persistence of the eigenpairs under a set-valued perturbation, with the additional assumption that the set $\Omega_0:=\overline\Omega\cap {\rm ker} L$ is {\em compact} (note that $\Omega_0\neq\emptyset$ as $0\in\Omega$). We begin with eigenvalues:

\begin{theorem}\label{perval}
Let ${\rm dim}({\rm ker}\,L)$ be odd, \eqref{trans} hold, and $\Omega_0:=\overline\Omega\cap {\rm ker} L$ be non-empty and compact. Then, for all $c>0$ small enough there exist $a,b>0$ s.t.\ the set-valued map $\Gamma:[-a,a]\to 2^{[-b,b]}$ defined by
\[\Gamma(\eps)=\big\{\lambda\in[-b,b]:\,(x,\eps,\lambda)\in\mathcal{S} \ \text{for some $x\in\partial\Omega\cap B_c(\mathcal{S}_0)$}\big\}\]
has the following properties:
\begin{enumroman}
\item\label{perval1} $\Gamma(\eps)\neq\emptyset$ for all $\eps\in[-a,a]$;
\item\label{perval2} $\Gamma$ is u.s.c.
\end{enumroman}
\end{theorem}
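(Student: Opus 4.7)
The plan is to reduce to Proposition \ref{exi} by building a bounded auxiliary open subset $U$ of $\Omega$ around the compact set $\Omega_0$, then to confine the solutions to $\partial\Omega\cap B_c(\mathcal{S}_0)$ via a compactness-based localization, and finally to obtain upper semi-continuity from closedness of the graph of $\Gamma$ combined with Lemma \ref{compgr}.

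Since $\Omega_0$ is compact and $\phi$ is locally compact, I would first fix an open bounded neighborhood $V\subset E$ of $\Omega_0$ such that $\restr{\phi}{\overline V\cap\overline\Omega}$ is compact, and set $U=\Omega\cap V$. Then $U$ is open, bounded, contains $0\in\Omega_0\subset V$, and $\restr{\phi}{\overline U}$ is compact; Proposition \ref{exi} applies and yields $a_0,b_0>0$ such that for every $\eps\in[-a_0,a_0]$ some $\lambda\in[-b_0,b_0]$ admits $x\in\partial U$ solving the inclusion. The key geometric feature is the decomposition
\[\partial U\subseteq\bigl(\partial\Omega\cap\overline V\bigr)\cup\bigl(\overline\Omega\cap\partial V\bigr),\]
together with $\Omega_0\subset V$ (open), which gives $\partial V\cap\Omega_0=\emptyset$ and thus severs the auxiliary boundary $\partial V$ from $\ker L$.

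The heart of the proof, and what I expect to be the main technical obstacle, is the following localization claim: for $a\in(0,a_0]$ and $b\in(0,b_0]$ small enough, any solution $(x,\eps,\lambda)\in\mathcal{S}$ with $(\eps,\lambda)\in[-a,a]\times[-b,b]$ and $x\in\partial U\cup\bigl(\partial\Omega\cap B_c(\mathcal{S}_0)\bigr)$ in fact satisfies $x\in\partial\Omega\cap B_{c/2}(\mathcal{S}_0)$. I would argue by contradiction, mirroring the Fredholm compactness scheme of Proposition \ref{exi}: a counterexample sequence $(x_n,\eps_n,\lambda_n)$ with $\eps_n,\lambda_n\to 0$ produces, through local compactness of $\phi$, a limit $y\in\phi(x)$ of $y_n=-\eps_n^{-1}(Lx_n-\lambda_n Cx_n)\in\phi(x_n)$ (the case $\eps_n=0$ being trivial); boundedness of $C$ and convergence of $\lambda_n$ make $(Lx_n)$ convergent, and properness of $L\in\Phi_0(E,F)$ on closed bounded sets extracts a subsequence $x_n\to x$ with $Lx=0$. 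Hence $x\in\ker L\cap\overline\Omega=\Omega_0\subset V$, so $x_n\in V$ eventually; in the $\partial U$ alternative, combined with $x_n\in\partial U$, this forces $x_n\in\partial\Omega$, while in the $\partial\Omega\cap B_c(\mathcal{S}_0)$ alternative $x_n\in\partial\Omega$ is already given. In both cases $x\in\partial\Omega\cap\ker L=\mathcal{S}_0$, contradicting ${\rm dist}(x_n,\mathcal{S}_0)\ge c/2$. This localization immediately delivers \ref{perval1}, since the witnesses on $\partial U$ produced by Proposition \ref{exi} then lie in $\partial\Omega\cap B_{c/2}(\mathcal{S}_0)\subseteq\partial\Omega\cap B_c(\mathcal{S}_0)$.

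For \ref{perval2} I would show the graph of $\Gamma$ is closed in the compact rectangle $[-a,a]\times[-b,b]$ and invoke Lemma \ref{compgr}. If $(\eps_n,\lambda_n)\to(\eps,\lambda)$ with $\lambda_n\in\Gamma(\eps_n)$, the localization allows me to pick witnesses $x_n\in\partial\Omega\cap B_{c/2}(\mathcal{S}_0)$; a further application of the Fredholm compactness argument yields a subsequence $x_n\to x\in\overline{B_{c/2}(\mathcal{S}_0)}\cap\partial\Omega\subset B_c(\mathcal{S}_0)\cap\partial\Omega$, and the closed-graph property of the u.s.c.\ compact-valued map $\phi$ gives $(x,\eps,\lambda)\in\mathcal{S}$, hence $\lambda\in\Gamma(\eps)$, completing the argument.
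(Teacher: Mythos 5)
Your proposal is correct and follows essentially the same route as the paper's proof: build a bounded auxiliary domain $U=\Omega\cap V$ around the compact set $\Omega_0$, apply Proposition~\ref{exi} to it, localize the resulting boundary solutions into $\partial\Omega\cap B_c(\mathcal{S}_0)$ by a Fredholm compactness argument (properness of $L-\lambda C$ plus compactness of $\restr{\phi}{\overline U}$), and obtain upper semi-continuity of $\Gamma$ from compactness of its graph via Lemma~\ref{compgr}. The only organizational difference is cosmetic: the paper packages your localization step by introducing the auxiliary u.s.c.\ map $\psi(\eps,\lambda)=\{x\in\partial U:(x,\eps,\lambda)\in\mathcal{K}\}$, showing $\psi(0,0)=\mathcal{S}_0$, and shrinking $a,b$ using u.s.c.\ at $(0,0)$, while you unpack exactly this into a direct sequential contradiction --- both are driven by compactness of $\mathcal{K}$ and by the inclusion $\partial U\cap V\subset\partial\Omega$ (equivalently your decomposition of $\partial U$); you should only make explicit the harmless requirement that $c$ is small enough that $B_c(\mathcal{S}_0)\subset V$, which guarantees $\partial\Omega\cap B_c(\mathcal{S}_0)\subset\partial U$ and hence that $\phi$ is compact along your counterexample sequences.
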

\begin{proof}
Since the set $ \Omega_0$ is compact, we can find a bounded open neighborhood $W\subset E$ of $\Omega_0$ s.t.\ $\restr{\phi}{\overline{U}}$ is compact, where we have set $U=W\cap\Omega$. Clearly, $U$ is a bounded open set s.t.\ $0\in U$. Then we can apply Proposition \ref{exi} and thus find a rectangle
\[\mathcal{R}=[-a,a]\times[-b,b] \ (a,b>0),\]
s.t.\ for all $\eps\in[-a,a]$ there exist $\lambda\in[-b,b]$, $x\in\partial U$ s.t.\
\[Lx-\lambda Cx+\eps\phi(x)\ni 0.\]
Besides, let $c>0$ be small enough that $B_c(\mathcal{S}_0)\subset W$ (recall that $\mathcal{S}_0=\partial\Omega\cap{\rm ker}\,L$). We define $\mathcal{K}\subset E\times\mathcal{R}$ as in \eqref{cappa}. As in the proof of Proposition \ref{exi}, we see that $\mathcal{K}$ is compact. We define a set-valued map $\psi:\mathcal{R}\to 2^F$ by setting
\[\psi(\eps,\lambda)=\big\{x\in\partial U:\,(x,\eps,\lambda)\in\mathcal{K}\big\}.\]
We claim that 
\[\psi(0,0)=\mathcal{S}_0\subset B_c(\mathcal{S}_0).\]
Indeed, for all $x\in\mathcal{S}_0$ we have $x\in\Omega_0\subset W$, which along with $x\in\partial\Omega$ implies $x\in\partial U$, while $(x,0,0)\in\mathcal{S}$, so $(x,0,0)\in\mathcal{K}$. Conversely, if $x\in\psi(0,0)$, then $x\in\partial U\subseteq\partial\Omega\cup\partial W$, while $x\in\Omega_0\subset W$, so we deduce $x\in\partial\Omega$ and since $(x,0,0)\in\mathcal{K}$ we have $x\in\mathcal{S}_0$.
\vskip2pt
\noindent
Moreover, the set ${\rm graph}\,\psi\subset\mathcal{R}\times E$ is obtained as a continuous image of $\mathcal{K}$ (by a swap of coordinates) and hence is compact. So, by Lemma \ref{compgr}, $\psi$ is u.s.c.
\vskip2pt
\noindent
Therefore by reducing $a,b>0$ if necessary we have for all $(\eps,\lambda)\in\mathcal{R}$
\beq\label{loc}
\psi(\eps,\lambda)\subset B_c(\mathcal{S}_0).
\eeq
Now we can prove both assertions. Fix $\eps\in[-a,a]$. By Proposition \ref{exi} there exist $\lambda\in[-b,b]$, $x\in\partial U$ s.t.\ $x\in\psi(\eps,\lambda)$, so by \eqref{loc} we have $x\in B_c(\mathcal{S}_0)$. Then, $x\in\partial U\cap W\subset\partial\Omega$, so $(x,\eps,\lambda)\in\mathcal{S}$. Thus $\lambda\in\Gamma(\eps)$, which proves \ref{perval1}.
\vskip2pt
\noindent
To prove \ref{perval2}, we just need to note that
\[{\rm graph}\,\Gamma=\big\{(\eps,\lambda)\in\mathcal{R}:\,(x,\eps,\lambda)\in\mathcal{K} \ \text{for some $x\in\partial\Omega\cap B_c(\mathcal{S}_0)$}\big\}\]
is but the projection of $\mathcal{K}$ onto $\mathcal{R}$, hence compact. Then Lemma \ref{compgr} ensures that $\Gamma:[-a,a]\to 2^{[-b,b]}$ is u.s.c.
\end{proof}

\noindent
A similar persistence result holds for the eigenvectors:

\begin{theorem}\label{pervec}
Let ${\rm dim}({\rm ker}\,L)$ be odd, \eqref{trans} hold, and $\Omega_0$ be compact. Then, for all $c>0$ small enough there exist $a,b>0$ s.t.\ the set-valued map $\Sigma:[-a,a]\to 2^E$ defined by
\[\Sigma(\eps)=\big\{x\in\partial\Omega\cap B_c(\mathcal{S}_0):\,(x,\eps,\lambda)\in\mathcal{S} \ \text{for some $\lambda\in[-b,b]$}\big\}\]
has the following properties:
\begin{enumroman}
\item\label{pervec1} $\Sigma(\eps)\neq\emptyset$ for all $\eps\in[-a,a]$;
\item\label{pervec2} $\Sigma$ is u.s.c.
\end{enumroman}
\end{theorem}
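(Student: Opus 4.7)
The plan is to follow closely the argument used for Theorem \ref{perval}, simply projecting the same compact solution set onto a different pair of coordinates. Since the rectangle $\mathcal{R}=[-a,a]\times[-b,b]$ and the compact set $\mathcal{K}$ produced there depend only on the localization around $\mathcal{S}_0$ and on Proposition \ref{exi}, I will reuse them without change.

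First I would pick a bounded open neighborhood $W\subset E$ of the compact set $\Omega_0$ with $\restr{\phi}{\overline{U}}$ compact, where $U=W\cap\Omega$, apply Proposition \ref{exi} to produce $a,b>0$ and take $c>0$ small enough that $B_c(\mathcal{S}_0)\subset W$. Then define
\[
\mathcal{K}=\big\{(x,\eps,\lambda)\in\partial U\times\mathcal{R}:\,Lx-\lambda Cx+\eps\phi(x)\ni 0\big\},
\]
exactly as in \eqref{cappa}. The argument from the proof of Theorem \ref{perval} shows $\mathcal{K}$ is compact. Next, introduce the auxiliary u.s.c.\ map $\psi:\mathcal{R}\to 2^E$, $\psi(\eps,\lambda)=\{x\in\partial U:(x,\eps,\lambda)\in\mathcal{K}\}$, which has compact graph by Lemma \ref{compgr} and satisfies $\psi(0,0)=\mathcal{S}_0\subset B_c(\mathcal{S}_0)$; shrinking $a,b$ if necessary, I can ensure $\psi(\eps,\lambda)\subset B_c(\mathcal{S}_0)$ for every $(\eps,\lambda)\in\mathcal{R}$, because the graph of $\psi$ is compact and $B_c(\mathcal{S}_0)$ is an open neighborhood of $\psi(0,0)$.

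For assertion \ref{pervec1}, fix $\eps\in[-a,a]$; Proposition \ref{exi} yields $\lambda\in[-b,b]$ and $x\in\partial U$ with the inclusion, so that $x\in\psi(\eps,\lambda)\subset B_c(\mathcal{S}_0)\subset W$. Combined with $x\in\partial U$, this gives $x\in\partial\Omega$, hence $x\in\Sigma(\eps)$. For assertion \ref{pervec2}, I would observe that
\[
{\rm graph}\,\Sigma=\big\{(\eps,x)\in[-a,a]\times E:\,(x,\eps,\lambda)\in\mathcal{K} \ \text{for some $\lambda\in[-b,b]$, with $x\in B_c(\mathcal{S}_0)$}\big\}
\]
is the image of $\mathcal{K}$ under the continuous coordinate swap $(x,\eps,\lambda)\mapsto(\eps,x)$, hence compact, and apply Lemma \ref{compgr} to conclude that $\Sigma:[-a,a]\to 2^E$ is u.s.c.

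The only real point requiring care is the localization step that confines $\psi(\eps,\lambda)$ to $B_c(\mathcal{S}_0)$ uniformly on the rectangle; the rest is a direct transcription of the eigenvalue case. This localization is not conceptually harder than in Theorem \ref{perval}, but it is crucial since without it one could not pass from solutions on $\partial U$ to genuine solutions on $\partial\Omega$.
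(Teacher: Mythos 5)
Your proof is correct and follows essentially the same route as the paper's: both proofs reuse the construction of $W$, $U$, $\mathcal{R}$, the compact set $\mathcal{K}$, and the uniform localization $\psi(\eps,\lambda)\subset B_c(\mathcal{S}_0)$ from the proof of Theorem \ref{perval}, then obtain nonemptiness from Proposition \ref{exi} and upper semicontinuity from the compactness of the projection of $\mathcal{K}$ onto $[-a,a]\times E$ together with Lemma \ref{compgr}. The only cosmetic difference is that you phrase the final step as a ``coordinate swap'' rather than a projection; it is of course a projection (non-injective), but that does not affect the argument since all you need is that it is a continuous image of a compact set.
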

\begin{proof}
As in the proof of Theorem \ref{perval}, for all $c>0$ small enough we find a rectangle $\mathcal{R}=[-a,a]\times[-b,b]$ ($a,b>0$) and an open neighborhood $W\subset E$ of $\Omega_0$ s.t., setting $U=W\cap\Omega$, the set $\mathcal{K}$ defined by \eqref{cappa} is compact, and moreover $x\in B_c(\mathcal{S}_0)$ whenever $(x,\eps,\lambda)\in\mathcal{K}$ (see \eqref{loc}).
\vskip2pt
\noindent
In particular, for all $(x,\eps,\lambda)\in\mathcal{K}$ we have $x\in\Sigma(\eps)$. Then, Proposition \ref{exi} implies \ref{pervec1}. Moreover, since
\[{\rm graph}\,\Sigma=\big\{(\eps,x)\in[-a,a]\times(\partial\Omega\cap B_c(\mathcal{S}_0)):\,(x,\eps,\lambda)\in\mathcal{K} \ \text{for some $\lambda\in[-b,b]$}\big\}\]
is the projection of $\mathcal{K}$ onto $[-a,a]\times E$, hence compact, by Lemma \ref{compgr} we also deduce \ref{pervec2}.
\end{proof}

\noindent
As a consequence, we prove that the set $\mathcal{S}_0$ contains at least one bifurcation point (Definition \ref{sol}):

\begin{theorem}\label{bif}
Let ${\rm dim}({\rm ker}\,L)$ be odd, \eqref{trans} hold, and $\Omega_0$ be compact. Then, problem \eqref{pev} has at least one bifurcation point.
\end{theorem}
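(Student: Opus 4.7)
\textbf{Proof proposal for Theorem \ref{bif}.} The plan is to harvest a sequence of non-trivial solutions produced by the persistence result (Theorem \ref{pervec}) along a null sequence of $\eps$-values, and then use compactness to extract a limit point in $\mathcal{S}_0$ whose every neighborhood is forced to meet non-trivial solutions.

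More precisely, I would first fix $c>0$ small enough and $a,b>0$ as provided jointly by Theorem \ref{pervec} and the construction in the proof of Theorem \ref{perval}, so that the rectangle $\mathcal{R}=[-a,a]\times[-b,b]$, the bounded subdomain $U=W\cap\Omega$, and the set $\mathcal{K}$ defined in \eqref{cappa} are all in place, with $\mathcal{K}$ compact and $x\in B_c(\mathcal{S}_0)$ whenever $(x,\eps,\lambda)\in\mathcal{K}$. Pick a sequence $\eps_n\in[-a,a]\setminus\{0\}$ with $\eps_n\to 0$; by Theorem \ref{pervec}\ref{pervec1}, for each $n$ there are $x_n\in\Sigma(\eps_n)\subseteq\partial\Omega\cap B_c(\mathcal{S}_0)$ and $\lambda_n\in[-b,b]$ with $(x_n,\eps_n,\lambda_n)\in\mathcal{S}\cap\mathcal{K}$. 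Because $\eps_n\neq 0$, each such triple is a \emph{non-trivial} solution in the sense of Definition \ref{sol}.

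Next I would pass to the limit. By compactness of $\mathcal{K}$, a subsequence (not relabeled) satisfies $(x_n,\eps_n,\lambda_n)\to(x_0,0,\lambda_0)$ with $x_0\in\partial\Omega$ and $\lambda_0\in[-b,b]$. Choose $y_n\in\phi(x_n)$ with $Lx_n-\lambda_n Cx_n=-\eps_n y_n$; the compactness of $\overline{\phi(\overline U)}$ keeps $(y_n)$ bounded, so $\eps_n y_n\to 0$, and passing to the limit (using continuity of $L$, $C$) yields $Lx_0-\lambda_0 Cx_0=0$. If $\lambda_0\neq 0$, the choice of $b$ (Lemma \ref{signjump}) makes $L-\lambda_0 C$ invertible, forcing $x_0=0$, which contradicts $x_0\in\partial\Omega$ because $0\in\Omega$. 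Hence $\lambda_0=0$ and $x_0\in\ker L\cap\partial\Omega=\mathcal{S}_0$.

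Finally, since $(x_n,\eps_n,\lambda_n)\to(x_0,0,0)$ in $E\times\R\times\R$ and each triple is a non-trivial solution of \eqref{pev}, every neighborhood of $(x_0,0,0)$ contains non-trivial solutions, so $x_0$ is the desired bifurcation point. The real content (namely the existence of non-trivial solutions for every small $\eps\neq 0$) is already supplied by Proposition \ref{exi} and Theorem \ref{pervec}; the only genuine step here is ruling out accumulation at a non-zero eigenvalue $\lambda_0$, which is precisely what the invertibility of $L-\lambda C$ for $0<|\lambda|\le b$ delivers. This is the only step I expect any friction from, and it is already automatic once $b$ is chosen as above.
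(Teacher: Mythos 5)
Your proof is correct, but it takes a genuinely different route from the paper's. The paper argues \emph{by contradiction}: it assumes there is no bifurcation point, extracts from the (hypothetical) open neighborhoods $\mathcal U_x$ of the trivial solutions a finite subcover, shrinks to a uniform tube $B_c(\mathcal S_0)\times\mathcal R$ in which every solution is trivial, and then invokes Theorem \ref{pervec} to produce a non-trivial solution in that tube, a contradiction. You instead argue \emph{directly and constructively}: you take a null sequence $\eps_n\neq 0$, use Theorem \ref{pervec} to extract non-trivial solutions $(x_n,\eps_n,\lambda_n)\in\mathcal K$, then use compactness of $\mathcal K$ to pass to a subsequential limit $(x_0,0,\lambda_0)$, and finally rule out $\lambda_0\neq 0$ via the invertibility of $L-\lambda_0 C$ for $0<|\lambda_0|\le b$. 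The paper's contradiction avoids the limit-passage step entirely (it never needs to know $\lambda_0=0$), so it is a bit shorter; your version exhibits the bifurcation point as an actual subsequential limit, which is slightly more informative and is essentially the argument already carried out for Theorem \ref{bifodi} in Section~\ref{sec7}. One small stylistic note: when you write ``$x_0\in\partial\Omega$'' right after passing to the limit, the cheapest justification is simply that $\partial\Omega$ is closed and $x_n\in\partial\Omega$; you do not need to wait for $x_0\in\ker L$ to place $x_0$ on $\partial\Omega$.
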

\begin{proof}
We argue by contradiction: assume that $\mathcal{S}_0$ contains no bifurcation points, i.e., for all $x\in\mathcal{S}_0$ there exists an open neighborhood $\mathcal{U}_x\subset E\times\R\times\R$ of $(x,0,0)$, s.t.\ for all $(x,\eps,\lambda)\in\mathcal{S}\cap\mathcal{U}_x$ we have $(\eps,\lambda)=(0,0)$. The family $(\mathcal{U}_x)_{x\in\mathcal{S}_0}$ is an open covering of the compact set $\mathcal{S}_0\times\{(0,0)\}$ in $E\times\R\times\R$, so we can find a finite sub-covering, which we relabel as $(\mathcal{U}_i)_{i=1}^m$.
\vskip2pt
\noindent
Let $a,b,c>0$ be s.t.\
\[B_c(\mathcal{S}_0)\times\mathcal{R}\subset\bigcup_{i=1}^m\mathcal{U}_i,\]
where as usual $\mathcal{R}=[-a,a]\times[-b,b]$. Thus, we have
\[\mathcal{S}\cap\big(B_c(\mathcal{S}_0)\times\mathcal{R}\big)=\mathcal{S}_0\times\{(0,0)\}\]
(i.e., there are no solutions in $B_c(\mathcal{S}_0)\times\mathcal{R}$ except the trivial ones). By reducing $a,b,c>0$ if necessary, Theorem \ref{pervec} applies. So, for all $\eps\in[-a,a]\setminus\{0\}$ there exist $x\in\partial\Omega\cap B_c(\mathcal{S}_0)$, $\lambda\in[-b,b]$ s.t.\ $(x,\eps,\lambda)\in\mathcal{S}$, a contradiction.
\end{proof}

\begin{remark}
\label{norma-omega}
Since ${\rm ker}\,L$ has finite dimension, compactness of $\Omega_0$ (which is assumed in the statements of the last theorems) is clearly verified as long as $\Omega$ is bounded. On the other hand, trivial examples in Euclidean spaces show that, if $\Omega$ is unbounded, then $\Omega_0$ may fail to be compact. We want to present a special type of (possibly unbounded) domains which satisfy our assumption: let $\gamma:E\to\R$ be a continuous norm and set
\[\Omega=\big\{x\in E:\,\gamma(x)<1\big\}.\]
Let $(x_n)$ be a sequence in $\Omega_0=\overline\Omega\cap{\rm ker}\,L$. Without loss of generality we may assume $x_n\neq 0$ for all $n\in\N$. Setting $y_n=x_n/\|x_n\|$, we define a bounded sequence $(y_n)$ in the finite-dimensional space ${\rm ker}\,L$, so passing to a subsequence if necessary we have $y_n\to y$, $\|y\|=1$. By continuity, $\gamma(y_n)\to\gamma(y)>0$, so
\[\|x_n\|=\frac{\gamma(x_n)}{\gamma(y_n)}\le\frac{1}{\gamma(y_n)}\]
is bounded. Passing to a further subsequence, we have $\|x_n\|\to\mu\ge 0$, and hence $x_n\to\mu y$. So, $\Omega_0$ is compact.
\end{remark}

\noindent
We conclude this section by presenting a special case of Theorem \ref{bif}:

\begin{corollary}\label{sphere}
Let ${\rm dim}({\rm ker}\,L)$ be odd, \eqref{trans} hold, $\gamma\in C(E,\R)$ be a norm, and
\[\Omega=\big\{x\in E:\,\gamma(x)<1\big\}.\]
Then, problem \eqref{pev} has at least one bifurcation point.
\end{corollary}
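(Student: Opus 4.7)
The plan is to verify that the hypotheses of Theorem \ref{bif} are satisfied in the specific setting of the corollary, and then invoke that theorem directly. The only nontrivial assumption to check is the compactness of $\Omega_0 = \overline{\Omega}\cap{\rm ker}\,L$, since the oddness of $\dim({\rm ker}\,L)$ and transversality \eqref{trans} are already hypotheses here.

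First I would observe that $\Omega$ is open in $E$ because $\gamma$ is continuous, and that $0\in\Omega$ since $\gamma(0)=0<1$, so the general setup of Section \ref{sec6} applies. Next I would verify compactness of $\Omega_0$. This is precisely the content of Remark \ref{norma-omega}: taking any sequence $(x_n)$ in $\Omega_0$ (which without loss of generality is nonzero), one forms $y_n = x_n/\|x_n\|$ in the unit sphere of the finite-dimensional space ${\rm ker}\,L$, passes to a subsequence $y_n\to y$ with $\|y\|=1$, then uses continuity of $\gamma$ together with the bound $\gamma(x_n)\le 1$ to obtain $\|x_n\|\le 1/\gamma(y_n)$, bounded. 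A further subsequence yields $x_n\to\mu y$ for some $\mu\ge 0$, and since $\overline\Omega$ is closed the limit lies in $\Omega_0$, giving compactness.

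With $\Omega_0$ compact, Theorem \ref{bif} delivers at least one bifurcation point for \eqref{pev}, which is the conclusion. The main obstacle, conceptually, is the compactness step, but since the corollary is essentially a codification of Remark \ref{norma-omega} combined with Theorem \ref{bif}, the proof reduces to an explicit citation of those two results and the observation that the hypotheses line up. No additional ingredients beyond what has already been developed are needed.
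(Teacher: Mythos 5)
Your proposal is correct and follows exactly the paper's own argument: verify that $\Omega$ is open with $0\in\Omega$, invoke Remark \ref{norma-omega} to obtain compactness of $\Omega_0=\overline\Omega\cap{\rm ker}\,L$, and then apply Theorem \ref{bif}. Spelling out the compactness argument from Remark \ref{norma-omega} is a fine, if slightly redundant, addition.
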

\begin{proof}
Clearly $\Omega\subset E$ is an open set s.t.\ $0\in\Omega$ and $\partial\Omega=\gamma^{-1}(1)$. Moreover, by Remark \ref{norma-omega}, the set $\Omega_0=\overline\Omega\cap{\rm ker}\,L$ is compact. Thus, we can apply Theorem \ref{bif} and conclude.
\end{proof}

\section{Examples and applications}\label{sec7}

\noindent
We devote this final section to an application of our abstract results in the field of differential inclusions. We consider problem \eqref{odi} stated in the Introduction. We recall that $\Phi(u):[0,1]\to 2^\R$ is a set-valued mapping depending on $u$, to be defined later, while $\eps,\lambda\in\R$ are parameters and $\|\cdot\|_1$ is the usual $L^1$-norm on $[0,1]$. Problem \eqref{odi} falls into the general pattern \eqref{pev}, with the following definitions. Set
\[E=\big\{u\in C^2([0,1],\R):\,u'(0)=u'(1)=0\big\}, \ F=C^0([0,1],\R),\]
endowed with the usual norms. Then $E$, $F$ are real Banach spaces, in particular $E$ is a 2-codimensional subspace of $C^2([0,1],\R)$. Moreover, set for all $u\in E$
\[Lu=u''+u', \ Cu=u.\]
Then, $L,C\in\mathcal{L}(E,F)$. Moreover, $L\in\Phi_0(E,F)$ as the composition of the embedding $E\hookrightarrow C^2([0,1],\R)$ (which is Fredholm of index $-2$) and the linear differential operator $u\mapsto u''+u'$ (which is Fredholm of index 2 between $C^2([0,1],\R)$ and $F$). In order to check the transversality condition \eqref{trans}, we need more detailed information about $L$. It is easily seen that ${\rm ker}\,L$ is the space of constant functions, i.e.,
\[{\rm ker}\,L=\R,\]
in particular ${\rm dim}({\rm ker}\,L)=1$ (odd). Moreover, we have
\[{\rm im}\,L=\Big\{f\in F:\,\int_0^1f(t)e^t\,dt=0\Big\}.\]
Indeed, for all $f\in{\rm im}\,L$ there exists $u\in E$ s.t.\ $u''+u'=f$, so integrating by parts we deduce
\[\int_0^1f(t)e^t\,dt=\int_0^1u''(t)e^t\,dt+\int_0^1u'(t)e^t\,dt=0.\]
Besides, since $L\in\Phi_0(E,F)$, we have ${\rm dim}({\rm coker}\,L)=1$ (Definition \ref{fredop}), so the condition above is also sufficient. Now we prove \eqref{trans}, or equivalently
\[{\rm im}\,L\oplus C({\rm ker}\,L)=F.\]
Indeed, $C({\rm ker}\,L)=\R$ is not contained in the 1-codimensional subspace ${\rm im}\,L$, hence it is a (direct) complement for it in $F$.
\vskip2pt
\noindent
The integral constraint rephrases as $u\in\partial\Omega$, where we have set
\[\Omega=\big\{u\in E:\,\|u\|_1<1\big\}.\]
Since $\|\cdot\|_1$ is a continuous norm on $E$, $\Omega$ is an (unbounded) open set s.t.\ $\Omega_0=\overline\Omega\cap{\rm ker}\,L$ is compact (Remark \ref{norma-omega}). Moreover, from the characterization of ${\rm ker}\,L$ we have $\mathcal{S}_0=\{\pm 1\}$.
\vskip2pt
\noindent
The construction of $\Phi$ requires some care. We are going to consider a set-valued map $\phi\in CJ(\overline\Omega,F)$, and then set for all $u\in\overline\Omega$, $t\in[0,1]$
\[\Phi(u)(t)=\big\{w(t):\,w\in\phi(u)\big\}\]
(this can be seen as a set-valued superposition operator). Details will be given in Examples \ref{gasinski}, \ref{benedetti}, and \ref{zecca} below. We can now apply our abstract results to prove existence of a bifurcation point:

\begin{theorem}\label{bifodi}
Let $E$, $F$, $L$, $C$, $\Omega$, and $\Phi$ be as above, being $\phi\in CJ(\overline\Omega,F)$ locally compact. Then, there exist sequences $(u_n)$ in $\partial\Omega$, $(\eps_n)$ in $\R\setminus\{0\}$, $(\lambda_n)$ in $\R$ s.t.\ $(u_n,\eps_n,\lambda_n)$ is a solution of \eqref{odi} for all $n\in\N$, and
\[u_n\to\pm 1, \ \eps_n\to 0, \ \lambda_n\to 0.\]
\end{theorem}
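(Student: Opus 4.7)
The plan is to apply Corollary \ref{sphere} directly. Before invoking it, I would collect the verifications already carried out explicitly in the excerpt preceding the statement: $L\in\Phi_0(E,F)$, $\dim({\rm ker}\,L)=1$ is odd, the transversality condition \eqref{trans} holds (since $C({\rm ker}\,L)=\R$ is a direct complement to the codimension-one subspace ${\rm im}\,L$), $\Omega$ is the open unit ball of the continuous norm $\|\cdot\|_1$ on $E$, and $\phi$ is a locally compact element of $CJ(\overline\Omega,F)$ by hypothesis. Hence all standing assumptions of Section \ref{sec6} are met, and Corollary \ref{sphere} yields at least one bifurcation point $u_0\in\mathcal{S}_0=\{\pm 1\}$.

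By Definition \ref{sol}, every neighborhood of $(u_0,0,0)$ in $E\times\R\times\R$ contains a nontrivial solution of \eqref{pev}; under the identification recorded just before the theorem, such a solution is precisely a triple $(u,\eps,\lambda)$ solving \eqref{odi} (here $Lu-\lambda Cu+\eps\phi(u)\ni 0$ in $E$ translates, via the superposition definition of $\Phi$, into the pointwise differential inclusion on $[0,1]$, while the Neumann conditions are built into $E$ and the constraint $\|u\|_1=1$ is $u\in\partial\Omega$). I would therefore fix the decreasing family $\mathcal{V}_n=\{u\in E:\|u-u_0\|<1/n\}\times(-1/n,1/n)\times(-1/n,1/n)$, and for each $n\in\N$ pick $(u_n,\eps_n,\lambda_n)\in\mathcal{S}\cap\mathcal{V}_n$ with $(\eps_n,\lambda_n)\neq(0,0)$. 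Then automatically $u_n\to u_0\in\{\pm 1\}$, $\eps_n\to 0$ and $\lambda_n\to 0$.

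Finally, I would upgrade the condition $(\eps_n,\lambda_n)\neq(0,0)$ to $\eps_n\neq 0$ using the observation recorded at the start of Section \ref{sec6}: because $0\notin\partial\Omega$ and $L-\lambda C$ is an isomorphism for every $\lambda\in[-b,b]\setminus\{0\}$ (with $b>0$ provided by \eqref{trans} and Lemma \ref{signjump}), any solution of \eqref{pev} sufficiently close to $(u_0,0,0)$ must satisfy $\eps\neq 0$; concretely, if $\eps=0$ and $0<|\lambda|\le b$ then $Lu=\lambda Cu$ forces $u=0\notin\partial\Omega$. Taking $n$ large enough that $\mathcal{V}_n$ lies inside such a distinguished neighborhood and discarding finitely many initial terms delivers $\eps_n\in\R\setminus\{0\}$ for every $n$. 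I do not expect any serious obstacle: the theorem is essentially a packaging of Corollary \ref{sphere}, the only care needed being the elementary verification of the algebraic structure of $L$ and $C$ (already done in the excerpt) and the pointwise translation between the abstract inclusion \eqref{pev} and the concrete problem \eqref{odi}.
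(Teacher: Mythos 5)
Your proposal is correct and mirrors the paper's own argument: both invoke Corollary \ref{sphere} (equivalently, Remark \ref{norma-omega} together with Theorem \ref{bif}) to produce a bifurcation point in $\mathcal{S}_0=\{\pm1\}$, extract a sequence of non-trivial solutions converging to $(\pm1,0,0)$ with $\eps_n\neq0$, and translate the abstract inclusion \eqref{pev} into the pointwise differential inclusion \eqref{odi} via the definition of $\Phi$. Your extra justification that $\eps_n\neq0$ (using invertibility of $L-\lambda C$ for $0<|\lambda|\le b$ and $0\notin\partial\Omega$) simply makes explicit a remark the paper had already recorded at the start of Section \ref{sec6}.
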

\begin{proof}
By Remark \ref{norma-omega} and Corollary \ref{bif}, problem \eqref{pev} has at least one bifurcation point in $\mathcal{S}_0$, that is, either the constant $1$ or $-1$. So, we can find a sequence $(u_n,\eps_n,\lambda_n)$ if non-trivial solutions of \eqref{pev} (more precisely, with $\eps_n\neq 0$) converging to either $(1,0,0)$ or $(-1,0,0)$ in $E\times\R\times\R$. By the definition of $\Phi$, for all $n\in\N$ and all $t\in[0,1]$ we have
\[u''_n(t)+u'_n(t)-\lambda_n u_n(t)+\eps_n\Phi(u_n)(t)\ni 0,\]
so $(u_n,\eps_n,\lambda_n)$ solves \eqref{odi}.
\end{proof}

\noindent
We present three examples of locally compact $CJ$-maps $\phi:\overline\Omega\to 2^F$. The first and second examples are quite easy, $\phi$ being defined by means of finite-dimensional reduction.

\begin{example}\label{gasinski}
We define a set-valued map $\phi:\overline\Omega\to 2^F$ whose values consist of piecewise affine functions along a decomposition of $[0,1]$, satisfying some bounds at the nodal points. Fix $m\in\N$, points $0=t_0<t_1<\ldots<t_m=1$, and $\rho\in(0,1)$, then for all $u\in\overline\Omega$ set
\[\phi(u)=\big\{w\in F:\,\text{$w$ is affine in $[t_{j-1},t_j]$, $j=1,\ldots m$ and $u(t_j)-\rho\le w(t_j)\le u(t_j)+\rho$, $j=0,\ldots m$}\big\}.\]
We first prove that $\phi$ has convex values. For any $u\in\overline\Omega$, $w_0,w_1\in\phi(u)$, and $\mu\in[0,1]$ the function $w=(1-\mu)w_0+\mu w_1$ is affine in any interval $[t_{j-1},t_j]$ ($j=1,\ldots m$), and clearly
\[u(t_j)-\rho\le w(t_j)\le u(t_j)+\rho \ (j=0,\ldots m).\]
Then we prove that $\phi$ has compact values. Let $u\in\overline\Omega$, $(w_n)$ be a sequence in $\phi(u)$. Then we can find $\alpha_1,\ldots\alpha_m>0$ s.t.\ $|w'_n(t)|\le\alpha_j$ for all $t\in(t_{j-1},t_j)$, $j=1,\ldots m$, and $n\in\N$. So the sequence $(w_n)$ is uniformly bounded and equi-continuous, hence by Ascoli's theorem we can pass to a subsequence s.t.\ $w_n\to w$ in $F$. Due to uniform convergence, $w$ is piecewise affine and satisfies the bounds at $t_j$ ($j=0,\ldots m$), so $w\in\phi(u)$. Thus, $\phi(u)$ is compact.
\vskip2pt
\noindent
Moreover, the set-valued map $\phi$ is locally compact. Indeed, let $(u_n)$ be a bounded sequence in $\overline\Omega$ and $(w_n)$ be a sequence in $F$, s.t.\ $w_n\in\phi(u_n)$ for all $n\in\N$. Recalling that $(u'_n)$ is uniformly bounded, we can argue as above to find relabeled subsequence $w_n$ s.t.\ $w_n\to w$ in $F$. Thus, $\overline{\phi(u_n)}$ is compact.
\vskip2pt
\noindent
We prove finally that ${\rm graph}\,\phi$ is closed in $\overline\Omega\times F$. Indeed, let $(u_n,w_n)$ be a sequence in $\overline\Omega\times F$ s.t.\ $w_n\in\phi(u_n)$ for all $n\in\N$, and $(u_n,w_n)\to(u,w)$. Then, $w\in\phi(u)$. By \cite[Proposition 4.15]{G}, $\phi$ is u.s.c. We conclude that $\phi\in CJ(\overline\Omega,F)$ and is locally compact.
\end{example}

\begin{example}\label{benedetti}
In this second example, $\phi(u)$ depends on $u$ in a single-valued sense, but is multiplied by an interval depending on the mean value of $u$ (non-local dependence). Fix $f\in C^0(\R,\R)$, $\alpha,\beta:\R\to\R$ s.t.\ $\alpha$ is lower semi-continuous, $\beta$ is upper semi-continuous, and $\alpha(s)\le\beta(s)$ for all $s\in\R$. For all $u\in\overline\Omega$ set
\[\phi(u)=f(u)[\alpha(\bar u),\beta(\bar u)], \ \bar u=\int_0^1 u(\tau)\,d\tau.\]
Obviously, $\phi:\overline\Omega\to 2^F$ has convex values. We prove that $\phi$ has compact values. Let $u\in\overline\Omega$, $(w_n)$ be a sequence in $\phi(u)$. Then, for all $n\in\N$ there exists $c_n\in[\alpha(\bar u),\beta(\bar u)]$ s.t.\ $w_n=c_n f(u)$. The sequence $(c_n)$ is bounded, so passing to a subsequence we have $c_n\to c$ for some $c\in\R$. Set $w=cf(u)$, then clearly $w_n\to w$ in $F$ and $w\in\phi(u)$. Thus, $\phi(u)$ is compact.
\vskip2pt
\noindent
The map $\phi$ is locally compact. Indeed, let $(u_n)$ be a bounded sequence in $\overline\Omega$ and $(w_n)$ be a sequence in $F$, s.t.\ $w_n\in\phi(u_n)$ for all $n\in\N$. Then for all $n\in\N$ we can find $c_n\in[\alpha(\bar u_n),\beta(\bar u_n)]$ s.t.\ $w_n=c_nf(u_n)$. Since $(u_n)$ is uniformly bounded and equi-continuous, passing to a subsequence we have $u_n\to u$ uniformly in $[0,1]$ (note that $u\notin E$ in general). Hence, $\bar u_n\to\bar u$. So $(c_n)$ turns out to be bounded, and up to a subsequence $c_n\to c$. Passing to the limit, due to the properties of $\alpha$ and $\beta$, we have
\[\alpha(\bar u)\le c\le\beta(\bar u).\]
So, setting $w=cf(u)$, we deduce $w_n\to w$ in $F$. Thus, $\overline{\phi(u_n)}$ is compact.
\vskip2pt
\noindent
We see now that ${\rm graph}\,\phi$ is closed in $\overline\Omega\times F$. Indeed, let $(u_n,w_n)$ be a sequence in $\overline\Omega\times F$ s.t.\ $w_n\in\phi(u_n)$ for all $n\in\N$, and $(u_n,w_n)\to(u,w)$. For all $n\in\N$ we find $c_n\in[\alpha(\bar u_n),\beta(\bar u_n)]$ s.t.\ $w_n=c_nf(u_n)$. Then, $\bar u_n\to\bar u$, and $f(u_n)\to f(u)$ uniformly in $[0,1]$. We prove now that $(c_n)$ converges, indeed avoiding trivial cases we may assume that $f(u(t))\neq 0$ at some $t\in[0,1]$, then
\[\lim_n c_n=\frac{w_n(t)}{f(u_n(t))}=\frac{w(t)}{f(u(t))}=c,\]
with $c\in[\alpha(\bar u),\beta(\bar u)]$. So $w\in\phi(u)$. By \cite[Proposition 4.15]{G} again, $\phi$ is u.s.c. We conclude that $\phi\in CJ(\overline\Omega,F)$ and is locally compact.
\end{example}

\noindent
The last example is more sophisticated, since in the construction of $\phi$ we preserve the infinite dimension, and we apply some classical results from functional analysis to prove all required compactness properties. We recall such results, starting from a weak notion of compactness in $L^1$ (see \cite[Definition 2.94]{GHO}, \cite[Definition 4.2.1]{KOZ}):

\begin{definition}\label{semicompact}
A sequence $(v_n)$ in $L^1([0,1],\R)$ is said to be semicompact, if
\begin{enumroman}
\item\label{semicompact1} it is integrably bounded, i.e., if there exists $g\in L^1(0,1)$ s.t.\ $|v_n(t)|\le g(t)$ for a.e.\ $t\in[0,1]$ and all $n\in\N$;
\item\label{semicompact2} the image sequence $(v_n(t))$ is relatively compact in $\R$ for a.e.\ $t \in [0,1]$.
\end{enumroman}
\end{definition}

\noindent
The following result follows from the Dunford-Pettis Theorem (see also \cite[Proposition 4.21]{KOZ}):

\begin{proposition}\label{semiweak}
Every semicompact sequence is weakly compact in $L^1(0,1)$.
\end{proposition}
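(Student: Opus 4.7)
The plan is to reduce the claim to the Dunford--Pettis characterization of relatively weakly compact subsets of $L^1(0,1)$, which states that a bounded subset $K \subset L^1(0,1)$ is relatively weakly compact if and only if it is uniformly integrable, i.e., for every $\eta>0$ there exists $\delta>0$ such that for every measurable $A \subseteq [0,1]$ with Lebesgue measure $|A|<\delta$ one has $\int_A |v|\,dt < \eta$ for all $v\in K$. Once this is in place, the conclusion that $(v_n)$ is weakly compact (in the sequential sense) will follow by the Eberlein--\v{S}mulian theorem.

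First I would verify boundedness. Let $g\in L^1(0,1)$ be the integrable majorant from condition \ref{semicompact1}, so that $|v_n(t)|\le g(t)$ for a.e.\ $t\in[0,1]$ and every $n\in\N$. Integrating this pointwise bound yields
\[
\|v_n\|_{L^1}=\int_0^1 |v_n(t)|\,dt \le \int_0^1 g(t)\,dt = \|g\|_{L^1},
\]
so $(v_n)$ is bounded in $L^1(0,1)$.

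Next I would verify uniform integrability from the same majorant. Given $\eta>0$, absolute continuity of the Lebesgue integral applied to $g\in L^1(0,1)$ produces $\delta>0$ such that $\int_A g\,dt<\eta$ whenever $|A|<\delta$. For any such $A$ and any $n\in\N$,
\[
\int_A |v_n(t)|\,dt \le \int_A g(t)\,dt < \eta,
\]
which is exactly uniform integrability of the family $\{v_n:n\in\N\}$. By the Dunford--Pettis theorem, this family is relatively weakly compact in $L^1(0,1)$, and by the Eberlein--\v{S}mulian theorem every sequence in it admits a weakly convergent subsequence, which is the content of the proposition.

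The main (and essentially only) point worth noting is that condition \ref{semicompact2} of Definition \ref{semicompact} on pointwise relative compactness plays no role in this argument; the integrable bound \ref{semicompact1} alone suffices for the weak compactness conclusion, with condition \ref{semicompact2} presumably being exploited elsewhere to upgrade weak convergence to a more useful mode of convergence (for instance, in conjunction with a compact linear operator to obtain strong convergence of images).
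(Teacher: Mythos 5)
Your argument is correct and follows exactly the route the paper indicates: the paper itself gives no proof, merely remarking that the statement follows from the Dunford--Pettis theorem (with a pointer to \cite[Proposition 4.21]{KOZ}), and your write-up is precisely the expected filling-in of that citation, deriving uniform integrability from the integrable majorant and invoking Eberlein--\v{S}mulian for the sequential form. Your closing observation that condition \ref{semicompact2} of Definition \ref{semicompact} is not needed for this particular proposition is also accurate.
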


\noindent
We also recall the well known Mazur's theorem (see e.g.\ \cite{ET}):

\begin{theorem}\label{mazur}
Let $E$ be a normed space, $(x_n)$ be a sequence in $E$ weakly converging to $x$. Then, there exists a sequence of convex linear combinations
\[y_n=\sum_{k=1}^n a_{n,k}x_k, \ a_{n,k}\in(0,1]\]
s.t.\ $y_n\to x$ (strongly) in $E$.
\end{theorem}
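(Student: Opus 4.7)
The plan is to use the geometric Hahn--Banach separation theorem to identify the norm and weak closures of convex sets, and then match the prescribed indexing by a small perturbation argument.

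First I would recall the standard consequence of Hahn--Banach: for every convex subset $C \subseteq E$, the norm-closure $\overline{C}^{\|\cdot\|}$ and the weak-closure $\overline{C}^{w}$ coincide. Indeed, if $x_0 \notin \overline{C}^{\|\cdot\|}$, then $\overline{C}^{\|\cdot\|}$ is closed, convex, and disjoint from some open ball $B_r(x_0)$; geometric Hahn--Banach produces $\varphi \in E^{*}$ strictly separating them, which also witnesses $x_0 \notin \overline{C}^{w}$. Applying this with $C_m := \mathrm{conv}\{x_k : k \geq m\}$, and noting that every tail $(x_k)_{k \geq m}$ still converges weakly to $x$, we obtain $x \in \overline{C_m}^{w} = \overline{C_m}^{\|\cdot\|}$ for every $m$. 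Hence there are finite convex combinations $z_m = \sum_{k \geq m} b_{m,k} x_k$ (with $b_{m,k} \geq 0$, $\sum_k b_{m,k} = 1$, only finitely many nonzero) satisfying $\|z_m - x\| < 1/m$.

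Let $N_m$ denote the largest index $k$ with $b_{m,k} > 0$; passing to a subsequence we may assume $N_1 < N_2 < \cdots$. Since weak convergence implies $\sup_k \|x_k\| =: M < \infty$ via the uniform boundedness principle, a slight convex perturbation
\[
\tilde z_m := (1 - \eta_m) z_m + \frac{\eta_m}{N_m} \sum_{k=1}^{N_m} x_k,
\]
with $\eta_m > 0$ small enough, produces a convex combination of $x_1, \ldots, x_{N_m}$ all of whose coefficients lie in $(0, 1]$ and satisfies $\|\tilde z_m - x\| < 2/m$.

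The last step, and the only mildly technical one, is bookkeeping to match the prescribed indexing $y_n = \sum_{k=1}^n a_{n,k} x_k$: set $y_{N_m} := \tilde z_m$; for $N_m < n < N_{m+1}$ define $y_n$ by redistributing a tiny positive weight off $\tilde z_m$ onto $x_{N_m+1}, \ldots, x_n$, small enough to keep $\|y_n - x\|$ of order $1/m$; for $n < N_1$ fill in arbitrarily, since convergence is a tail property. The substantive ingredient is the Hahn--Banach identification of weak and norm closures on convex sets; the rest is routine, made possible by the uniform bound on $(x_k)$.
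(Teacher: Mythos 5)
The paper states this as ``the well known Mazur's theorem'' and cites it to the literature (Ekeland--Temam) without giving a proof, so there is no in-paper argument to compare against; I assess your proposal on its own merits. Your argument is the standard textbook proof and it is correct. Hahn--Banach separation yields the coincidence of the norm closure and weak closure of a convex set; applied to the tail hulls $C_m=\mathrm{conv}\{x_k:k\ge m\}$ (each of which still has $x$ in its weak closure, since every tail of the sequence converges weakly to $x$) this gives norm-close convex combinations $z_m$ drawn from $\{x_k\}_{k\ge m}$. The perturbation toward the Ces\`aro average $\frac1{N_m}\sum_{k=1}^{N_m}x_k$ makes every coefficient strictly positive while keeping the error small, thanks to $\sup_k\|x_k\|<\infty$; and your derivation of that bound via the uniform boundedness principle applied on $E^*$ is correct and important, since $E^*$ is complete even when the normed space $E$ is not, which is exactly what makes the statement valid for a mere normed space as claimed.

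One detail deserves to be made explicit rather than left implicit: your passage to a subsequence with $N_1<N_2<\cdots$ is legitimate precisely because $z_m\in C_m$ forces the top index of its support to satisfy $N_m\ge m$, hence $N_m\to\infty$. Had you instead chosen $z_m$ from $\mathrm{conv}\{x_k:k\ge 1\}$, the $N_m$ could stay bounded and an extra (trivial but separate) finite-dimensional case would be needed. With the tail hulls this issue disappears, and the concluding bookkeeping to produce $y_n=\sum_{k=1}^n a_{n,k}x_k$ with $a_{n,k}\in(0,1]$ for every $n$ is routine and sound.
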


\noindent
We can now present our last example:

\begin{example}\label{zecca}
Let $\alpha,\beta:[0,1]\times\R\to\R$ be continuous functions s.t.\ $\alpha(t,s)\le\beta(t,s)$ for all $(t,s)\in[0,1]\times\R$, and for all $u\in\overline\Omega$ define $\phi(u)$ as the set of all functions $w\in F$ for which there exists $v\in L^1(0,1)$ s.t.\ for all $t\in[0,1]$
\[w(t)=\int_0^t v(\tau)\,d\tau,\]
and for a.e.\ $t\in[0,1]$
\[v(t)\in[\alpha(t,u(t)),\beta(t,u(t))].\]
Shortly, we might define $\phi(u)$ as a set-valued integral in the sense of Aumann
\[\phi(u)(t)=\int_0^t[\alpha(\tau,u(\tau)),\beta(\tau,u(\tau))]\,d\tau\]
(see \cite{BZ,KOZ}). Clearly, for all $u\in\overline\Omega$, any $w\in\phi(u)$ is absolutely continuous and hence a.e.\ differentiable in $[0,1]$ with derivative $v$. We first prove that $\phi$ has convex values. Let $u\in\overline\Omega$, $w_0,w_1\in\phi(u)$, and $\mu\in[0,1]$. There exist $v_0,v_1\in L^1(0,1)$ s.t.\
\[w_i(t)=\int_0^t v_i(\tau)\,d\tau, \ v_i(t)\in[\alpha(t,u(t)),\beta(t,u(t))] \ \text{(a.e.)}.\]
Set $w=(1-\mu)w_0+\mu w_1$ and $v=(1-\mu)v_0+\mu v_1$, then we have in $[0,1]$
\[w(t)=\int_0^t v(\tau)\,d\tau, \ v(t)\in[\alpha(t,u(t)),\beta(t,u(t))] \ \text{(a.e.)},\]
which implies $w\in\phi(u)$.
\vskip2pt
\noindent
We prove now that $\phi$ has compact values (this is not immediate and will require several steps). Let $u\in\overline\Omega$, $(w_n)$ be a sequence in $\phi(u)$, then there exists a sequence $(v_n)$ in $L^1(0,1)$ s.t.\ for all $n\in\N$
\[w_n(t)=\int_0^t v_n(\tau)\,d\tau, \ v_n(t)\in[\alpha(t,u(t)),\beta(t,u(t))] \ \text{(a.e.)}.\]
Clearly $(w_n)$ is bounded in $F$. Also, since $(v_n)$ is essentially bounded, $(w_n)$ turns out to be equi-absolutely continuous. By Ascoli's theorem, passing if necessary to a subsequence we have $w_n\to w$ in $F$ and $w$ is the primitive of some $v\in L^1(0,1)$, i.e., for all $t\in[0,1]$ we have
\beq\label{wv}
w(t)=\int_0^t v(\tau)\,d\tau.
\eeq
On the other side, $(v_n)$ is a semicompact sequence in $L^1(0,1)$ (Definition \ref{semicompact}), so by Proposition \ref{semiweak} we can pass to a further subsequence and have $(v_n)$ weakly converging in $L^1(0,1)$ to some $\hat v\in L^1(0,1)$. For all $t\in[0,1]$, the linear functional
\[f\mapsto\int_0^t f(\tau)\,d\tau\]
is bounded in $L^1(0,1)$, so weak convergence is enough to deduce that for all $t\in[0,1]$
\[w_n(t)=\int_0^t v_n(\tau)\,d\tau\to\int_0^t\hat v(\tau)\,d\tau.\]
Comparing to \eqref{wv}, we see that $v=\hat v$ in $L^1(0,1)$. So $(v_n)$ converges weakly to $v$ in $L^1(0,1)$. By Theorem \ref{mazur}, we can find a sequence $(v'_n)$ of convex linear combinations of $(v_n)$ s.t.\ $v'_n\to v$ in $L^1(0,1)$ (strongly). Clearly, for all $n\in\N$ and a.e.\ $t\in[0,1]$ we have
\[v'_n(t)\in[\alpha(t,u(t)),\beta(t,u(t))],\]
so we can pass to the limit and deduce the same property for $v$. So, by \eqref{wv}, we have $w\in\phi(u)$. Thus, $\phi(u)$ is compact. By similar arguments, we prove that $\phi$ is locally compact.
\vskip2pt
\noindent
Upper semicontinuity of $\phi$ can be proved as in the previous cases, applying \cite[Proposition 4.1.5]{G}. Nevertheless, in order to give the reader a more complete picture, we present a direct proof based on Definition \ref{usc}. Let $V\subset F$ be open, $\bar u\in\phi^+(V)$. We claim that there exists a neighborhood of $\bar u$ contained in $\phi^+(V)$. Indeed, by compactness of $\phi(\bar u)$, there exist $w_1,\ldots w_n\in\phi(\bar u)$ and $\eps_1,\ldots,\eps_n>0$ s.t.\
\begin{enumroman}
\item\label{zecca1} $B_{\eps_i}(w_i)\subseteq V$ ($i=1,\ldots n$);
\item\label{zecca2} $\phi(\bar u)\subset\cup_{i=1}^n B_{\eps_i/2}(w_i)$.
\end{enumroman}
Consider now the compact subset of $\R^2$
\[C=\big\{(t,y)\in\R^2:\,t\in[0,1],\,\alpha(t,\bar u(t))\le y\le\beta(t,\bar u(t))\big\},\]
and let $A$ be the open ball of $[0,1]\times\R$ centered at $C$ with radius $1$. Then set
\[\bar\eps:=\frac{\min\{\eps_1,\ldots\eps_n\}}{2}.\]
Since $\alpha$, $\beta$ are uniformly continuous in $\overline{A}$, we can find $\delta\in(0,1)$ s.t.\
\beq\label{unifcont}
\max\big\{|\alpha(t,y)-\alpha(t,z)|,\,|\beta(t,y)-\beta(t,z)|\big\}<\bar\eps \ \text{for all $(t,y),(t,z)\in\overline{A}$, ${\rm dist}((t,y),(t,z))<\delta$.}
\eeq
We claim that $\phi(B_\delta(\bar u))\subseteq V$. Indeed, fix $u\in B_\delta(\bar u)$. Clearly, we have $|u(t)-\bar u(t)|<\delta$ for all $t\in[0,1]$. Take now $w\in\phi(u)$, which can be written as
\[w(t)=\int_0^t v(\tau)\,d\tau,\]
with $v\in L^1(0,1)$ satisfying for a.e.\ $t\in[0,1]$
\[\alpha(t,u(t))\le v(t)\le\beta(t,u(t)).\]
If for a.e.\ $t\in[0,1]$
\[\alpha(t,\bar u(t))\le v(t)\le\beta(t,\bar u(t)),\]
then $w\in\phi(\bar u)\subseteq V$ and we are done. Otherwise, consider the truncated map $\bar v:[0,1]\to\R$ defined by
\[\bar v(t)=\begin{cases}
\alpha(t,\bar u(t)) & \text{if $v(t)<\alpha(t,\bar u(t))$} \\
v(t) & \text{if $\alpha(t,\bar u(t))\le v(t)\le\beta(t,\bar u(t))$} \\
\beta(t,\bar u(t)) & \text{if $v(t)>\beta(t,\bar u(t))$,}
\end{cases}\]
which is a $L^1$-function (see e.g.\ \cite{R}), and denote
\[\bar w(t)=\int_0^t\bar v(\tau)\,d\tau,\]
so $\bar w\in\phi(\bar u)$. By the bounds above we have for all $t\in[0,1]$ that $(t,u(t)),(t,\bar u(t))\in\overline{A}$ with
\[{\rm dist}((t,u(t)),(t,\bar u(t)))<\delta,\]
so by \eqref{unifcont} we have $|v(t)-\bar v(t)|<\bar\eps$ for a.e.\ $t\in[0,1]$. This in turn implies $\|w-\bar w\|_\infty<\bar\eps$. By \ref{zecca2}, we can find $i\in\{1,\ldots n\}$ s.t.\ $\bar w\in B_{\eps_i/2}(w_i)$. So, recalling the definition of $\bar\eps$, we have
\[\|w-w_i\|_\infty\le\|w-\bar w\|_\infty+\|\bar w-w_i\|_\infty <\eps_i,\]
hence by \ref{zecca1} $w\in V$. Thus, $\phi(B_\delta(\bar u))\subseteq V$ and $\phi$ turns out to be u.s.c. In conclusion, $\phi\in CJ(\overline\Omega,F)$ and it is locally compact.
\end{example}

\begin{remark}\label{pts}
Comparing Definition \ref{sol} and problem \eqref{odi}, one may be left in doubt that the non-trivial solutions ensured by Theorem \ref{bifodi} might be triples $(\pm 1,\eps,0)$ with $\eps\neq 0$ (quite trivial in fact). But this case may only occur if $0\in\phi(\pm1)$. Easy computations show that in Example \ref{gasinski} we have $0\notin\phi(\pm 1)$, due to the choice $\rho\in(0,1)$. Similarly, in Example \ref{benedetti} it is enough to choose functions $f$, $\alpha$, and $\beta$ to be positive in order to have $0\notin\phi(\pm 1)$, thus avoiding such difficulty. Also in Example \ref{zecca}, we can easily find $\alpha$, $\beta$ s.t.\ $0\notin\phi(\pm 1)$.
\end{remark}

\vskip4pt
\noindent
{\small {\bf Acknowledgement.} This paper has gone through a long process before being concluded, and several people deserve to be acknowledged for their contributions: we thank Leszek Gasi\'{n}ski for help on Example \ref{gasinski}, Irene Benedetti for Example \ref{benedetti}, Pietro Zecca and Valeri Obukhovskii for Example \ref{zecca}, and Sunra Mosconi for the figure. We also thank Oswaldo Rio Branco de Oliveira for fruitful discussions. A.\ Iannizzotto is member of GNAMPA (Gruppo Nazionale per l'Analisi Matematica, la Probabilit\`a e le loro Applicazioni) of INdAM (Istituto Nazionale di Alta Matematica 'Francesco Severi') and is partially supported by the research project {\em Integro-differential Equations and Non-Local Problems}, funded by Fondazione di Sardegna (2017).}

\end{document}